\documentclass[leqno,12pt]{amsart}

\usepackage{amsmath,amstext,amssymb,amsopn,amsthm,mathrsfs}
\usepackage{bbm}
\usepackage{verbatim}
\usepackage{enumerate}  
\usepackage{enumitem}
\usepackage{color}

\newcommand*\RR{\mathbb{R}}

\newcommand*\NN{\mathbb{N}}
\newcommand*\ZZ{\mathbb{Z}}

\newcommand*\ven{\vert n\vert}
\newcommand*\al{\alpha}
\newcommand*\be{\beta}
\newcommand*\bal{\bar{\alpha}}
\newcommand*\bbe{\bar{\beta}}
\newcommand*\te{\theta}
\newcommand*\va{\varphi}

\newcommand*\fun{\varphi_n}
\newcommand*\funk{\varphi_k}
\newcommand*\funi{\varphi_{n_i}}
\newcommand*\Lfun{\mathcal{L}_n^\al}
\newcommand*\Lfuni{\mathcal{L}^{\alpha_i}_{n_i}}
\newcommand*\Lfunk{\mathcal{L}^{\alpha}_{k}}

\newcommand*\hfun{\varphi_n^\al}
\newcommand*\hfuni{\varphi^{\alpha_i}_{n_i}}
\newcommand*\hfunk{\varphi^{\alpha}_{k}}
\newcommand*\hen{h_n^\lambda}

\newcommand*\jfun{\phi_n^{\al,\be}}
\newcommand*\jfunk{\phi_k^{\al,\be}}
\newcommand*\jfuni{\phi_{n_i}^{\al_i,\be_i}}

\newcommand*\jpolk{P_k^{\al,\be}}

\newcommand*\jtpolk{\mathcal{P}_k^{\al,\be}}

\newcommand*\Rop{R_r^{\alpha}}
\newcommand*\Ropi{R_r^{\alpha_i}}
\newcommand*\jRop{R_r^{\al,\be}}

\newcommand*\htRop{\tilde{R}_r^{\lambda}}

\newcommand*\ve{\varepsilon}

\title[Hardy's inequality in $H^p$]
{Sharp Hardy's inequality for orthogonal expansions in~$H^p$~spaces}

\author[P{.} Plewa]{Pawe\l{} Plewa}
\address{Pawe\l{} Plewa \newline
			Faculty of Pure and Applied Mathematics,\newline
      Wroc\l{}aw University of Science and Technology       \newline
      Wyb{.} Wyspia\'nskiego 27,
      50--370 Wroc\l{}aw, Poland      
      }
\email{pawel.plewa@pwr.edu.pl}

\allowdisplaybreaks

\pagestyle{headings}

\theoremstyle{plain}
\newtheorem{thm}{Theorem}[section]

\newtheorem{lm}[thm]{Lemma}
\newtheorem{prop}[thm]{Proposition}

\theoremstyle{definition}

\theoremstyle{remark}
\newtheorem*{rem*}{Remark}
\newtheorem{rem}[thm]{Remark}

\numberwithin{equation}{section}

\setlength{\textheight}{23cm}
\setlength{\textwidth}{15.9cm}
\setlength{\oddsidemargin}{0cm}
\setlength{\evensidemargin}{0cm}
\setlength{\topmargin}{0cm}

\theoremstyle{plain}

\usepackage[dvipsnames]{xcolor}


\newcounter{comcount}

\begin{document}
\begin{abstract}
Hardy's inequality on $H^p$ spaces, $p\in(0,1]$, in the context of orthogonal expansions is investigated for general basis on a subset of $\RR^d$ with Lebesgue measure. The obtained result is applied to various Hermite, Laguerre, and Jacobi expansions. For that purpose some delicate estimates of the higher order derivatives for the underlying functions and of the associated heat kernels are proved. Moreover, sharpness of studied Hardy's inequalities is justified by a construction of an explicit counterexample, which is adjusted to all considered settings.
\end{abstract}

\maketitle
\footnotetext{
\emph{2010 Mathematics Subject Classification:} Primary: 42C10; Secondary: 42B30, 42B05, 33C45.\\
\emph{Key words and phrases:} Hardy's inequality, Hardy spaces, Laguerre expansions, Hermite expansions, Jacobi expansions. \\
The paper is a part of author's doctoral thesis written under the supervision of Professor Krzysztof Stempak.
}

\section{Introduction}
The classical Hardy inequality (see \cite{Hardy&Littlewood_1927_MA}) for Fourier coefficients states that
\begin{equation}\label{eq:classical_Hardy_inequ}
\sum_{k\in\ZZ} \frac{|\hat{f}(k)|}{|k|+1}\lesssim \Vert f\Vert_{{\rm Re}\, H^1}, 
\end{equation}
where ${\rm Re}\, H^1$ is the real Hardy space, where belong the real parts of functions in the Hardy space $H^1(\mathbb{D})$. Here $\mathbb{D}$ denotes the unit disk in the plane. Analogues of \eqref{eq:classical_Hardy_inequ} were considered by Kanjin \cite{Kanjin_1997_BLMS}, and $\hat{f}(k)$ were replaced by the expansion coefficients in two orthonormal bases: the Hermite and standard Laguerre function systems. In general, such inequalities are of the form
\begin{equation}\label{eq:13}
\sum_{k\in\NN} \frac{|\langle f,\funk\rangle|}{(k+1)^E}\lesssim \Vert f\Vert_{H^1},
\end{equation}
where $\funk$ is an orthonormal basis, $\langle\cdot,\cdot\rangle$ denotes the inner product in the associated space $L^2$, $H^1$ is an appropriate Hardy space, and $E$ is a positive number which we refer to as the admissible exponent. The difficulty in establishing versions of \eqref{eq:13} is twofold. Firstly, given an orthonormal basis one can ask if such inequality holds for a certain $E$. Secondly, there is a question about sharpness of the admissible exponent. We say that $E$ is sharp if it is the smallest for which \eqref{eq:13} holds. Moreover, some generalization of \eqref{eq:13} are possible, such as replacing $H^1$ by $H^p$, $p\in(0,1]$, or considering the multi-dimensional situation.

In the last two decades many authors were interested in various Hardy's inequalities. As mentioned above, Kanjin initiated the study of version of \eqref{eq:13} for the Hermite functions (he obtained $E=29/36$) and the standard Laguerre functions ($E=1$). For the latter system Satake \cite{Satake_2000_JMSJ} generalized this result for $p\in(0,1)$ with $E=2-p$, and for the former expansions Radha \cite{Radha_2000_TJM} extended investigated the multi-dimensional situation $d\geq 1$ with $E>(17d+12)/(24+12d)$. Few years later Radha and Thangavelu \cite{Radha_Thangavelu_2004_PAMS} proved Hardy's inequality associated with Hermite expansions for $d\geq 2$ and $p\in(0,1]$ with the admissible exponent $E=3d(2-p)/4$. The lacking case $d=1$ was partially covered by Balasubramanian and Radha \cite{Balasubramanian_Radha_2005_JIPAA}, but the exponent was strictly larger than the expected value $3(2-p)/4$ (see also Kanjin \cite{Kanjin_2011_JMSJ}). The inequality with this admissible exponent was proved ten years later by Z. Li, Y. Yu, and Y. Shi \cite{LiYuShi_2015_JFAA}. Moreover, the Jacobi trigonometric function expansions were studied by Kanjin and K. Sato \cite{Kanjin_Sato_2001_BLMS(Jacobi_Paley),Kanjin_Sato_2004_MIA(Jacobi_Hardy)}. There are also some other paper concerning various Hardy's inequalities in the context of orthogonal expansions, see for instance \cite{Ciaurri&Roncal&Thangavelu_2018_PEMS, LiShi_2014_CA,Shi&Li_2016_JMSJ_2, Shi&Li_2016_JMSJ}.

The author have already written a few articles in this topic. In \cite{Plewa_2019_JFAA} the system of Laguerre functions of Hermite type was studied. Secondly, in \cite{Plewa_2018_sharpLaguerre_arxiv} a general multi-dimensional method of proving Hardy's inequalities was introduced. It consists in estimating kernels of certain family of operators closely related to the associated heat semigroup. The method was applied to two Laguerre systems: standard and of convolution type. We stress that in the latter the underlying measure is not Lebesgue measure. Furthermore, in the same paper sharpness of obtained admissible exponents was proved. Up to our knowledge, it was the first explicit construction of such counterexamples known in the associated literature. On the other hand, the long study of Hardy's inequality for Hermite expansions were concluded by the author \cite{Plewa_2020_TJM}, where he justifies that the know exponent $E=3d/4$ (for $p=1$) was sharp. Finally, four Jacobi systems were also investigated, see \cite{Plewa_2019_JAT}.

In this paper we prove Hardy's inequalities in the framework of various orthogonal function systems including generalized Hermite, standard Laguerre, Laguerre of Hermite type, and trigonometric Jacobi expansions in $H^p$ spaces, $p\in(0,1]$. We focus on systems associated with Lebesgue measure. The main reason behind this restriction is that the atomic $H^p$ spaces are not well defined for all $p\in(0,1)$ when the underlying measure is more arbitrary and only assumed to be doubling. On the other hand, if $p=1$, then there is no need for such restraint, see \cite[Theorem~2.2]{Plewa_2018_sharpLaguerre_arxiv}.

Although we prove Hardy's inequality for certain orthogonal systems, we are interested in establishing a general method which works in the known settings. Therefore, we enhance the approach from \cite{Plewa_2018_sharpLaguerre_arxiv} and adjust it for the case $p\in(0,1]$. It requires estimating derivatives of an arbitrary order of the kernels $R_r(x,y)$ (see \eqref{R_def}). In most cases it turns out to be not so difficult as one could expect once we have analogous asymptotics for the functions composing the considered basis. However, for the Laguerre expansions of Hermite type it is much more involved, see the proof of Proposition \ref{prop:Laguerre_Hermite_R_deriv}. This result can be viewed in terms of the heat kernel, see Subsection \ref{subS:heat_kern_estim}. Moreover, by some minor modifications we were able to add the parameter $s\in[p,2]$ in Theorem \ref{thm:Hp_general}. 

Another novelty of the paper is the unified approach to sharpness. Instead of constructing separate counterexamples in each setting, we construct one sequence of piecewise constant atoms which, with an additional assumption, justifies that the admissible exponent is sharp. In order to verify the added condition in the specific settings we have to subtly estimate the derivatives of the functions in orthonormal basis, see Lemmas \ref{lm:standard_Laguerre_sharp_estimates}, \ref{lm:Laguerre_Hermite_sharp_estimates}, and \ref{lm:Jacobi_fun_sharp_estimates}. These results can be interesting on their own.

The main result of the paper is Hardy's inequality for general setting, see Theorem \ref{thm:Hp_general}, and sharpness of the admissible exponent, see Propositions \ref{prop:general_sharpness} and \ref{prop:general_sharpness_2}. This theorem is then applied in several settings, see Theorems \ref{thm:main_Laguerre_standard}, \ref{thm:main_Laguerre_Hermite}, \ref{thm:main_generalized_Hermite}, and \ref{thm:main_Jacobi_fun}, which generalize many of already known in the literature results (see \cite{Balasubramanian_Radha_2005_JIPAA, Kanjin_1997_BLMS,Kanjin_2011_JMSJ, Kanjin_Sato_2004_MIA(Jacobi_Hardy),LiYuShi_2015_JFAA,Radha_2000_TJM, Radha_Thangavelu_2004_PAMS,Satake_2000_JMSJ}), but also answer some open questions (for instance sharpness or multi-dimensional inequality on $H^p$ for Laguerre expansions).    

Organization of the paper is as follows. In Section \ref{S:Hardy's_Ineq} we prepare the necessary tools to prove Hardy's inequality, like Hardy, BMO, and Lipschitz spaces. Moreover, we enhance the method from \cite{Plewa_2018_sharpLaguerre_arxiv} so it works for $H^p$ spaces with $p\in(0,1]$. Furthermore, we construct a counterexample to justify that the obtained formula for the admissible exponent is sharp. In Section \ref{S:Laguerre_standard} we discuss the standard Laguerre functions and estimate their derivatives near zero. This allows us to apply the general theorem. Section \ref{S:Laguerre_Hermite} is devoted to Laguerre expansions of Hermite type. Similarly as before we estimate the derivatives of the functions from the basis. However, this time it is not immediate to obtain such bounds for the corresponding kernels $R_r(x,y)$. For that purpose we need to use the integral formula for the Bessel function, see \eqref{eq:36} and Proposition \ref{prop:Laguerre_Hermite_R_deriv}. We also interpret this estimate in term of the heat kernel. Moreover, we deduce Hardy's inequality for the generalize Hermite framework. Lastly, in Section \ref{S:Jacobi_trigonom_fun} is analysed the Jacobi trigonometric function system. 

\subsection*{Notation}
Throughout this paper $d\geq 1$ denotes the dimension, $u,v$ are real variables, and $x=(x_1,\ldots,x_d)$, $y=(y_1,\ldots,y_d)$ are vectors from $\RR^d$ or $\RR^d_+=(0,\infty)^d$. We use $k,i,j$ for integers belonging to $\NN=\NN_+\cup\{0\}=\{0,1,\ldots\}$, and $n=(n_1,\ldots,n_d)\in\NN^d$ for the multi-indices. Let $\ven=n_1+\ldots+n_d$ stand for the length of $n$. We denote the type indices $\al$ and $\be$ with the same symbol in both situation $d=1$ and $d\geq 1$. In the latter case we use the same convention as for $n$. For any $u\in\RR$ we denote the largest integer not greater than $u$ by $\lfloor u\rfloor$, and the smallest integers not smaller than $u$ by $\lceil u\rceil$. We write $\lesssim$ for inequalities with non-negative entries which hold with a multiplicative constant. It may depend on the quantities stated beforehand, but not on the ones quantified afterwards. If $X\lesssim$ and $Y\lesssim X$ simultaneously, then we write $X\simeq Y$. 

\subsection*{Acknowledgements}
Research supported by the National Science Center of Poland, NCN grant no. 2018/29/N/ST1/02424.\\
The author is deeply grateful to Professor Krzysztof Stempak for his constant help during the preparation of this paper, and for his numerous comments and suggestions.

\section{Hardy's inequality}\label{S:Hardy's_Ineq}

In this section we develop a method of proving Hardy's inequality on $H^p$ spaces, $0<p\leq 1$, associated with orthonormal expansions. This is a generalization of the idea described in \cite[Section ~2]{Plewa_2018_sharpLaguerre_arxiv}. However, Hardy spaces, even in the sense of Coifman-Weiss \cite{Coifman_Weiss_1977_BAMS}, are not well defined for all $p$ if the underlying measure is only assumed to be doubling. Hence, we will focus our attention only on orthogonal expansions in $L^2(X)$, where $X$ is a subset of $\RR^d$ equipped with Lebesgue measure.

\subsection{Hardy spaces}

Recall that given any Schwartz function $\Phi$ such that $\int \Phi\neq 0$, one can define the Hardy space $H^p(\RR^d)$, $p<0\leq 1$, as the space of all distributions satisfying
\begin{equation*}
	\sup_{t>0} |f\ast \Phi_t|\in L^p(\RR^d),
\end{equation*}
where $\Phi_t(x)=t^{-d}\Phi(x/t)$. The $p$-th norm of the quantity above can be taken as a "norm" in $H^p(\RR^d)$. We remark that $\Vert\cdot\Vert_{H^p(\RR^d)}$ is indeed a norm only for $p=1$. In fact, $H^1(\RR^d)$ is a Banach space. In general, if $p\leq 1$, then $\Vert \cdot\Vert^p_{H^p(\RR^d)}$ is subadditive, hence $d(f,g)=\Vert f-g\Vert^p_{H^p(\RR^d)}$ defines a complete metric on $H^p(\RR^d)$.

A measurable function $a$ supported in a Euclidean ball $B$ is called a $(p,q)$-atom for $0<p\leq 1$ and $q\in[1,\infty]$, $p<q$, if it satisfies
\begin{equation*}
\int_B a(x)x^{n}\,dx=0\qquad \text{and}\qquad\Vert a\Vert_{L^q(\RR^d)}\leq |B|^{\frac{1}{q}-\frac{1}{p}},
\end{equation*}
where $x^n=x_1^{n_1}\ldots x_d^{n_d}$, $\ven\leq \lfloor d(p^{-1}-1)\rfloor $, and $|B|$ denotes the Lebesgue measure of $B$.
Every $f\in H^p(\RR^d)$ admits an atomic decomposition, namely there exist a sequence of $(p,q)$-atoms $\{a_j\}_{j\in\NN}$ and a sequence of complex coefficients  $\{\lambda_j\}_{j\in\NN}$ such that
\begin{equation*}
f(x)=\sum_{j\in\NN} \lambda_j a_j(x),\qquad \sum_{j\in\NN} |\lambda_j|^p<\infty.
\end{equation*}

There are several possibilities to define equivalent "norms" in $H^p(\RR^d)$. We choose the atomic one, which is given by
\begin{equation*}
	\Vert f\Vert_{H^p(\RR^d)}=\inf \Big(\sum_{j\in\NN} |\lambda_j|^p\Big)^{\frac{1}{p}},
\end{equation*}
where the infimum is taken over all atomic decompositions of $f$. 

Throughout this paper let $X$ be a convex Lipschitz domain (by which we mean an open connected set with Lipschitz boundary; obviously the latter refers only to the case $d\geq 2$) in $\RR^d$ equipped with Lebesgue measure and the Euclidean metric. There is a number of possible definitions of $H^p$ spaces on subsets of $\RR^d$, see for instance the papers of Stein et al. \cite{Chang&Dafni&Stein_1999_TAMS, Chang&Krantz&Stein_1993_JFA} and of Miyachi \cite{Miyachi_1990_SM}. We choose the following one
\begin{equation*}
H^p(X)=\Big\{f: \exists F\in H^p(\RR^d)\ {\rm supp}\,F\subset \bar{X},\ F\big|_X=f \Big\}.
\end{equation*} 
We remark (see \cite[p.~137]{Stein_HarmonicAnalysis}) that each $f\in H^p(X)$ admits an atomic decomposition with all atoms supported in $X$, since $X$ is a Lipschitz domain. We set $\Vert f\Vert_{H^p(X)}$ similarly as in $\RR^d$. Observe that for $f$ and $F$ as above we have
\begin{equation}\label{eq:21}
\Vert F\Vert_{H^p(\RR^d)}\leq \Vert f\Vert_{H^p(X)},
\end{equation}
since for $F$ the underlying infimum is taken over a possibly larger set.

\subsection{Dual type spaces}

We need to give some meaning to the paring $\langle f,\fun\rangle$ for $\fun$ from a given orthonormal basis and $f\in H^p(\RR^d)$ or, more generally, for $f\in H^p(X)$. For this purpose we shall make use of the duality relation between $H^p(\RR^d)$ and $BMO(\RR^d)$ and Lipschitz spaces. 

A locally integrable function $f$ is in $BMO(\RR^d)$ (bounded mean oscillation space) if
\begin{equation*}
\Vert f\Vert_{BMO(\RR^d)}:=\sup_B \frac{1}{|B|} \int_B |f(x)-f_B|\,dx<\infty,
\end{equation*}
where the supremum is taken over all balls $B\subset \RR^d$ and $f_B=|B|^{-1} \int_B f$ is the mean value of $f$ over $B$. Observe that the expression above vanishes for constant functions. In fact, it is usual to define $BMO(\RR^d)$ as the quotient of the above space by the space of constant functions. Then $BMO(\RR^d)$ with the norm $\Vert \cdot\Vert_{BMO(\RR^d)}$ becomes a Banach space. For more details we refer to the literature, see \cite{Grafakos_Modern, Stein_HarmonicAnalysis}.

Now let $\Lambda_{\nu}(\RR^d)$, $\nu>0$, denote the Lipschitz space. If $\nu\notin\NN_+$, then $\Lambda_\nu(\RR^d)$ is composed of all functions $g\in\mathcal{C}^{(\lfloor \nu\rfloor)}(\RR^d)\cap L^\infty(\RR^d)$ satisfying the condition
\begin{equation*}
\Vert g\Vert_{\Lambda_\nu(\RR^d)}:= \Vert g\Vert_{L^\infty(\RR^d)} + \max_{\ven= \lfloor \nu\rfloor}\sup_{x,h\in \RR^d} \frac{\big|\partial^{n} g(x+h)-\partial^n g(x)\big|}{|h|^{\nu-\lfloor \nu\rfloor}}<\infty,
\end{equation*}
where $|h|$ denotes the Euclidean norm of the vector $h\neq 0$. If $\nu\in\NN_+$, then the above condition is replaced by
\begin{equation*}
\Vert g\Vert_{\Lambda_\nu(\RR^d)}:= \Vert g\Vert_{L^\infty(\RR^d)} +\max_{\ven= \nu-1} \sup_{x,h\in \RR^d} \frac{\big|\partial^n g(x+h)-2\partial^n g(x)+\partial^n g(x-h)\big|}{|h|}<\infty
\end{equation*}
for $g\in\mathcal{C}^{(\nu-1)}(\RR^d)\cap L^\infty(\RR^d)$. Finally, for $\nu=0$ we set $\Lambda_0(\RR^d):=BMO(\RR^d)$.

It is known that $BMO(\RR^d)$ is the dual of $H^1(\RR^d)$ (see \cite{Fefferman_1971_BAMS_duality,Fefferman&Stein_1972_AM}), whereas for $H^p(\RR^d)$, $p<1$, the duals are the Campanato spaces (see \cite{Campanato_1964_ASNSP(H^p-duality)} and for instance \cite[p.~55]{Lu_1995_FourLectures}). Nonetheless, the Lipschitz spaces described above and $H^p(\RR^d)$, $p\in(0,1]$, have a duality property too (see \cite{Garcia-Cuerva&RdFrancia_1985_N-HPA,Grafakos_Modern, Stein_HarmonicAnalysis,Uchiyama_Springer_2001}). Moreover, they are easier to handle and completely sufficient for our purposes.

The above-mentioned relation is the following: if $g\in \Lambda_{d(\frac{1}{p}-1)}(\RR^d)$, then
\begin{equation*}
|T_g(f)|:=\Big|\int_{\RR^d} g(x)f(x)\,dx\Big|\lesssim \Vert g\Vert_{\Lambda_{d (\frac{1}{p}-1) }(\RR^d)} \Vert f\Vert_{H^p(\RR^d)},
\end{equation*}
uniformly in $f\in H^p(\RR^d)\cap L^1(\RR^d)$. Moreover, since $H^p(\RR^d)\cap L^1(\RR^d)$ is dense in $H^p(\RR^d)$ (see for instance \cite[p.~54]{Lu_1995_FourLectures}), the functional $T_g$ has a unique bounded extension to the whole $H^p(\RR^d)$ with the same bound.

Now we will show a property of one-dimensional functions which are in $\Lambda_{\nu}(\RR)$, and then we will justify that the tensor products of such functions belong to $\Lambda_{\nu}(\RR^d)$. 

\begin{lm}\label{lm:deriv_in_Lipschitz}
	Let $\nu>1$ and $g\in\Lambda_{\nu}(\RR)$. Then, for $1\leq k\leq \lceil \nu\rceil-1$,  $g^{(k)}\in \Lambda_{\nu-k}(\RR)$ and
	\begin{equation*}
	\Vert g^{(k)}\Vert_{\Lambda_{\nu-k}(\RR)} \leq C_\nu \Vert g\Vert_{\Lambda_\nu(\RR)},\qquad g\in\Lambda_{\nu}(\RR),
	\end{equation*}
	for some positive constant $C_\nu$ independent of $g$.
\end{lm}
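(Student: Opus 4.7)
The plan is to argue by induction on $k$, reducing to the base case $k=1$: if $\nu>1$ and $g\in\Lambda_\nu(\RR)$, then $g'\in\Lambda_{\nu-1}(\RR)$ with $\Vert g'\Vert_{\Lambda_{\nu-1}(\RR)}\leq C_\nu \Vert g\Vert_{\Lambda_\nu(\RR)}$. The assumption $k\leq \lceil\nu\rceil-1$ gives $k<\nu$, hence $\nu-(j-1)>1$ for every $j\leq k$, and one may iterate the base case $k$ times.

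For the base case, the Hölder or Zygmund seminorm part of $\Vert g'\Vert_{\Lambda_{\nu-1}(\RR)}$ transfers automatically from $\Vert g\Vert_{\Lambda_\nu(\RR)}$. Indeed, in the non-integer case $\lfloor\nu-1\rfloor=\lfloor\nu\rfloor-1$ and $\nu-1-\lfloor\nu-1\rfloor=\nu-\lfloor\nu\rfloor$, so the Hölder quantity of order $\nu-\lfloor\nu\rfloor$ for $(g')^{(\lfloor\nu-1\rfloor)}=g^{(\lfloor\nu\rfloor)}$ appearing in the definition of $\Vert g'\Vert_{\Lambda_{\nu-1}(\RR)}$ is exactly the one bounded by $\Vert g\Vert_{\Lambda_\nu(\RR)}$; the integer case $\nu\in\NN_+$, $\nu\geq 2$, is analogous, with the Zygmund second-difference seminorm for $(g')^{(\nu-2)}=g^{(\nu-1)}$.

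The heart of the argument is the $L^\infty$-bound on $g'$, which I would obtain via a two-step interpolation. First, set $m=\lfloor\nu\rfloor$ in the non-integer case (and $m=\nu-1$ in the integer case) and bound $\Vert g^{(m)}\Vert_{L^\infty(\RR)}$. In the non-integer case the mean value theorem for divided differences yields some $\xi\in(x,x+mh)$ with $g^{(m)}(\xi)=h^{-m}\Delta_h^m g(x)$, so $|g^{(m)}(\xi)|\leq 2^m h^{-m}\Vert g\Vert_{L^\infty(\RR)}$; combined with the Hölder bound $|g^{(m)}(x)-g^{(m)}(\xi)|\leq \Vert g\Vert_{\Lambda_\nu(\RR)}(mh)^{\nu-m}$ and optimization in $h>0$ this gives
\begin{equation*}
\Vert g^{(m)}\Vert_{L^\infty(\RR)}\lesssim \Vert g\Vert_{L^\infty(\RR)}^{(\nu-m)/\nu}\Vert g\Vert_{\Lambda_\nu(\RR)}^{m/\nu}\lesssim \Vert g\Vert_{\Lambda_\nu(\RR)}.
\end{equation*}
Second, the classical Landau--Kolmogorov interpolation inequality
\begin{equation*}
\Vert g^{(j)}\Vert_{L^\infty(\RR)}\lesssim \Vert g\Vert_{L^\infty(\RR)}^{1-j/m}\Vert g^{(m)}\Vert_{L^\infty(\RR)}^{j/m},\qquad 1\leq j\leq m-1,
\end{equation*}
combined with the preceding bound yields $\Vert g^{(j)}\Vert_{L^\infty(\RR)}\lesssim \Vert g\Vert_{\Lambda_\nu(\RR)}$ for every $1\leq j\leq m$; specializing to $j=1$ gives the required estimate for $g'$.

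I expect the main obstacle to be the integer case of the first step: there the Hölder estimate is unavailable and one must work directly with the symmetric second-difference condition on $g^{(\nu-1)}$. My plan is to replace $\Delta_h^m$ by a symmetric divided difference of order $m+1$ centered at $x$, whose magnitude can be controlled via the Zygmund bound, and then to carry out the analogous optimization in $h$.
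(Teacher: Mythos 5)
Your proposal is correct and essentially complete for non\nobreakdash-integer $\nu$, but it takes a genuinely different route from the paper's. The paper argues by contradiction: assuming $|g^{(\lfloor\nu\rfloor)}(u_0)|$ exceeds a suitable multiple of $\Vert g\Vert_{\Lambda_\nu(\RR)}$, it uses the H\"older condition to show that $g^{(\lfloor\nu\rfloor)}$ keeps a fixed sign and stays large on a unit interval around $u_0$, and then contradicts the bound $2^{\lfloor\nu\rfloor}\Vert g\Vert_{L^\infty(\RR)}$ on the $\lfloor\nu\rfloor$-th finite difference of $g$ by writing that difference as an iterated integral of $g^{(\lfloor\nu\rfloor)}$; the intermediate derivatives are handled by a second contradiction argument of the same flavour, which in effect reproves the Landau--Kolmogorov inequality from scratch. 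Your direct two-step interpolation (optimizing in $h$ the competition between $h^{-m}\Vert g\Vert_{L^\infty(\RR)}$ and $h^{\nu-m}\Vert g\Vert_{\Lambda_\nu(\RR)}$, then invoking the classical Landau--Kolmogorov inequality) packages the same finite-difference mechanism more cleanly and yields an explicit multiplicative bound $\Vert g^{(m)}\Vert_{L^\infty(\RR)}\lesssim\Vert g\Vert_{L^\infty(\RR)}^{(\nu-m)/\nu}\Vert g\Vert_{\Lambda_\nu(\RR)}^{m/\nu}$ that the paper does not state. Your reduction to $k=1$ by induction is sound, since $k\leq\lceil\nu\rceil-1<\nu$ keeps every intermediate order strictly above $1$, and the seminorm transfer you describe is indeed automatic.

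The one piece you have not closed is the integer case, and the sketch you give needs a correction: a symmetric difference of order $m+1=\nu$ is, via the integral representation, a $(\nu-1)$-fold integral of a \emph{first} difference of $g^{(\nu-1)}$, which the Zygmund condition does not control, and in any case it approximates $h^{\nu-1}$ times a first difference of $g^{(\nu-1)}$ rather than $g^{(\nu-1)}(x)$ itself. What does work is to keep the \emph{centered} difference of order $m=\nu-1$, i.e. the $(\nu-1)$-fold iterate $\delta_h^{\nu-1}g(x)$ of $\delta_hg(x)=g(x+\tfrac h2)-g(x-\tfrac h2)$, and compare it directly with $h^{\nu-1}g^{(\nu-1)}(x)$: writing $\sigma=s_1+\ldots+s_{\nu-1}$ and using the symmetry of the cube under $s\mapsto-s$,
\begin{equation*}
\delta_h^{\nu-1}g(x)-h^{\nu-1}g^{(\nu-1)}(x)=\frac12\int_{[-h/2,h/2]^{\nu-1}}\Big(g^{(\nu-1)}(x+\sigma)-2g^{(\nu-1)}(x)+g^{(\nu-1)}(x-\sigma)\Big)\,ds,
\end{equation*}
so the integrand is bounded by $\Vert g\Vert_{\Lambda_\nu(\RR)}|\sigma|\lesssim\Vert g\Vert_{\Lambda_\nu(\RR)}h$ and the same optimization in $h$ gives $\Vert g^{(\nu-1)}\Vert_{L^\infty(\RR)}\lesssim\Vert g\Vert_{L^\infty(\RR)}^{1/\nu}\Vert g\Vert_{\Lambda_\nu(\RR)}^{(\nu-1)/\nu}$. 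With this replacement your argument is complete; to be fair, the paper itself dispatches the integer case only with the phrase ``with minor modification''.
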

\begin{proof}	
	Fix non-zero $g\in\Lambda_{\nu}(\RR)$. Observe that it suffices to justify that $g^{(k)}$, $1\leq k\leq \lceil \nu\rceil-1$ are bounded, with the $L^\infty$-norm estimated by a constant times $\Vert g\Vert_{\Lambda_{\nu}(\RR)}$. Firstly we will check this for $k=\lceil \nu\rceil-1$.
	
	Let $\nu>1$ be such that $\nu\notin \NN$ (then $\lceil \nu\rceil-1=\lfloor \nu\rfloor$). We assume a contrario that there exists $u_0\in\RR$ such that $	|g^{(\lfloor \nu\rfloor)}(u_0)|> \big((2\lfloor \nu\rfloor)^{\lfloor \nu\rfloor} +2\big) \Vert g\Vert_{\Lambda_{\nu} (\RR)} $. Observe that for $h\in[-1,1]$ we have
	\begin{equation*}
	\big| g^{(\lfloor \nu\rfloor)}(u_0+h)-g^{(\lfloor \nu\rfloor)}(u_0)\big|\leq |h|^{\nu-\lfloor \nu\rfloor} \Vert g\Vert_{\Lambda_{\nu} (\RR)}\leq \Vert g\Vert_{\Lambda_{\nu} (\RR)}.
	\end{equation*}
	Hence,
	\begin{equation*}
		\big| g^{(\lfloor \nu\rfloor)}(u)\big|> \big((2\lfloor \nu\rfloor)^{\lfloor \nu\rfloor} +1\big) \Vert g\Vert_{\Lambda_{\nu} (\RR)},\qquad u\in[u_0-1,u_0+1].
	\end{equation*}
	Moreover, $g^{(\lfloor \nu\rfloor)}(x)$ does not change the sign on this interval since it is continuous. Now observe that
	\begin{align*}
	2^{\lfloor \nu\rfloor} \Vert g\Vert_{\Lambda_{\nu} (\RR)}&\geq \Big|\sum_{i=0}^{\lfloor \nu\rfloor}  {\lfloor \nu\rfloor \choose i} (-1)^{\lfloor \nu\rfloor-i} g\big(u_0+\frac{i}{\lfloor \nu\rfloor}\big) \Big|\\
	&=\Big| \int_0^{\lfloor \nu\rfloor^{-1}}\ldots \int_0^{\lfloor \nu\rfloor^{-1}}  g^{(\lfloor \nu\rfloor)}(u_0+s_1+\ldots+s_{\lfloor \nu\rfloor}) \,ds_{\lfloor \nu\rfloor}\ldots ds_1\Big|\\
	&=  \int_0^{\lfloor \nu\rfloor^{-1}}\ldots \int_0^{\lfloor \nu\rfloor^{-1}} \Big|  g^{(\lfloor \nu\rfloor)}(u_0+s_1+\ldots+s_{\lfloor \nu\rfloor})\Big| \,ds_{\lfloor \nu\rfloor}\ldots ds_1\\
	&> \big(2^{\lfloor \nu\rfloor} +\lfloor \nu\rfloor^{-\lfloor \nu\rfloor}\big) \Vert g\Vert_{\Lambda_{\nu} (\RR)}.
	\end{align*}
	The obtained contradiction proves that $g^{(\lfloor \nu\rfloor)}\in L^\infty(\RR)$ and, to be more precise, that $\Vert g^{(\lfloor \nu\rfloor)}\Vert_{L^\infty (\RR)}\leq \big((2k)^{\lfloor \nu\rfloor} +2\big) \Vert g\Vert_{\Lambda_{\nu} (\RR)}$.
	
	Observe that if $\nu\in\NN$, $\nu\geq 2$, then the proof, with minor modification, works as well. Hence, it suffices to justify that given $\ell\in\NN$ and $g\in\mathcal{C}^{\ell}(\RR)$ such that $g,g^{(\ell)}\in L^\infty(\RR)$ we have $g^{(j)}\in L^\infty(\RR)$  $j\in\{1,\ldots,\ell-1 \} $, with $\Vert g^{(j)}\Vert_{L^\infty(\RR)}\lesssim \Vert g\Vert_{L^\infty(\RR)}+\Vert g^{(\ell)}\Vert_{L^\infty(\RR)}$. This is an easy exercise but for the reader's convenience we give a short proof.
	
	Fix $\ell\in\NN$ and $j\in \{1,\ldots,\ell-1 \}$. Now assume a contrario that
	\begin{equation*}
	|g^{(j)}(u_0)|> j^j (2^{\ell} +1)\Vert g\Vert_{L^\infty(\RR)}+\Vert g^{(\ell)}\Vert_{L^\infty(\RR)}.
	\end{equation*}
	Notice that
	\begin{align*}
	&\hspace{-1cm}\Big|\sum_{i=0}^{\ell-j} {\ell -j\choose i} (-1)^{\ell -j-i} g^{(j)}(u_0+ih)\Big|\\
	&=\Big| \int_0^h\ldots \int_0^h g^{(\ell)}(u_0+s_1+\ldots+s_{\ell-j}) \, ds_{\ell-j}\ldots ds_1 \Big|\leq h^{\ell-j} \Vert g^{(\ell)}\Vert_{L^\infty(\RR)}. 
	\end{align*}
	Hence, for $h\in[-1,1]$ we obtain
	\begin{align*}
	\Big|\sum_{i=1}^{\ell-j} {\ell -j\choose i} (-1)^{\ell -j-i} g^{(j)}(x_0+ih)\Big|> j^j (2^{\ell} +1)\Vert g\Vert_{L^\infty(\RR)},
	\end{align*}
	and this sum does not change the sign in this interval. But on the other hand,
	\begin{align*}
	&\hspace{-1cm}(2^{\ell} +1)\Vert g\Vert_{L^\infty(\RR)}\\
	&< \Big| \int_0^{j^{-1}}\ldots \int_0^{j^{-1}} \sum_{i=1}^{\ell-j} {\ell -j\choose i} (-1)^{\ell -j-i} g^{(j)}(u_0+i(s_1+\ldots+s_j))\,ds_j\ldots ds_1\Big|\\
	&\leq  \sum_{i=1}^{\ell-j} {\ell -j\choose i} 2^{j} \Vert g\Vert_{L^\infty(\RR)}\\
	&\leq 2^{\ell} \Vert g\Vert_{L^\infty(\RR)}.
	\end{align*}
	This contradiction finishes the proof of the lemma.
\end{proof}

\begin{lm}\label{lm:tensor_in_Lambda_nu}
	Let $\nu>0$ and $g_1,\ldots, g_d\in \Lambda_\nu(\RR)$.  If $\nu=1$, then we additionally assume that $g_i'$, $i=1,\ldots,d$, exist and are bounded. Then the function 
	\begin{equation*}
	g(x)=g_1(x_1)\cdot\ldots\cdot g_d(x_d),\qquad x\in\RR^d,
	\end{equation*}
	belongs to $\Lambda_\nu(\RR^d)$ and
	\begin{equation*}
	\Vert g\Vert _{\Lambda_{\nu}(\RR^d)}\lesssim \prod_{ i=1}^d\Vert g_i\Vert _{\Lambda_{\nu}(\RR)}.
	\end{equation*}
\end{lm}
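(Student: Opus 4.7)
The plan is to reduce the multi-dimensional problem to the one-dimensional hypotheses via the product rule and a telescoping decomposition of finite differences, treating separately the four cases dictated by the definition of $\Lambda_\nu$: $\nu\in(0,1)$, $\nu=1$, $\nu>1$ with $\nu\notin\NN$, and $\nu\in\NN_+$ with $\nu\geq 2$. The $L^\infty$ bound is immediate since $\|g\|_{L^\infty(\RR^d)}=\prod_i \|g_i\|_{L^\infty(\RR)}\leq \prod_i \|g_i\|_{\Lambda_\nu(\RR)}$, so the work lies entirely in controlling the relevant first or second difference.

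For $\nu\in(0,1)$ I would write the first difference as the telescoping sum
\[
g(x+h)-g(x)=\sum_{i=1}^d \Big(\prod_{j<i} g_j(x_j+h_j)\Big)\bigl(g_i(x_i+h_i)-g_i(x_i)\bigr)\Big(\prod_{j>i} g_j(x_j)\Big),
\]
estimate each factor in a product by $\|g_j\|_{L^\infty}$, use $|g_i(x_i+h_i)-g_i(x_i)|\leq \|g_i\|_{\Lambda_\nu}|h_i|^\nu\leq\|g_i\|_{\Lambda_\nu}|h|^\nu$, and collect. For $\nu=1$ a similar telescoping works but on the symmetric second difference $g(x+h)-2g(x)+g(x-h)$; here one splits into terms that isolate a single $g_i(x_i+h_i)-2g_i(x_i)+g_i(x_i-h_i)$ (bounded by $\|g_i\|_{\Lambda_1}|h|$) together with cross terms of the form $(g_i(x_i\pm h_i)-g_i(x_i))(g_j(x_j\pm h_j)-g_j(x_j))$, which are bounded using the extra hypothesis that each $g_i'$ is bounded (these cross terms contribute $O(|h_i||h_j|)=O(|h|^2)=O(|h|)$ in the range $|h|\leq 1$, and are uniformly bounded for $|h|\geq 1$ thanks to the $L^\infty$ control).

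For $\nu>1$ with $\nu\notin\NN$ I would first fix a multi-index $n$ with $|n|=\lfloor\nu\rfloor$ and apply the product rule to obtain $\partial^n g(x)=\prod_i g_i^{(n_i)}(x_i)$. By Lemma~\ref{lm:deriv_in_Lipschitz} each $g_i^{(n_i)}$ lies in $\Lambda_{\nu-n_i}(\RR)$ with norm controlled by $\|g_i\|_{\Lambda_\nu(\RR)}$ (the cases $n_i=0$ and $n_i=\lfloor\nu\rfloor$ being trivial or direct). Then I repeat the telescoping decomposition from the $\nu\in(0,1)$ case on
\[
\partial^n g(x+h)-\partial^n g(x)=\sum_{i=1}^d\Big(\prod_{j<i}g_j^{(n_j)}(x_j+h_j)\Big)\bigl(g_i^{(n_i)}(x_i+h_i)-g_i^{(n_i)}(x_i)\bigr)\Big(\prod_{j>i}g_j^{(n_j)}(x_j)\Big),
\]
using the Hölder estimate $|g_i^{(n_i)}(x_i+h_i)-g_i^{(n_i)}(x_i)|\lesssim \|g_i\|_{\Lambda_\nu}|h|^{\nu-\lfloor\nu\rfloor}$, valid because $\nu-n_i\geq\nu-\lfloor\nu\rfloor$ (so when $\nu-n_i>\nu-\lfloor\nu\rfloor$ one controls the higher-order modulus by the lower-order one on bounded scales, and uses $L^\infty$ otherwise). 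The remaining case $\nu\in\NN_+$, $\nu\geq 2$, is handled analogously, combining the product rule for $|n|=\nu-1$ with the symmetric-second-difference telescoping already developed for $\nu=1$, and invoking Lemma~\ref{lm:deriv_in_Lipschitz} to ensure that each intermediate derivative $g_i^{(n_i)}$ as well as its first derivative is bounded in terms of $\|g_i\|_{\Lambda_\nu(\RR)}$.

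The main obstacle is bookkeeping: keeping the telescoping decomposition and the multi-index Leibniz rule aligned so that each one-dimensional factor appearing in a difference is controlled by the correct $\Lambda_{\nu-n_i}$-norm (which is why Lemma~\ref{lm:deriv_in_Lipschitz} is essential here), and so that when $|h|$ is large the boundedness of the factors takes over from the Hölder estimate. Once the correct decomposition is written down the estimates collapse to products of one-dimensional $\Lambda_\nu$-norms, yielding the stated inequality with an implicit constant depending only on $d$ and $\nu$.
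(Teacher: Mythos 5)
Your proposal is correct and follows essentially the same route as the paper: the $L^\infty$ bound is immediate, the non-integer case is handled by the Leibniz rule plus the telescoping decomposition of the first difference with Lemma \ref{lm:deriv_in_Lipschitz} controlling each one-dimensional factor, and the integer case uses the analogous decomposition of the symmetric second difference together with the mean value theorem (and the extra boundedness of $g_i'$ when $\nu=1$). Your explicit remark on why the Hölder exponent $\nu-\lfloor\nu\rfloor$ survives when $n_i<\lfloor\nu\rfloor$ (mean value theorem on small scales, $L^\infty$ on large scales) fills in a detail the paper leaves implicit, but the argument is the same.
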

\begin{proof}
	Obviously $g\in L^\infty(\RR^d)$. Firstly assume that $\nu$ is a non-integer positive number. Let $n$ be a multi-index such that $\ven =\lfloor \nu\rfloor$. Then, for $h=(h_1,\ldots,h_d)\in\RR^d\setminus\{0\}$, we write the difference $\partial^n g(x+h)-\partial^n g(x)$ as
	\begin{align*}
	&\hspace{-1cm}\big(g_1^{(n_1)}(x_1+h_1)- g_1^{(n_1)}(x_1)\big)  g_2^{(n_2)}(x_2+h_2)\cdot\ldots\cdot g_d^{(n_d)}(x_d+h_d)\\
	&+ g_1^{(n_1)}(x_1)\big( g_2^{(n_2)}(x_2+h_2)- g_2^{(n_2)}(x_2)\big)  g_3^{(n_3)}(x_3+h_3)\cdot\ldots\cdot g_d^{(n_d)}(x_d+h_d)\\
	&+\ldots\\
	&+g_1^{(n_1)}(x_1) \cdot\ldots\cdot g_{d-1}^{(n_{d-1})}(x_{d-1})\big( g_d^{(n_d)}(x_d+h_d)- g_d^{(n_d)}(x_d)\big).
	\end{align*}
	Hence, by Lemma \ref{lm:deriv_in_Lipschitz} we get
	\begin{align*}
	\frac{\big|\partial^{n} g(x+h)-\partial^n g(x)\big|}{|h|^{\nu-\lfloor \nu\rfloor}}\lesssim \prod_{ i=1}^d\Vert g_i\Vert _{\Lambda_{\nu}(\RR)},\qquad x,h\in\RR^d,\ h\neq 0.
	\end{align*}
	
	Next, assume that $\nu\in\NN$ is such that $\nu\geq 2$. Then, for $\ven=\nu-1$, we estimate an expression of the form
	\begin{align*}
	&|h|^{-1}\big|\partial_{x_1}^{n_1} g_1(x_1+h_1)\cdot\ldots\cdot\partial_{x_d}^{n_d} g_1(x_d+h_d)-2\partial_{x_1}^{n_1} g_1(x_1)\cdot\ldots\cdot\partial_{x_d}^{n_d} g_d(x_d)\\
	&\qquad+\partial_{x_1}^{n_1} g_1(x_1-h_1)\cdot\ldots\cdot\partial_{x_d}^{n_d} g_1(x_d-h_d)\big|.
	\end{align*}
	Observe that if $n$ is a multi-index which has at least two non-zero components, then by Lemma \ref{lm:deriv_in_Lipschitz} the expression above is estimated by a constant times $\prod_{ i=1}^d\Vert g_i\Vert _{\Lambda_{\nu}(\RR)}$. Indeed, it easily follows from the mean value theorem, more precisely from the estimate
	\begin{equation*}
	\big|\partial_{x_i}^{n_i} g_i(x_i+h_i)-\partial_{x_i}^{n_i} g_i(x_i)\big| \leq  \big\Vert g_i^{(n_i+1)} \big\Vert_{L^\infty(\RR)} |h_i|\leq  \big\Vert g_i \big\Vert_{\Lambda_\nu(\RR)} |h_i|,
	\end{equation*}
	which holds since $n_i\leq \nu-2$. Otherwise, we can assume that $n=(\nu-1,0,\ldots,0)$. Hence, denoting $\bar{x}=(x_2,\ldots,x_d) $, $\bar{g}(\bar{x})=\prod_{i=2}^d g_i(x_i)$ and $\bar{h}=(h_2,\ldots,h_d)$, we write
	\begin{align*}
	&\partial_{x_1}^{\nu-1} g_1(x_1+h_1)\bar{g}(\bar{x}+\bar{h})-2\partial_{x_1}^{\nu-1} g_1(x_1)\bar{g}(\bar{x})+\partial_{x_1}^{\nu-1} g_1(x_1-h_1)\bar{g}(\bar{x}-\bar{h})\\
	&\qquad=\partial_{x_1}^{\nu-1} g_1(x_1+h_1)\big(\bar{g}(\bar{x}+\bar{h})-\bar{g}(\bar{x})\big)+\big(\partial_{x_1}^{\nu-1} g_1(x_1+h_1)-2\partial_{x_1}^{\nu-1} g_1(x_1)\\
	&\qquad+\partial_{x_1}^{\nu-1} g_1(x_1-h_1) \big)\bar{g}(\bar{x})-\partial_{x_1}^{\nu-1} g_1(x_1-h_1) \big(\bar{g}(\bar{x})-\bar{g}(\bar{x}-\bar{h})\big).
	\end{align*}
	Again, it suffices to use the mean value theorem and Lemma \ref{lm:deriv_in_Lipschitz} to get the required bound.
	
	Observe that this argument is valid also for $\nu=1$ provided we assume that $g'_i$ exist and are bounded. This finishes the proof.
\end{proof}

Now, we shall define the Lipschitz (and $BMO$) spaces in $X$ and prove similar duality. We say that a function $g$ defined on $X$ belongs to $\Lambda_\nu(X)$, $\nu\geq 0$, if there exists $G\in\Lambda_\nu(\RR^d)$ such that $G\big|_X=g$. Note that this type of definition differs from the one of $H^p(X)$, where we assumed that the extension vanishes outside $X$. In this case this is not possible because of the smoothness requirement. 

Moreover, we set
\begin{equation*}
\Vert g\Vert _{\Lambda_\nu(X)}=\inf \Vert G\Vert _{\Lambda_\nu(\RR^d)},
\end{equation*}
where the infimum is taken over all $G$ extending $g$ to $\RR^d$. With those definitions the following lemma holds.

\begin{lm}\label{lm:H^p/Lipschitz}
	Let $X$ be a convex Lipschitz domain in $\RR^d$. If $p\in(0,1]$ and $g\in \Lambda_{d(\frac{1}{p}-1)}(X)$, then
	\begin{equation*}
	|T_g(f)|:=\Big|\int_{X} g(x)f(x)\,dx\Big|\lesssim \Vert g\Vert_{\Lambda_{d (\frac{1}{p}-1) }(X)} \Vert f\Vert_{H^p(X)} ,
	\end{equation*}
	uniformly in $f\in H^p(X)\cap L^1(X)$. Consequently, the functional $T_g$ has a (unique) bounded extension to the whole $H^p(X)$ such that $|T_g(f)|\lesssim \Vert g\Vert_{\Lambda_{d(\frac{1}{p}-1)}(X)} \Vert f\Vert_{H^p(X)} $, $f\in H^p(X)$.
\end{lm}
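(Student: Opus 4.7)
The plan is to reduce everything to the already-cited duality between $H^p(\mathbb{R}^d)$ and $\Lambda_{d(1/p-1)}(\mathbb{R}^d)$ via the extension/restriction machinery set up in the preceding subsections.

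First, fix $f\in H^p(X)\cap L^1(X)$ and $g\in\Lambda_{d(1/p-1)}(X)$. Let $F$ be the extension of $f$ by zero to $\RR^d$. Since $X$ is a Lipschitz domain, the remark after the definition of $H^p(X)$ ensures that $f$ admits an atomic decomposition $f=\sum_j\lambda_j a_j$ with atoms supported in $X$; extending each $a_j$ by zero produces a $(p,q)$-atomic decomposition of $F$ in $\RR^d$, so $F\in H^p(\RR^d)$ and, by \eqref{eq:21}, $\|F\|_{H^p(\RR^d)}\le\|f\|_{H^p(X)}$. Moreover, $F\in L^1(\RR^d)$ because $f\in L^1(X)$. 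Next, pick any $G\in\Lambda_{d(1/p-1)}(\RR^d)$ with $G|_X=g$.

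Since $F$ vanishes outside $\bar{X}$ and $\partial X$ has Lebesgue measure zero (Lipschitz boundary), we have
\begin{equation*}
\int_X g(x)f(x)\,dx=\int_{\RR^d}G(x)F(x)\,dx.
\end{equation*}
Apply the Euclidean duality statement quoted just before Lemma~\ref{lm:deriv_in_Lipschitz}, valid on the dense subspace $H^p(\RR^d)\cap L^1(\RR^d)$, to obtain
\begin{equation*}
\Big|\int_{\RR^d}G(x)F(x)\,dx\Big|\lesssim \|G\|_{\Lambda_{d(1/p-1)}(\RR^d)}\,\|F\|_{H^p(\RR^d)}\le \|G\|_{\Lambda_{d(1/p-1)}(\RR^d)}\,\|f\|_{H^p(X)}.
\end{equation*}
The left-hand side is independent of $G$, so taking the infimum over all admissible extensions $G$ of $g$ gives the desired bound
\begin{equation*}
|T_g(f)|\lesssim \|g\|_{\Lambda_{d(1/p-1)}(X)}\,\|f\|_{H^p(X)}.
\end{equation*}

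For the final assertion, one notes that $H^p(X)\cap L^1(X)$ is dense in $H^p(X)$: finite atomic sums lie in both spaces (atoms are bounded with compact support, hence $L^1$), and every $f\in H^p(X)$ is the $H^p(X)$-limit of partial sums of an atomic decomposition. The bounded linear functional $T_g$ therefore extends uniquely to all of $H^p(X)$ with the same bound. The only mildly delicate point is the initial identification $\int_X gf=\int_{\RR^d}GF$; once one observes that the boundary is negligible and $F$ vanishes on $\RR^d\setminus\bar X$, the rest is a direct transfer argument, so I do not expect a serious obstacle here — the work has already been done in establishing \eqref{eq:21} and the atomic description of $H^p(X)$.
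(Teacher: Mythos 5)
Your argument is correct and follows essentially the same route as the paper: extend $f$ by zero to $F\in H^p(\RR^d)$, extend $g$ to $G\in\Lambda_{d(\frac1p-1)}(\RR^d)$, apply the Euclidean duality together with \eqref{eq:21}, and take the infimum over $G$. The only cosmetic difference is in the last step, where you extend $T_g$ by density of $H^p(X)\cap L^1(X)$ (via partial sums of atomic decompositions), while the paper defines the extension directly through the Euclidean functional $\tilde T_G$ and checks independence of $G$ afterwards; both devices rest on the same facts and yield the same conclusion.
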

\begin{proof}
	Indeed, we mentioned before that the claim is valid for $X=\RR^d$. Fix $p\in(0,1]$. For any $f\in H^p(X)\cap L^1(X)$ we take $F\in H^p(\RR^d)$ such that $F\big|_X=f$ and ${\rm supp}\,F\subset \bar{X}$, so that $F\in L^1(\RR^d)$. Similarly, for any $g\in\Lambda_{d (\frac{1}{p}-1) }(X)$ let $G\in\Lambda_{d (\frac{1}{p}-1) }(\RR^d)$ be an extension of $g$ to $\RR^d$. Then we have
	\begin{align*}
	\Big|\int_X f(x)g(x)\,dx\Big| =\Big|\int_{\RR^d} F(x)G(x)\,dx \Big| &\lesssim \Vert F\Vert_{H^p(\RR^d)} \Vert G\Vert_{\Lambda_{d (\frac{1}{p}-1) }(\RR^d)}\\
	&\leq \Vert f\Vert_{H^p(X)} \Vert G\Vert_{\Lambda_{d (\frac{1}{p}-1) }(\RR^d)},
	\end{align*}
	where in the last inequality we used \eqref{eq:21}.	By taking the infimum over $G$ we obtain the required bound. 
	
	Now we drop the assumption that $f\in L^1(X)$. For $\tilde{G}\in\Lambda_{d (\frac{1}{p}-1) }(\RR^d)$ let $\tilde{T}_{\tilde{G}}$ be the linear functional on $H^p(\RR^d)$ corresponding to $\tilde{G}$ so that there holds 
	\begin{equation*}
	|\tilde{T}_{\tilde{G}}(\tilde{F})|\lesssim \Vert \tilde{F}\Vert_{H^p(\RR^d)} \Vert \tilde{G}\Vert_{\Lambda_{d (\frac{1}{p}-1) }(\RR^d)},\qquad \tilde{F}\in H^p(\RR^d).
	\end{equation*} 
	We choose an extension $G$ of $g$ and define $T_g$ on the whole $H^p(X)$ by $T_g(f)=\tilde{T}_G(F)$ with the notation as above. Hence,
	\begin{equation*}
	|T_g(f)|\lesssim \Vert F\Vert_{H^p(\RR^d)} \Vert G\Vert_{\Lambda_{d (\frac{1}{p}-1) }(\RR^d)}\leq \Vert f\Vert_{H^p(X)} \Vert G\Vert_{\Lambda_{d (\frac{1}{p}-1) }(\RR^d)}.
	\end{equation*} 
	It suffices to take the infimum over $G$ to get the claim.
\end{proof}

One comment is in order here. Note that $T_g$ defined as in the proof of Lemma \ref{lm:H^p/Lipschitz} does not depend on the chosen extension $G$. Indeed, let $G_1$ and $G_2$ be extensions of $g$ to $\RR^d$. Let $F\in H^p(\RR^d)$ be supported in $\bar{X}$. Then there exists a sequence $F_k\in H^p(\RR^d)\cap L^1(\RR^d)$ such that $F_k\to F$ in $H^p(\RR^d)$ and $F_k$ are supported in $\bar{X}$ (see \cite[p.~109]{Stein_HarmonicAnalysis}). In fact, one can choose $F_k$ to be the partial sums of an atomic decomposition of $F$, where the atoms are supported in $X$. Such decomposition exists because $F\in H^p(\RR^d)$ and ${\rm supp}\,F\subset\bar{X}$, and therefore $F\big|_X\in H^p(X)$ which, in the light of the remark we made before, has such decomposition. Now fix $\ve>0$ and choose $N\in\NN$ so that 
\begin{equation*}
\Vert F-F_N\Vert_{H^p(\RR^d)}\leq \frac{\ve}{2\max(\Vert G_1\Vert_{\Lambda_{d (\frac{1}{p}-1) }(\RR^d)},\Vert G_2\Vert_{\Lambda_{d (\frac{1}{p}-1) }(\RR^d)})}.
\end{equation*}
Observe that
\begin{align*}
|T_{G_1}(F)-T_{G_2}(F)|&\leq |T_{G_1}(F_N)-T_{G_2}(F_N)|+|T_{G_1}(F-F_N)|+|T_{G_1}(F-F_N)|\\
&\leq \ve \big(\Vert G_1\Vert_{\Lambda_{d (\frac{1}{p}-1) }(\RR^d)}+\Vert G_2\Vert_{\Lambda_{d (\frac{1}{p}-1) }(\RR^d)} \big),
\end{align*}
since $T_{G_1}(F_N)=T_{G_2}(F_N)$ as $F_N\in L^1(X)$. This justifies that $T_g(f)$ does not depend on the chosen extension $G$. 

\subsection{Main theorem} 

Fix $p\in(0,1]$ and let $\{\fun\}_{n\in\NN^d}$, where $\fun\in \Lambda_{d(\frac{1}{p}-1)}(X)$, be an orthonormal basis in $L^2(X)$. We define the family of operators $\{R_r\}_{r\in(0,1)}$ via
\begin{equation}\label{R_def}
R_r f=\sum_{n\in\NN^d} r^{\ven} \langle f,\fun\rangle \fun,\qquad f\in L^1(X),
\end{equation}
where 
\begin{equation*}
\langle f,\fun\rangle=\int_X f(x)\overline{\fun(x)}\,dx. 
\end{equation*}
Note that the integral makes sense for $f\in L^1(X)$ since $\fun\in L^\infty(X)$ if $p<1$ and $\fun\in BMO(X)$ if $p=1$. We shall apply these operators to the elements of $H^p(X)$. For this purpose we need to give more general meaning to $\langle f,\fun\rangle$. Indeed, it can be defined by the means of Lemma \ref{lm:H^p/Lipschitz}, namely
\begin{equation*}
\langle f,\fun\rangle= T_{\fun}(f).
\end{equation*}
Recall that $T_{\fun}$ defined as in the proof of Lemma \ref{lm:H^p/Lipschitz} is unique (see the comment above). 

Let $R_r$, $r\in(0,1),$ be integral operators for which the associated kernels, denoted by $R_r(x,y)$, belong to $\mathcal{C}^P(X)$ (as functions of $x$, for any $y\in X$) for $P=\lfloor d(p^{-1}-1)\rfloor$, which means that all of their partial derivatives $\partial_{x_i}^j$, $j=0,\ldots,P$, exist and are continuous. Moreover, assume that $R_r(x,y)$ satisfy the following condition: there exist a constant $\gamma>0$ and a finite set $\Delta$ composed of positive numbers $\delta$ strictly greater than $d(p^{-1}-1)-P$, such that for each $k=0,\ldots,P$, there holds 
\begin{align}\label{cond:C}
\tag{C}
\begin{split}
\Big\Vert &R_r(x,\cdot)-\sum_{\ven\leq k} \frac{\partial^{n}_{x} R_r(x',\cdot)}{n_1!\cdot\ldots\cdot n_d!}\prod_{j=1}^d (x_j-x_j')^{n_j}\Big\Vert_{L^2(X)} \lesssim \sum_{\delta\in\Delta} (1-r)^{-(d+2k+2\delta)\gamma}|x-x'|^{k+\delta},
\end{split}
\end{align}
uniformly in $r\in(0,1)$ and $x,x'\in X$ such that $|x-x'|\leq 1/2$.
We emphasise that if $R_r(\cdot,y)$ are in $\mathcal{C}^{P+1}(X)$, then \eqref{cond:C} with $\Delta=\{1\}$ is implied by the easier estimate
\begin{equation*}
\sup_{x\in X} \Vert \partial^{n}_x R_r(x,\cdot)\Vert \lesssim (1-r)^{-(d+2\ven)\gamma},
\end{equation*}
uniformly in $r\in(0,1)$ and for $\ven\leq P+1$. Indeed, it suffices to use Taylor's theorem.

\begin{thm}\label{thm:Hp_general}
	Let $p\in(0,1]$, $s\in[p,2]$, and $X$ be a convex Lipschitz subset of $\RR^d$. Assume that the functions $\{\fun\}_{n\in\NN^d}$ belong to $\Lambda_{d(\frac{1}{p}-1)}(X)$, form an orthonormal basis in $L^2(X)$, and the associated kernels $R_r(x,y)$ satisfy condition \eqref{cond:C} with $\gamma>0$. Then the inequality 
	\begin{equation}\label{eq:0}
	\sum\limits_{n\in\NN^d}\frac{|\langle f,\fun\rangle|^s}{(\ven+1)^E}\lesssim \Vert f\Vert^s_{H^p(X)},
	\end{equation}
	holds uniformly in $f\in H^p(X)$, where
	\begin{equation}\label{Exponent_formula}
	E=\frac{(2-p)sd\gamma}{p}+\frac{(2-s)d}{2}.
	\end{equation}
\end{thm}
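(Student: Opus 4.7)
My plan is to adapt the kernel-based approach from \cite{Plewa_2018_sharpLaguerre_arxiv}, extending it to handle general $p \in (0,1]$ and the extra parameter $s \in [p,2]$. By the atomic decomposition of $H^p(X)$ it suffices to prove the uniform bound
\[
	\sum_{n\in\NN^d} (|n|+1)^{-E} |\langle a, \fun\rangle|^s \lesssim 1
\]
over all $(p,2)$-atoms $a$ supported in balls $B(x_0, \rho) \subset X$; the passage to general $f \in H^p(X)$ then uses subadditivity of $|\cdot|^s$ when $s \in [p,1]$, and Minkowski's inequality in $\ell^s$ with weight $(|n|+1)^{-E}$ together with subadditivity of $x \mapsto x^p$ (for $p\le 1$) when $s \in (1,2]$.

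For the atomic estimate, the Beta-function identity $(|n|+1)^{-E} \simeq \int_0^1 r^{|n|} (1-r)^{E-1}\,dr$ recasts the sum as $\int_0^1 (1-r)^{E-1} \sum_n r^{|n|} |\langle a, \fun\rangle|^s\,dr$. H\"older's inequality with exponents $2/s$ and $2/(2-s)$, Parseval's formula, and $\sum_n r^{|n|} = (1-r)^{-d}$ yield the pointwise-in-$r$ estimate
\[
\sum_n r^{|n|}|\langle a,\fun\rangle|^s \le \|R_{\sqrt r}\, a\|_{L^2(X)}^{s} \cdot (1-r)^{-d(1-s/2)}.
\]
The norm $\|R_{\sqrt r} a\|_{L^2}$ is controlled in two complementary ways. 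Trivially, $\|R_{\sqrt r} a\|_{L^2} \le \|a\|_{L^2} \lesssim \rho^{d(1/2-1/p)}$ since $R_{\sqrt r}$ is a contraction on $L^2$. In the refined estimate, subtracting from $R_{\sqrt r}(x,y)$ its Taylor polynomial of order $P$ in the $y$-variable around $x_0$ (which is killed on integration against $a$), applying Minkowski's integral inequality in $L^2_x$ and condition \eqref{cond:C} with the roles of $x$ and $y$ swapped (legitimate by $R_r(x,y)=\overline{R_r(y,x)}$ and $1-\sqrt r \simeq 1-r$), and using $\|a\|_{L^1} \lesssim \rho^{d(1-1/p)}$, one arrives at
\[
	\|R_{\sqrt r} a\|_{L^2} \lesssim \sum_{\delta \in \Delta} (1-r)^{-(d+2P+2\delta)\gamma} \rho^{\delta-(d(1/p-1)-P)},
\]
valid whenever $\rho \le 1/2$; atoms with $\rho > 1/2$ are handled by the trivial bound alone, which is automatically $O(1)$ since then $\rho$ lies in a bounded interval.

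The final move is to split $\int_0^1 \cdot\, dr$ at the crossover scale $(1-r) = \rho^{1/(2\gamma)}$: apply the refined bound on the region $(1-r) > \rho^{1/(2\gamma)}$ and the trivial bound on $(1-r) < \rho^{1/(2\gamma)}$. Explicit evaluation of the two resulting power-function integrals in $u=1-r$ (absorbing the finite sum over $\delta$ into constants) shows both pieces are $O(1)$ uniformly in $\rho$ precisely when $E = (2-p)sd\gamma/p + (2-s)d/2$, which is the theorem's exponent.

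The crux of the argument---and the point where \eqref{cond:C} is used delicately---is this final balancing. The $\rho$-exponent produced by the refined bound and the one produced by the length of the $(1-r)$-interval must cancel exactly on each subintegral; this forces both the split at $\rho^{1/(2\gamma)}$ and the precise formula for $E$. The strict inequality $\delta > d(1/p-1)-P$ encoded in \eqref{cond:C} provides the positive slack $\tau_\delta := \delta - (d(1/p-1)-P) > 0$ that keeps the refined integral convergent at $u=1$, while the factor $1/(2\gamma)$ in the split scale encodes how kernel derivative estimates translate into the admissible exponent $E$.
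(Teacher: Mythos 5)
Your estimate for a single atom is correct and is, in all essentials, the paper's own argument: the same Beta-function identity (the paper writes it with $r^{2\ven}$, you with $r^{\ven}$, which merely replaces $R_r$ by $R_{\sqrt{r}}$ and is harmless since $1-\sqrt{r}\simeq 1-r$), the same H\"older--Parseval step producing the factor $(1-r)^{-(2-s)d/2}\Vert R_{\sqrt{r}}\,a\Vert^s_{L^2(X)}$, the same two complementary bounds on $\Vert R_{\sqrt{r}}\,a\Vert_{L^2(X)}$ (the $L^2$ contraction versus the Taylor-remainder bound from \eqref{cond:C} combined with the vanishing moments of $a$), and the same crossover scale $1-r=\rho^{1/(2\gamma)}\simeq|B|^{1/(2d\gamma)}$; your exponent bookkeeping checks out, and your handling of the restriction $|x-x'|\le 1/2$ in \eqref{cond:C} by splitting at $\rho=1/2$ is if anything slightly cleaner than the paper's split at $|B|=1$.

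The genuine gap is in the sentence ``By the atomic decomposition of $H^p(X)$ it suffices to prove the uniform bound over atoms.'' Elements of $H^p(X)$ for $p\le 1$ are distributions, so $\langle f,\fun\rangle$ is not given by an integral and must first be \emph{defined}; the paper defines it as $T_{\fun}(f)$ via the duality between $H^p(X)$ and the Lipschitz space $\Lambda_{d(\frac{1}{p}-1)}(X)$ (Lemma \ref{lm:H^p/Lipschitz}). This is the only place where the hypothesis $\fun\in\Lambda_{d(\frac{1}{p}-1)}(X)$ enters, and your proposal never invokes that hypothesis --- a sign that the step is missing. Moreover, even once the pairing is defined, a uniform bound over atoms does not by itself transfer to all of $H^p(X)$: it is a known subtlety that a functional can be uniformly bounded on atoms and yet fail to extend boundedly to the whole Hardy space. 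One must prove that $\langle f_J,\fun\rangle\to\langle f,\fun\rangle$, where $f_J$ are the partial sums of an atomic decomposition of $f$. The paper does exactly this: it shows that $\{\langle f_J,\fun\rangle\}_{n}$ is Cauchy in $\ell^s\big((\ven+1)^{-E}\big)$ using the atomic bound, and then identifies the limit with $\{\langle f,\fun\rangle\}_{n}$ through the duality estimate $|\langle f-f_J,\fun\rangle|\lesssim\Vert f-f_J\Vert_{H^p(X)}\Vert\fun\Vert_{\Lambda_{d(\frac{1}{p}-1)}(X)}$. Your subadditivity/Minkowski aggregation (which does match the paper's case split $s\in[p,1]$ versus $s\in[1,2]$) is the right tool for summing the atomic bounds, but it can only be applied after this identification; as written, the reduction to atoms is asserted rather than proved.
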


We remark that the above parameter $\gamma$ is not the same as $\gamma$ in \cite[Theorem~2.2]{Plewa_2018_sharpLaguerre_arxiv}; in fact if in the cited theorem $\mu$ is Lebesgue measure (and hence $N=d$), then both $\gamma$'s are equal up to the multiplicative constant $(d+2)$.

\begin{proof}
Fix $p$ and $s$ as in the claim. Firstly, we will prove the theorem for $(p,q)$-atoms, $q\in[2,\infty]$, and then we shall justify that it holds for all $f\in H^p(X)$. Let $a$ be a $(p,q)$-atom supported in a ball $B$ with the centre in $x'$. The following computation does not depend on $a$. Similarly as in \cite{Plewa_2018_sharpLaguerre_arxiv} and \cite{LiYuShi_2015_JFAA} in the first step we use an asymptotic estimate for the Beta function obtaining
\begin{align*}
\sum_{n\in\NN^d}\frac{\vert\langle  a, \fun\rangle\vert^s}{(\ven+1)^{E}}&\lesssim\sum_{n\in\NN^d}\int_0^1 r^{2\ven}(1-r)^{E-1}\vert\langle a, \fun\rangle\vert^s \,dr\\
&\leq\int_0^1 (1-r)^{E-1}\Big(\sum_{n\in\NN^d}r^{2\ven}\Big)^{\frac{2-s}{2}}\Big(\sum_{n\in\NN^d}\big(r^{s\ven}\vert\langle a,\fun\rangle\vert^s\big)^{\frac{2}{s}}\Big)^{\frac{s}{2}}\,dr\\
&\lesssim\int_0^1 (1-r)^{E-1}(1-r)^{-\frac{(2-s)d}{2}}\Vert R_r a\Vert^s_{L^2(X)}\,dr\\
&=\int_0^1 (1-r)^{\frac{(2-p)sd\gamma}{p}-1}\Vert R_r a\Vert^s_{L^2(X)}\,dr.
\end{align*}

Observe that
\begin{equation*}
\Vert R_r a\Vert^s_{L^2(X)}\leq  \Vert a\Vert^s_{L^2(X)}\leq |B|^{\big(\frac{1}{2}-\frac{1}{p}\big)s}.
\end{equation*}
Thus, the claim holds for $|B|\geq 1$. On the other hand, by \eqref{cond:C} we have
\begin{align*}
\Vert R_r a\Vert^s_{L^2(X)}&=\Big( \int_X \Big|\int_B R_r(x,y) a(x)\,dx\Big|^2 dy\Big)^{\frac{s}{2}}\\
&=\Big( \int_X \Big|\int_B \Big(R_r(x,y)-\sum_{\stackrel{i_1,\ldots,i_d\geq 0}{i_1+\ldots + i_d\leq P}} \frac{k!\, \partial^{i_1}_{x_1}\ldots \partial^{i_d}_{x_d} R_r(x',y)}{i_1! \cdot\ldots\cdot i_d!}\prod_{j=1}^d (x_j-x_j')^{i_j} \Big)\\
&\qquad \times a(x)\,dx\Big|^2 dy\Big)^{\frac{s}{2}}\\
&\lesssim \Big(\sum_{\delta\in\Delta} \int_B |a(x)| |x-x'|^{P+\delta} (1-r)^{-(d+2P+2\delta)\gamma} \,dx\Big)^s\\
&\lesssim \sum_{\delta\in\Delta}(1-r)^{-s(d+2P+2\delta)\gamma}|B|^{s(\frac{P+\delta}{d}+1-\frac{1}{p})}.
\end{align*}
Note that by the definition of $P$ and $\Delta$ there is $\frac{P+\delta}{d} +1-\frac{1}{p}> 0$ for every $\delta\in\Delta$. Hence,
\begin{align*}
\int_0^1 &\Vert R_r a\Vert^s_{L^2(X)}(1-r)^{\frac{(2-p)sd\gamma}{p}-1}dr\\
&\lesssim \sum_{\delta\in\Delta}\int_0^{1-|B|^{\frac{1}{2d\gamma}}} |B|^{s(\frac{P+\delta}{d}+1-\frac{1}{p})} (1-r)^{\frac{(2-p)sd\gamma}{p}-1-s\gamma(d+2P+2\delta)}dr\\
&+\int_{1-|B|^{\frac{1}{2d\gamma}}}^1 |B|^{s\big(\frac{1}{2}-\frac{1}{p}\big)}(1-r)^{\frac{(2-p)sd\gamma}{p}-1}dr\\
&\lesssim 1,
\end{align*}
uniformly in $B$ such that $|B|\leq 1$. This finishes the proof of the theorem for the atoms.

To complete the proof let us now justify that the claim holds for any $f\in H^p(X)$. Fix $f\in H^p(X)$ and its atomic decomposition $f=\sum_{j\in\NN} \lambda_j a_j$. Denote $f_J=\sum_{j=0}^J \lambda_j a_j$. We show that $\big\{ \{\langle f_J,\fun\rangle\}_{n\in\NN^d} \big\}_{J\in\NN}$ is a Cauchy sequence in $\ell^s\big( (\ven+1)^{-E}\big)$. Indeed, for $J>I$ we have for $s\in[p,1]$
\begin{align*}
\sum_{n\in\NN^d} \frac{|\langle f_J-f_I,\fun\rangle|^s}{(\ven+1)^E}\leq \sum_{j=I+1}^J |\lambda_j|^s\sum_{n\in\NN^d} \frac{|\langle a_j,\fun\rangle|^s}{(\ven+1)^E}\lesssim \sum_{j=I+1}^J |\lambda_j|^s\leq \Big(\sum_{j=I+1}^J |\lambda_j|^p\Big)^{s/p}.
\end{align*}
Since $\ell^s\big( (\ven+1)^{-E}\big)$ is complete with this metric we have shown that $\big\{ \{\langle f_J,\fun\rangle\}_{n\in\NN^d} \big\}_{J\in\NN}$ is a Cauchy sequence. Moreover, for $s\in[1,2]$ we use Minkowski's inequality and get
\begin{align*}
\Big(\sum_{n\in\NN^d} \frac{|\langle f_J-f_I,\fun\rangle|^s}{(\ven+1)^E}\Big)^{1/s}\leq \sum_{j=I+1}^J |\lambda_j|\Big(\sum_{n\in\NN^d} \frac{|\langle a_j,\fun\rangle|^s}{(\ven+1)^E}\Big)^{1/s}&\lesssim \sum_{j=I+1}^J |\lambda_j|\\
&\leq \Big(\sum_{j=I+1}^J |\lambda_j|^p\Big)^{1/p},
\end{align*}
and thus the considered sequence is a Cauchy sequence for this range of the parameter $s$ as well. Therefore, there exists $\{a_n\}_{n\in\NN^d}\in \ell^s \big( (\ven+1)^{-E}\big)$ such that
\begin{equation*}
\lim_{J\to\infty} \sum_{n\in\NN^d} \frac{|\langle f_J,\fun\rangle-a_n|^s}{(\ven+1)^E}=0.
\end{equation*}

We will justify that $a_n=\langle f,\fun\rangle$. The above equality yields
\begin{equation*}
\lim_{J\to\infty} \sum_{n\in\NN^d} \frac{|\langle f_J,\fun\rangle-a_n|^s}{(\ven+1)^{d+E}(1+\Vert \fun\Vert^s_{\Lambda_{d(\frac{1}{p}-1)}(X)}) }=0.
\end{equation*}
On the other hand, by Lemma \ref{lm:H^p/Lipschitz}
\begin{align*}
\lim_{J\to\infty} \sum_{n\in\NN^d} \frac{|\langle f_J-f,\fun\rangle|^s}{(\ven+1)^{d+E}(1+\Vert \fun\Vert_{\Lambda_{d(\frac{1}{p}-1)}(X)}) }\leq  \lim_{J\to\infty}\sum_{n\in\NN^d} \frac{\Vert f_J-f\Vert_{H^p(X)}^s \Vert \fun\Vert^s_{\Lambda_{d(\frac{1}{p}-1)}(X)}}{(\ven+1)^{d+E}(1+\Vert \fun\Vert^s_{\Lambda_{d(\frac{1}{p}-1)}(X)}) },
\end{align*}
and the latter limit is equal to zero. Hence, by the uniqueness of the limit we justified that $a_n=\langle f,\fun\rangle$.

Finally, fix $\ve>0$ and $J\in\NN$ such that $\Vert \langle f_J-f,\fun \rangle\Vert^s_{\ell^s((\ven+1)^{-E})}<\ve$. We estimate for $s\in[p,1]$
\begin{align*}
\sum_{n\in\NN^d} \frac{|\langle f,\fun\rangle|^s}{(\ven+1)^E}&\leq \sum_{n\in\NN^d} \frac{|\langle f-f_J,\fun\rangle|^s}{(\ven+1)^E}+\sum_{n\in\NN^d} \frac{|\langle f_J,\fun\rangle|^s}{(\ven+1)^E}\\
&\leq \ve + \sum_{j=0}^J |\lambda_j|^s \sum_{n\in\NN^d} \frac{|\langle a_j,\fun\rangle|^s}{(\ven+1)^E}\\
&\lesssim \ve + \Big(\sum_{j=0}^J |\lambda_j|^p\Big)^{s/p}\\
&\lesssim \ve + \Vert f\Vert^s_{H^p(X)}.
\end{align*}
If $s\in[1,2]$, then we proceed as before using Minkowski's inequality. This finishes the proof of the theorem.
\end{proof}

\subsection{Sharpness}
In this subsection we prove that the admissible exponent in Theorem \ref{thm:Hp_general} cannot be lowered, provided that we pose some additional assumptions on the basis $\{\fun\}_{n\in\NN^d}$. In fact, we focus only on the case $\fun(x)=\prod_{i=1}^d \funi(x_i)$. Therefore, we state our results in the one-dimensional situation and then make an appropriate remark on the general case $d\geq 1$.

We remark that although conditions \eqref{eq:17} and \eqref{eq:24} may look hard to meet, they turn out to be very natural in the classical orthonormal basis, such as Laguerre, Hermite, or Jacobi function expansions. 

Firstly, we construct a one-dimensional auxiliary atom $a$. Let $p\in (0,1]$, $P=\lfloor p^{-1}-1\rfloor$, $A\geq 1$ and $0<\delta\leq \frac{1}{2(P+1)}$. Consider the following function
\begin{equation}\label{eq:def_a}
a(u)=2^{-(P+2)} A^{1/p}\left\{
\begin{array}{ll}
-1, &u\in (0,\delta A^{-1}),\\
 C_j & u\in (j\delta A^{-1}, (j+1)\delta A^{-1}),\ j=1,\ldots,P,\\
 C_{P+1}& u\in ((P+1)\delta A^{-1}, A^{-1}),
\end{array}
\right.
\end{equation}
where $C_i$ are some constants to be determined. Note that if $|C_i|\leq 2^{P+2}$, then the estimate $\Vert a\Vert _{L^\infty}\leq |B|^{-1/p}$, where $B=(0,A^{-1})$, would follow.
Hence, if $C_i$ are satisfying this bound and are such that $\int u^k a(u)\,du=0$, $k=0,\ldots, P$, then $a$ is a $(p,\infty)$-atom. 

Observe that by the equality
\begin{equation*}
\int_{i\delta A^{-1}}^{(i+1)\delta A^{-1}} u^k\, du = \frac{1}{k+1}A^{-k-1} \delta^{k+1}((i+1)^{k+1}-i^{k+1}),\qquad k,i=0,\ldots, P,  
\end{equation*}
the cancellation properties come down to 
\begin{equation}\label{eq:34}
\sum_{i=1}^{P} C_i \delta^{k+1}((i+1)^{k+1} -i^{k+1})+C_{P+1}(1-((P+1)\delta)^{k+1})=\delta^{k+1},\qquad k=0,\ldots, P.
\end{equation}
This is a system of linear equations on $C_1,\ldots,C_{P+1}$ and one can solve it using Cramer's rule. A calculation shows that
\begin{equation*}
C_i=\sum_{\ell=0}^i {P+1 \choose \ell} (-1)^{\ell-1} \frac{1}{1-\ell\delta},\qquad i=1,\ldots,P+1. 
\end{equation*}
Indeed, inserting this into left hand side of \eqref{eq:34} we obtain
\begin{align*}
\delta^{k+1}\sum_{i=1}^{P} &\big((i+1)^{k+1}-i^{k+1}\big) \Big(\sum_{\ell=0}^i  {P+1 \choose \ell} (-1)^{\ell-1} \frac{1}{1-\ell\delta}\Big)\\
&\qquad +(1-((P+1)\delta)^{k+1})\Big(\sum_{\ell=0}^{P+1}  {P+1 \choose \ell} (-1)^{\ell-1} \frac{1}{1-\ell\delta}\Big)\\
&=\delta^{k+1} +\sum_{\ell=0}^{P+1} {P+1 \choose \ell} (-1)^{\ell-1} \frac{1-(\ell\delta)^{k+1}}{1-\ell\delta}\\
&=\delta^{k+1} +\sum_{j=0}^k \delta^j \sum_{\ell=0}^{P+1} {P+1 \choose \ell} (-1)^{\ell-1} \ell^j.
\end{align*}
Observe that for each $j$ the inner sum vanishes since $k\leq P$ and hence \eqref{eq:34} holds.

Now we clearly see that $|C_i|\leq 2^{P+2}$, $i\in\{1,\ldots,P+1\}$. Moreover, notice that
\begin{align*}
C_{P+1}= \sum_{\ell=0}^{P+1} {P+1 \choose \ell} (-1)^{\ell-1} \frac{1}{1-\ell\delta}=(-1)^{P} \int_{0}^1 (u^{-\delta}-1)^{P+1}\,du.
\end{align*}
Now, since
\begin{equation*}
(-\log u)\leq\frac{u^{-\delta}-1}{\delta}\leq (-\log u) u^{-1/(2P+2)},\qquad u\in(0,1),\ \delta\in (0,(2P+2)^{-1}),
\end{equation*}
it is easily seen that
\begin{equation}\label{eq:35}
|C_{P+1}|\simeq \delta^{P+1},\qquad \delta\in (0,(2P+2)^{-1}).
\end{equation}

To sum up, the function $a$ defined in \eqref{eq:def_a} is a $(p,\infty)$-atom.

\begin{prop}\label{prop:general_sharpness}
	Let the one-dimensional version of the assumptions of Theorem \ref{thm:Hp_general} be satisfied. Moreover, we assume that $(0,c)\subset X$ for some $c>0$ and that there exists $\tau> \frac{4\gamma-2p\gamma-p}{2p}$ such that for some $0<m\leq M$
	   \begin{equation}\label{eq:17}
	   m (k+1)^\tau u^{\frac{1+2\tau -2\gamma}{4\gamma}}\leq |\funk (u)|\leq M (k+1)^\tau u^{\frac{1+2\tau -2\gamma}{4\gamma}},
	   \end{equation}
	   uniformly in $u\in (0,cK^{-2\gamma})$, $k\leq K$ and $K\in\NN_+$, and $\funk(u)$ does not change the sign in this interval. Then the admissible exponent in \eqref{eq:0} cannot be lowered. 
\end{prop}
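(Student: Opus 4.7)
The plan is a contradiction argument. Suppose \eqref{eq:0} held with some exponent $E'<E$; I will construct a sequence of $(p,\infty)$-atoms $\{a_K\}_{K\in\NN_+}$ for which the left-hand side of \eqref{eq:0} is unbounded while the right-hand side stays $\lesssim 1$. For each $K$ take $a_K$ to be the piecewise constant atom from \eqref{eq:def_a} with $A=A_K\simeq K^{2\gamma}$ chosen so that its support $(0,A_K^{-1})$ lies in $(0,cK^{-2\gamma})$, and with a parameter $\delta\in(0,\tfrac{1}{2(P+1)})$ to be fixed later, independently of $K$. The crucial use of \eqref{eq:17} is that on $(0,A_K^{-1})$ the function $\funk$ keeps a constant sign for every $k\leq K$ and satisfies $m(k+1)^\tau u^\sigma\leq|\funk(u)|\leq M(k+1)^\tau u^\sigma$ with $\sigma:=(1+2\tau-2\gamma)/(4\gamma)$. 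Since $|\langle a_K,\funk\rangle|$ is insensitive to the sign of $\funk$, I may assume $\funk>0$ on the support.

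Next, split the pairing into pieces dictated by the piecewise definition of $a_K$:
\begin{equation*}
\langle a_K,\funk\rangle=2^{-(P+2)}A_K^{1/p}\sum_{j=0}^{P+1} C_j\int_{I_j}\funk(u)\,du,
\end{equation*}
where $C_0=-1$ and $I_0,\ldots,I_{P+1}$ are the successive subintervals of $(0,A_K^{-1})$. The sandwich bound on $\funk$ together with \eqref{eq:35} gives
\begin{equation*}
\Big|C_{P+1}\int_{I_{P+1}}\!\funk\Big|\gtrsim \delta^{P+1}(k+1)^\tau A_K^{-\sigma-1},\qquad \sum_{j=0}^P \Big|C_j\int_{I_j}\!\funk\Big|\lesssim \delta^{\sigma+1}(k+1)^\tau A_K^{-\sigma-1}.
\end{equation*}
The key algebraic point is that the hypothesis $\tau>(4\gamma-2p\gamma-p)/(2p)$ is exactly equivalent to $\sigma>(1-p)/p\geq P$; in particular $\sigma>P$ strictly, so $\delta^{\sigma+1}=o(\delta^{P+1})$ as $\delta\to 0$, and one can fix $\delta$ small enough (depending only on $p,\gamma,\tau,m,M$) so that the $C_{P+1}$ contribution is at least twice the sum of the others in absolute value. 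This yields, uniformly for $0\leq k\leq K$,
\begin{equation*}
|\langle a_K,\funk\rangle|\gtrsim (k+1)^\tau A_K^{1/p-\sigma-1}\simeq (k+1)^\tau K^{2\gamma(1/p-\sigma-1)}.
\end{equation*}

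Finally, summing $s$-th powers, a short calculation using the identity $2\gamma(1/p-\sigma-1)=(2-p)\gamma/p-1/2-\tau$ shows that the exponent of $K$ in
\begin{equation*}
\sum_{k=0}^K\frac{|\langle a_K,\funk\rangle|^s}{(k+1)^{E'}}\gtrsim K^{2s\gamma(1/p-\sigma-1)}\sum_{k=0}^K (k+1)^{s\tau-E'}
\end{equation*}
equals precisely $E-E'$; the inner sum behaves like $K^{s\tau-E'+1}$ because the lower bound on $\tau$ also forces $s\tau+1>E>E'$. Hence the displayed quantity tends to $\infty$ as $K\to\infty$, while $\Vert a_K\Vert_{H^p(X)}^s\leq 1$. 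This contradicts \eqref{eq:0} and proves the proposition. The main technical obstacle, and the reason \eqref{eq:17} is stated so precisely, is the competition $\delta^{\sigma+1}$ versus $\delta^{P+1}$: the atom $a_K$ annihilates polynomials of degree $\leq P$ but not $u^\sigma$, and only the strict inequality $\sigma>P$ (provided by the lower bound on $\tau$) allows the long tail interval $I_{P+1}$ to produce a non-negligible contribution.
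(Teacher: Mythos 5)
Your proposal is correct and is essentially the paper's own argument: the same piecewise-constant atoms $a_K$ from \eqref{eq:def_a} with $A\simeq K^{2\gamma}$, the same sandwich bound \eqref{eq:17} combined with $|C_{P+1}|\simeq\delta^{P+1}$ and the strict inequality $\sigma>P$ to make the tail interval dominate for small $\delta$, and the same final summation yielding growth $K^{E-E'}$. The only difference is presentational (a contradiction with a hypothetical $E'<E$ versus the paper's direct estimate $\sum_k|\langle a_K,\funk\rangle|^s(k+1)^{-(E-\ve)}\gtrsim K^{\ve}$), which is an equivalent formulation.
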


\begin{proof}
 	In order to prove this lemma we will construct an explicit sequence of atoms $a_K$, such that for $E$ defined in \eqref{Exponent_formula} and any $\ve>0$
 	\begin{equation}\label{eq:15}
 	\sum\limits_{k\in\NN}\frac{|\langle a_K,\funk\rangle|^s}{(k+1)^{E-\ve}}\gtrsim K^\ve,\qquad K\in\NN_+.
 	\end{equation}
 	 	
 	Let $K\in\NN_+$ and $a_K$ be an atom defined in \eqref{eq:def_a} with $A=K^{2\gamma}/c$ and some sufficiently small $\delta$. We will show that
\begin{equation}\label{eq:16}
|\langle a_K,\funk\rangle | \gtrsim (k+1)^\tau K^{2\gamma/p -1/2-\tau-\gamma}, \qquad 0\leq k\leq K.
\end{equation}
This suffices to prove \eqref{eq:15}. Indeed, since $\tau> (4\gamma-2p\gamma-p)/(2p)$, we have
\begin{align*}
\sum_{k\in\NN} \frac{|\langle a_K, \funk\rangle|^s}{(k+1)^{\frac{(2-p)s\gamma}{p}+\frac{s-2}{2}-\ve}}\gtrsim K^{2s\gamma/p -s/2-s\tau-s\gamma} \sum_{k=1}^K k^{\tau s-\frac{(2-p)s\gamma}{p}-\frac{2-s}{2}+\ve}\gtrsim K^\ve.
\end{align*}

Let us now justify \eqref{eq:16}. We have
\begin{equation*}
 \int_0^{cK^{-2\gamma}} a(u) \funk(u)\,du =\int_{(P+1)\delta cK^{-2\gamma}}^{cK^{-2\gamma}} a(u)\funk(u)\,du+\sum_{i=0}^P \int_{i\delta c K^{-2\gamma}}^{(i+1)\delta c K^{-2\gamma}} a(u)\funk (u)\,du.
\end{equation*}
Thus, the absolute value of the quantity above is bounded from below by
\begin{align*}
 2^{-(P+2)}&\Big(\frac{K^{2\gamma}}{c}\Big)^{\frac{1}{p} -\frac{1+2\tau+2\gamma}{4\gamma}}\frac{4M\gamma}{1+2\gamma+2\tau} (k+1)^\tau \Big(  |C_{P+1} |\frac{m}{M} \big(1 -((P+1)\delta)^{\frac{1+2\tau+2\gamma}{4\gamma}}\big)\\ &\qquad-\delta^{\frac{1+2\tau+2\gamma}{4\gamma}} \sum_{i=0}^P |C_i| \big((i+1)^{{\frac{1+2\tau+2\gamma}{4\gamma}}}-i^{\frac{1+2\tau+2\gamma}{4\gamma}}  \big)\Big)\\
&\gtrsim (k+1)^\tau K^{2\gamma/p -1/2-\tau-\gamma}\delta^{P+1}\Big( \frac{|C_{P+1}|}{\delta^{P+1}}  \frac{m}{M} \big(1 -((P+1)\delta)^{\frac{1+2\tau+2\gamma}{4\gamma}}\big)\\
&\qquad -\delta^{\frac{1+2\tau+2\gamma}{4\gamma}-(P+1)} \sum_{i=0}^P |C_i| \big((i+1)^{{\frac{1+2\tau+2\gamma}{4\gamma}}}-i^{\frac{1+2\tau+2\gamma}{4\gamma}}  \big)  \Big).
\end{align*}
Observe that taking $\delta$ sufficiently small, by the restraint for $\tau$ and \eqref{eq:35}, we obtain \eqref{eq:16}. 
\end{proof}

\begin{rem}\label{rem:sharpness}
	Notice that if $p^{-1}$ is not an integer and $\funk$ satisfy \eqref{eq:17} with $\tau=\frac{4\gamma-2p\gamma-p}{2p}$, then \eqref{eq:16} is also true. This implies for large $K$ the estimate
	\begin{equation*}
	\sum_{k\in\NN} \frac{|\langle a_K,\funk\rangle|}{(k+1)^E}\gtrsim \log K.
	\end{equation*}
	Hence, \eqref{eq:0} does not hold. But the case $p^{-1}\in\NN_+$ or $\tau>\frac{4\gamma-2p\gamma-p}{2p}$ Hardy's inequality may be valid, see Proposition \ref{prop:general_sharpness_2}.
\end{rem}

Sometimes condition \eqref{eq:17} holds with $\tau\leq \frac{4\gamma -2p\gamma-p}{2p}$ and hence Proposition \ref{prop:general_sharpness} cannot be applied in order to prove sharpness. However, estimate \eqref{eq:17} can be replaced by its analogue for the derivatives of $\funk$. We describe this situation in the following proposition.

\begin{prop}\label{prop:general_sharpness_2}
	Let the one-dimensional version of the assumptions of Theorem \ref{thm:Hp_general} be satisfied. Moreover, we assume that $(0,c)\subset X$ for some $c>0$, $\funk$ are $(P+1)$-times differentiable, where $P=\lfloor p^{-1}-1\rfloor$, and that there exists $\tau> (4\gamma-2p\gamma-p)/(2p)$ such that for some $0<m\leq M$ there holds
	\begin{equation}\label{eq:24}
	m (k+1)^{\tau} u^{\frac{1+2\tau -2\gamma}{4\gamma}-(P+1)}\leq |\funk^{(P+1)} (u)|\leq M (k+1)^{\tau} u^{\frac{1+2\tau -2\gamma}{4\gamma}-(P+1)},
	\end{equation}
	uniformly in $u\in (0,cK^{-2\gamma})$, $k\leq K$ and $K\in\NN_+$, and $\funk^{(P+1)}(u)$ does not change the sign in this interval. Then the admissible exponent in \eqref{eq:0} cannot be lowered. 
\end{prop}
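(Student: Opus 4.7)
The plan is to reduce to Proposition \ref{prop:general_sharpness} by integrating \eqref{eq:24} $P+1$ times against the same atoms $a_K$ from \eqref{eq:def_a} with $A=K^{2\gamma}/c$. The key observation is that the vanishing moments of $a_K$ of orders $0,\ldots,P$ allow us to replace $\funk$ in the pairing $\langle a_K,\funk\rangle$ by its $P$-th order Taylor remainder, and the hypothesis \eqref{eq:24} on $\funk^{(P+1)}$ is exactly what is needed to control this remainder in a way that mimics \eqref{eq:17}.

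Concretely, for $u\in(0,cK^{-2\gamma})$ I would set
\begin{equation*}
\Psi_k(u):=\frac{1}{P!}\int_0^u (u-t)^P \funk^{(P+1)}(t)\,dt,
\end{equation*}
so that $\Psi_k^{(P+1)}=\funk^{(P+1)}$ and therefore $\funk-\Psi_k$ is a polynomial of degree at most $P$ on $(0,cK^{-2\gamma})$. The cancellation properties of $a_K$ then yield $\langle a_K,\funk\rangle=\int a_K\Psi_k$. Setting $\sigma:=\frac{1+2\tau-2\gamma}{4\gamma}$, the assumption $\tau>\frac{4\gamma-2p\gamma-p}{2p}$ together with $P+1\leq 1/p$ (a consequence of $P=\lfloor p^{-1}-1\rfloor$) forces $\sigma>P$, which is just enough to ensure the convergence of the Beta-type integral $\int_0^u (u-t)^P t^{\sigma-P-1}\,dt=u^\sigma\, B(P+1,\sigma-P)$. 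Inserting \eqref{eq:24} into the definition of $\Psi_k$ produces the two-sided estimate
\begin{equation*}
m'(k+1)^\tau u^\sigma\leq |\Psi_k(u)|\leq M'(k+1)^\tau u^\sigma,\qquad u\in(0,cK^{-2\gamma}),
\end{equation*}
with $\Psi_k$ of constant sign on this interval (inherited from $\funk^{(P+1)}$).

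These are exactly the pointwise bounds \eqref{eq:17} for $\Psi_k$, so the remaining steps---splitting $\int a_K\Psi_k$ over the subintervals of $a_K$ described in \eqref{eq:def_a}, isolating the last block using $|C_{P+1}|\simeq\delta^{P+1}$ from \eqref{eq:35}, and taking $\delta$ sufficiently small---can be copied verbatim from the proof of Proposition \ref{prop:general_sharpness} to deliver $|\langle a_K,\funk\rangle|\gtrsim (k+1)^\tau K^{2\gamma/p-1/2-\tau-\gamma}$, i.e.\ \eqref{eq:16}; then \eqref{eq:15} follows as before. The main technical hurdle I anticipate is the verification $\sigma>P$: this is precisely the threshold at which the Taylor remainder both converges at the origin and still exhibits the power law $u^\sigma$ needed to reproduce the lower bound of Proposition \ref{prop:general_sharpness}, and it is ultimately the arithmetic reason that the same strict bound $\tau>\frac{4\gamma-2p\gamma-p}{2p}$ appears in both statements.
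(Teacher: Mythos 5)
Your argument is correct, and it rests on the same two pillars as the paper's own proof: the vanishing moments of the atom $a_K$ from \eqref{eq:def_a} let you discard a degree-$P$ Taylor polynomial of $\funk$, and hypothesis \eqref{eq:24} then controls the remainder. The difference lies in the execution. The paper expands $\funk$ around the interior point $(P+1)\delta/A$ and uses the Lagrange form of the remainder, $\frac{1}{(P+1)!}\funk^{(P+1)}(\xi_u)\big(u-\frac{(P+1)\delta}{A}\big)^{P+1}$, and then redoes by hand the splitting of the integral over the blocks of $a_K$, balancing the last block (via $|C_{P+1}|\simeq\delta^{P+1}$) against the others while keeping track of the intermediate point $\xi_u$. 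You instead expand around the origin with the integral form of the remainder, so that the Beta-function identity $\int_0^u(u-t)^Pt^{\sigma-P-1}\,dt=u^{\sigma}B(P+1,\sigma-P)$ (convergent precisely because $\sigma>P$, which you correctly derive from $\tau>\frac{4\gamma-2p\gamma-p}{2p}$) converts \eqref{eq:24} into the two-sided bound \eqref{eq:17} for $\Psi_k$, with the sign condition inherited from $\funk^{(P+1)}$; since $\langle a_K,\funk\rangle=\langle a_K,\Psi_k\rangle$, this reduces Proposition \ref{prop:general_sharpness_2} to Proposition \ref{prop:general_sharpness} applied verbatim to $\Psi_k$. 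Your route is more modular and avoids the paper's somewhat delicate lower-bounding of $\funk^{(P+1)}(\xi_u)$ on the two regions; it does require observing that the proof of Proposition \ref{prop:general_sharpness} uses nothing about the function paired with $a_K$ beyond the two-sided bound and the sign condition on $(0,cK^{-2\gamma})$, which is indeed the case, so the reduction is legitimate. Both arguments deliver the same lower bound \eqref{eq:16} and hence \eqref{eq:15}.
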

\begin{proof}
	Fix $p\in(0,1]$ and set $P=\lfloor p^{-1}-1 \rfloor$. Let $K\in\NN$ and  $a_K$ be the same $H^p(X)$ atom as in Proposition \ref{prop:general_sharpness}. We show \eqref{eq:15}. Observe that denoting $A=K^{2\gamma}/c$ we have for some $\xi_u$ between $u$ and $(P+1)\delta/A$ the following equality 
	\begin{align*}
	\int_0^{A^{-1}} a(u) \funk(u) \,du &= \int_0^{A^{-1}} a(u)\Big( \funk(u)-\sum_{i=0}^P \frac{\funk^{(i)}\big(\frac{(P+1)\delta}{A}\big)}{i!} \big(u-\frac{(P+1)\delta}{A} \big) \Big)\,du\\
	&= \int_0^{A^{-1}} a(u) \frac{1}{(P+1)!} \funk^{(P+1)}(\xi_u) \Big( u-\frac{(P+1)\delta}{A}\Big)^{P+1} \,du.
	\end{align*}
	The absolute value of the latter integral can be estimated from below by
	\begin{align*}
	&\int_{\frac{(P+1)\delta}{A}}^{A^{-1}} 2^{-(P+2)} |C_{P+1}| A^{1/p} \frac{m}{(P+1)!} (k+1)^{\tau} \xi_u^{\frac{1+2\tau -2\gamma}{4\gamma}-(P+1)}  \Big( u-\frac{(P+1)\delta}{A}\Big)^{P+1}\,du\\
	&\qquad-\int_0^{\frac{(P+1)\delta}{A}} A^{1/p} \frac{M}{(P+1)!} (k+1)^{\tau}\xi_u^{\frac{1+2\tau -2\gamma}{4\gamma}-(P+1)} \Big( \frac{(P+1)\delta}{A}-u\Big)^{P+1}\,du\\
	&\geq \frac{M}{(P+1)!} A^{1/p} (k+1)^{\tau} \bigg( 2^{-(P+2)} |C_{P+1}| \frac{m}{M} \Big(  \frac{(P+1)\delta}{A}\Big)^{\frac{1+2\tau -2\gamma}{4\gamma}-(P+1)}\\
	&\qquad\times \int_{\frac{(P+1)\delta}{A}}^{A^{-1}}  \Big( u-\frac{(P+1)\delta}{A}\Big)^{P+1}\,du - \int_0^{\frac{(P+1)\delta}{A}} u^{\frac{1+2\tau -2\gamma}{4\gamma}-(P+1)} \Big( \frac{(P+1)\delta}{A}-u\Big)^{P+1}\, du\bigg)\\
	&= \frac{M}{(P+2)!} ((P+1)\delta)^{\frac{1+2\tau -2\gamma}{4\gamma}} (k+1)^{\tau} A^{\frac{1}{p} -\frac{1+2\tau -2\gamma}{4\gamma}}\\
	&\qquad\times \Big( 2^{-(P+2)} |C_{P+1}| \frac{m}{M} \big(1-(P+1)\delta \big)^{P+2}-\big( (P+1)\delta \big)^{P+2}   \Big)\\
	&\gtrsim  (k+1)^{\tau} A^{\frac{1}{p} -\frac{1+2\tau -2\gamma}{4\gamma}},
	\end{align*}
	for $\delta$ sufficiently small, since we have \eqref{eq:35}.
	
	Hence, we obtained
	\begin{equation*}
	|\langle a_K,\funk\rangle| \gtrsim (k+1)^{\tau} K^{1/p-\tau-\gamma-1/2},
	\end{equation*}
	uniformly in $k\in\NN_+$ and $k\leq K$. Thus, for any $\ve>0$ 
	\begin{align*}
	\sum_{k=0}^\infty \frac{|\langle a_K,\funk \rangle|^s}{(k+1)^{E-\ve}}\gtrsim \sum_{k=0}^K \frac{(k+1)^{s\tau} K^{s/p-s\tau-s\gamma-s/2}}{(k+1)^{E-\ve}}\simeq K^\ve,\qquad K\in\NN_+,
	\end{align*}
	since $\tau$ is large enough. This finishes the proof of the lemma. 
\end{proof}

\begin{rem}\label{rem:sharpness_multidim}
	In the multi-dimensional case the construction also works if the functions in the considered orthonormal basis are products of one-dimensional functions for which properties \eqref{eq:17} or \eqref{eq:24} holds. Indeed, denoting $A_K(x)=\prod_{i=1}^d a_K(x_i)$ we have by \eqref{eq:16} the following lower bound
	\begin{align*}
	\sum_{n\in\NN} \frac{|\langle A_K,\fun\rangle|^s}{(\ven+1)^E}\gtrsim  K^{sd(\frac{2\gamma}{p} -\frac{1}{2}-\tau-\gamma)} \sum_{K/2\leq n_i\leq K}\frac{(n_i+1)^{s\tau}}{(\ven+1)^E}\gtrsim K^{sd(\frac{2\gamma}{p} -\frac{1}{2}-\tau-\gamma)-E+sd\tau +d}=K^\ve,
	\end{align*}
	uniformly in large $K$. 
\end{rem}

\begin{rem}
	Notice that \eqref{eq:15}, generalized to the multi-dimensional situation, and the uniform boundedness principle (in a stronger version than usual, see for instance \cite[Theorem~2.5]{Rudin_1991_FunctionalAnalysis}) imply that there exists $f\in H^p(X)$ such that
	\begin{equation*}
	\sum\limits_{n\in\NN^d}\frac{|\langle f,\fun\rangle|^s}{(\ven+1)^E}=\infty.
	\end{equation*}
	This is consistent with what was proved in author's articles concerning Hardy's inequality on $H^1$, see \cite{Plewa_2018_sharpLaguerre_arxiv,Plewa_2019_JAT,Plewa_2020_TJM}.
\end{rem}

\section{Laguerre standard functions}\label{S:Laguerre_standard}

The {\it standard Laguerre functions} $\{\Lfunk\}_{k\in\mathbb{N}}$ of order $\al>-1$ are defined on $\RR_+$ by 
\begin{equation}\label{eq:25}
\Lfunk(u)= \Big(\frac{\Gamma(k+1)}{\Gamma(k+\al+1)} \Big)^{1/2} L_k^\al (u) e^{-u/2} u^{\al/2},\qquad u>0,
\end{equation}
where $L_k^\al(u)$ are the Laguerre polynomials (see \cite{Szego1959}).
Moreover, in the multi-dimensional case $\Lfun(x)$ are defined as the tensor product of the one-dimensional functions, namely
\begin{equation*}
\Lfun(x)=\prod_{i=1}^d \Lfuni(x_i),\qquad x=(x_1,\ldots, x_d)\in\RR_+^d;
\end{equation*}
here $\al=(\al_1,\ldots,\al_d)\in(-1,\infty)^d$ and $n=(n_1,\ldots, n_d)\in\NN^d$. The system $\{\Lfun\}_{n\in\NN^d}$ forms an orthonormal basis in $L^2(\RR_+^d,\,dx)$.
The following estimates are known for the one-dimensional standard Laguerre functions (see \cite[p.~435]{Muckenhoupt_1970} and \cite[p.~699]{Askey_Wainger_1965_AJM})
\begin{equation}\label{eq:38}
\begin{split}
\vert \Lfunk(u)\vert\lesssim\left\{ \begin{array}{ll}
(uk')^{\al/2},  & 0<u\leq 1/k',\\
(uk')^{-1/4}, & 1/k'<u\leq k'/2,\\
(k'(k'^{1/3}+\vert u- k'\vert))^{-1/4}, & k'/2<u\leq (3k')/2,\\
\exp(-\gamma u), & 3k'/2<u<\infty,
\end{array}\right.
\end{split}
\end{equation}
where $k'=\max(4k+2\al+2,2)$ and $\gamma>0$ depends only on $\alpha$. 

These estimates imply for all $\al\geq 0$ the bound (cf. \cite[p.~94]{Stempak_1994_Tohoku}),
\begin{equation*}
\Vert \Lfunk\Vert_{L^\infty(\RR_+)} \lesssim 1,\qquad k\in\NN.
\end{equation*}
Moreover, using the formula (see \cite[p.~95]{Stempak_1994_Tohoku})
\begin{equation}\label{eq:19}
	(\Lfunk)'(u)=-k^{1/2}u^{-1/2}\mathcal{L}_{k-1}^{\al+1}(u)+\frac{1}{2}\Big(\frac{\alpha}{u}-1\Big)\Lfunk(u), 
\end{equation}
where $\mathcal{L}_{-1}^{\al+1}\equiv 0$, for $\al\in \{0\}\cup[2,\infty)$ we obtain
\begin{equation*}
\Vert (\Lfunk)'\Vert_{L^\infty(\RR_+)} \lesssim k+1, \quad k\in\NN.
\end{equation*}
More generally, for $j\in\NN$ and $\al\in\{0,2,\ldots,2j\}\cup (2j,\infty)$ there holds (see \cite[Lemma~1]{Satake_2000_JMSJ})
\begin{equation}\label{eq:1}
\Vert (\Lfunk)^{(j)}\Vert_{L^{\infty}(\RR_+)}\lesssim (k+1)^{j}, \qquad k\in\NN.
\end{equation}

Now we will justify that $\Lfun$ belong to the spaces $\Lambda_\nu(\RR^d_+)$. For that purpose we will indicate an extension $\tilde{\mathcal{L}}_n^\al\in\Lambda_{\nu}(\RR^d)$ of $\Lfun$ to $\RR^d$. Following the idea used in \cite[p.~94]{Shi&Li_2016_JMSJ} in the case $d=1$ we define
\begin{equation*}
\tilde{\mathcal{L}}_n^\al(x)=\prod_{i=1}^d \tilde{\mathcal{L}}_{n_i}^{\al_i}(x_i),
\end{equation*}
where, for $\al_i$ which is not an even integer,
\begin{equation*}
\tilde{\mathcal{L}}_{n_i}^{\al_i}(x_i)=\left\{\begin{array}{ll}
\Lfuni(x_i),& x_i>0\\
0, & x_i\leq 0,
\end{array}\right.
\end{equation*}
and, for $\al_i$ which is an even integer,
\begin{equation*}
\tilde{\mathcal{L}}_{n_i}^{\al_i}(x_i)=\psi(n_ix_i) \Lfuni(x_i),\qquad x_i\in\RR.
\end{equation*}
In the latter case the definition of $\Lfuni$ is naturally extended by the initial formula \eqref{eq:25} to the whole real line, and $\psi$ is a smooth function supported in $[-1,\infty)$ such that $\psi\equiv 0$ on $\RR_+$ and $\Vert \psi^{(j)}\Vert_{L^\infty(\RR)}\lesssim 1$, $j\in\NN$. For an example of such function see \cite{Shi&Li_2016_JMSJ}. 


In view of \cite[Corollary~2.4]{Shi&Li_2016_JMSJ} we see that given $\nu>0$ we have $\tilde{\mathcal{L}}_{n_i}^{\al_i}\in \Lambda_{\nu}(\RR)$ for $\al_i\in[2\nu,\infty)$. Secondly, if $\al_i$ is an even integer, then $\tilde{\mathcal{L}}_{n_i}^{\al_i}\in \Lambda_{\nu}(\RR)$ for all $\nu> 0$. And lastly, for $\al\in[0,\infty)$ the functions $\Lfuni$ are bounded and hence are in $BMO(\RR)$. Thus, by Lemma \ref{lm:tensor_in_Lambda_nu} if $p\in(0,1]$ and $\al\in\big(\{0,2,\ldots,2P\}\cup [2d(p^{-1}-1),\infty)\big)^d$, where $P=\lfloor d(\frac{1}{p}-1)\rfloor$, then $\tilde{\mathcal{L}}_{n}^{\al}\in\Lambda_{d(\frac{1}{p}-1)}(\RR^d)$, and therefore $\Lfun\in\Lambda_{d(\frac{1}{p}-1)}(\RR^d_+)$. In order to satisfy the additional assumption in Lemma \ref{lm:tensor_in_Lambda_nu}, we have used the fact that for $\al\in\{0\}\cup[2,\infty)$ the functions $(\Lfunk)'$ exist and are bounded. 

The family of operators $\{\Rop\}$ associated with $\{\Lfun\}_{n\in\NN^d}$ and given by
\begin{equation*}
\Rop f=\sum_{n\in\NN^d} r^{\ven} \langle f,\Lfun\rangle \Lfun, \qquad r\in(0,1),
\end{equation*}
is composed of integral operators, with the kernels of the form
\begin{equation*}
\Rop(x,y)=\sum_{n\in\NN^d} r^{\ven} \Lfun(x)\Lfun(y).
\end{equation*}
It can be explicitly written as the product of the kernels $\Ropi(x_i,y_i)$ (cf. \cite{Plewa_2018_sharpLaguerre_arxiv,Szego1959})
\begin{equation*}
\Ropi (x_i,y_i)(1-r)^{-1}r^{-\al_i/2}\exp\Big(-\frac{1}{2}\frac{1+r}{1-r}(x_i+y_i)\Big)I_{\al_i }\Big(\frac{2r^{1/2}}{1-r}\sqrt{x_i y_i}\Big),
\end{equation*}
where $I_s (u)$ denotes the Bessel function of the first kind and order $s$. It is a real, positive, and smooth function for $s>-1$. 

In fact, we do not need this explicit formula for $\Rop(x,y)$ to prove Hardy's inequality. However, for the completeness of the presentation we gave it above. On the other hand, its analogue for Laguerre functions of Hermite type will be of paramount importance.

Now we are ready to verify condition \eqref{cond:C} for the standard Laguerre functions.

\begin{lm}\label{lm:Laguerre_standard_R_deriv}
	For $j\in\NN$ and $\al\in\{0,2,\ldots,2j \}\cup (2j,\infty)$ there holds
\begin{equation*}
\sup_{u>0} \Big\Vert \partial^j_u \Rop(u,\cdot)\Big\Vert_{L^2(\RR_+)}\lesssim (1-r)^{-\frac{1+2j}{2}},\qquad r\in(0,1).
\end{equation*}
\end{lm}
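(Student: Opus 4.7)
The plan is to exploit that $R_r^\alpha(u,v)=\sum_{k\in\NN} r^k \Lfunk(u)\Lfunk(v)$ is by construction an expansion, for fixed $u$, in the orthonormal basis $\{\Lfunk\}_{k\in\NN}$ of $L^2(\RR_+)$, and then differentiate termwise and apply Parseval. More precisely, I would first justify exchanging $\partial_u^j$ with the sum: the derivative bound \eqref{eq:1} and the global bound $\|\Lfunk\|_{L^\infty}\lesssim 1$ together with the geometric factor $r^k$ guarantee absolute and uniform convergence of the series $\sum_k r^k (\Lfunk)^{(j)}(u)\Lfunk(v)$ on compact subsets of $\RR_+^2$, so termwise differentiation in $u$ is legitimate and yields
\begin{equation*}
\partial_u^j \Rop(u,v)=\sum_{k\in\NN} r^k (\Lfunk)^{(j)}(u)\, \Lfunk(v), \qquad u,v>0.
\end{equation*}

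Next, for each fixed $u$, this is the expansion of the function $v\mapsto \partial_u^j \Rop(u,v)$ with respect to the orthonormal basis $\{\Lfunk\}_{k\in\NN}$ of $L^2(\RR_+)$, with coefficients $r^k (\Lfunk)^{(j)}(u)$. Hence by Parseval's identity,
\begin{equation*}
\bigl\Vert \partial_u^j \Rop(u,\cdot)\bigr\Vert_{L^2(\RR_+)}^2 = \sum_{k\in\NN} r^{2k}\bigl|(\Lfunk)^{(j)}(u)\bigr|^2.
\end{equation*}

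Now the assumption $\al\in\{0,2,\ldots,2j\}\cup(2j,\infty)$ is precisely the range in which the uniform bound \eqref{eq:1} applies, giving $|(\Lfunk)^{(j)}(u)|\lesssim (k+1)^j$ independently of $u>0$. Substituting and using the elementary asymptotic $\sum_{k\geq 0} r^{2k}(k+1)^{2j}\simeq (1-r)^{-(2j+1)}$ for $r\in(0,1)$, we obtain
\begin{equation*}
\sup_{u>0}\bigl\Vert \partial_u^j \Rop(u,\cdot)\bigr\Vert_{L^2(\RR_+)}^2 \lesssim \sum_{k\in\NN} r^{2k}(k+1)^{2j}\simeq (1-r)^{-(1+2j)},
\end{equation*}
and taking square roots yields the claim.

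There is no real obstacle here because the heavy lifting is done by \eqref{eq:1} (the Satake derivative estimate), whose role is to absorb the differentiation into an admissible polynomial-in-$k$ growth. The only point that requires a little care is the order of operations: taking the $L^2$ norm in $v$ first via Parseval decouples $u$ and $v$, so we only need a pointwise bound on $(\Lfunk)^{(j)}(u)$ rather than a bound on the whole kernel, which is what makes both the range of $\al$ and the exponent $(1+2j)/2$ sharp in this argument.
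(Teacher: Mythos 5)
Your proposal is correct and follows essentially the same route as the paper: termwise differentiation justified by the polynomial growth of $\Vert(\Lfunk)^{(i)}\Vert_{L^\infty(\RR_+)}$ from \eqref{eq:1}, then Parseval's identity in the $v$-variable, the uniform bound $|(\Lfunk)^{(j)}(u)|\lesssim (k+1)^j$, and the elementary estimate $\sum_k r^{2k}(k+1)^{2j}\simeq(1-r)^{-(2j+1)}$. No gaps.
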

\begin{proof}
We simply apply Parseval's identity and \eqref{eq:1} obtaining
\begin{align*}
\sup_{u>0} \Big\Vert \partial^j_u \Rop(u,\cdot)\Big\Vert_{L^2(\RR_+)}\leq \Big( \sum_{k\in \NN} r^{2k} \big\Vert(\Lfunk)^{(j)}\big\Vert_{L^2(\RR_+)}^2 \Big)^{1/2}\lesssim (1-r)^{-\frac{1+2j}{2}},
\end{align*}
uniformly in $r\in(0,1)$. Notice that interchanging differentiation with summation is possible due to polynomial growth on $\Vert (\Lfunk)^{(i)}\Vert_{L^\infty(\RR_+)}$, $0\leq i\leq j$ (see \eqref{eq:1}), and the Lebesgue dominated convergence theorem. Analogous remarks apply to similar operations in this and the next sections.
\end{proof}

\begin{lm}\label{lm:Laguerre_standard_R_diff}
	Let $j\in\NN$ and $\al\in(2j,2j+2)$. Then the estimate
	\begin{equation*}
	\Big\Vert \partial^{j}_u \Rop(u,\cdot)-\partial^{j}_u \Rop(u',\cdot)\Big\Vert_{L^2(\RR_+)}\lesssim (1-r)^{-(1+\al)/2}|u-u'|^{\al/2-j},
	\end{equation*}
	holds uniformly in $r\in(0,1)$ and $u,u'>0$. 
\end{lm}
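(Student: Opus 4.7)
The plan is to apply Parseval's identity to convert the $L^2$-norm into a weighted $\ell^2$-sum over the basis index, reduce to a pointwise Hölder-type estimate on $(\Lfunk)^{(j)}$ with the appropriate $k$-weight, and finally sum a convergent geometric series. By orthonormality of $\{\Lfunk\}_{k\in\NN}$ together with the interchange of differentiation and summation justified as in the proof of Lemma~\ref{lm:Laguerre_standard_R_deriv},
\[
\Big\Vert \partial^j_u \Rop(u,\cdot)-\partial^j_u \Rop(u',\cdot)\Big\Vert_{L^2(\RR_+)}^2 = \sum_{k\in\NN} r^{2k}\,\big|(\Lfunk)^{(j)}(u)-(\Lfunk)^{(j)}(u')\big|^2.
\]
Since $\sum_k r^{2k}(k+1)^\al\lesssim (1-r)^{-(1+\al)}$, the task reduces to establishing the uniform pointwise bound
\begin{equation*}
\big|(\Lfunk)^{(j)}(u)-(\Lfunk)^{(j)}(u')\big| \lesssim (k+1)^{\al/2}\,|u-u'|^{\al/2-j}, \qquad u,u'>0,\ k\in\NN,
\end{equation*}
whose exponents match the target $(1-r)^{-(1+\al)/2}|u-u'|^{\al/2-j}$ after summation.

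To prove this pointwise inequality I would set $\theta := \al/2-j\in(0,1)$ and split on whether $(k+1)|u-u'|\gtrless 1$. In the first regime, the $L^\infty$-bound~\eqref{eq:1}, which applies because $\al>2j$, gives $|(\Lfunk)^{(j)}(u)-(\Lfunk)^{(j)}(u')|\lesssim (k+1)^j = (k+1)^{\al/2}(k+1)^{-\theta} \leq (k+1)^{\al/2}|u-u'|^\theta$. In the second regime I would use the fundamental theorem of calculus together with a pointwise estimate
\[
|(\Lfunk)^{(j+1)}(t)| \lesssim (k+1)^{\al/2}\, t^{\al/2-j-1} + (k+1)^{j+1},\qquad t>0,
\]
derived from the factorization $\Lfunk(u) = u^{\al/2} q_k(u)$ with $q_k\in C^\infty(\RR_+)$ (namely $q_k=(\Gamma(k+1)/\Gamma(k+\al+1))^{1/2}L_k^\al(u)e^{-u/2}$), Leibniz's rule, the derivative recurrence~\eqref{eq:19}, and the asymptotics~\eqref{eq:38}. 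Integrating between $u$ and $u'$ and invoking the elementary Hölder-type inequality $|u^\theta-u'^\theta|\leq |u-u'|^\theta$, valid precisely because $\theta\in(0,1)$, produces a bound of the form $(k+1)^{\al/2}|u-u'|^\theta + (k+1)^{j+1}|u-u'|$. In the present subregion $(k+1)|u-u'|<1$ the second summand is dominated by the first, since this domination is equivalent to $((k+1)|u-u'|)^{1-\theta}\leq 1$.

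The main obstacle is the pointwise estimate on $(\Lfunk)^{(j+1)}$. Because $\al < 2(j+1)$, formula~\eqref{eq:1} no longer supplies an $L^\infty$-bound at this order of differentiation, and the near-zero contribution $t^{\al/2-j-1}$ coming from differentiating the factor $u^{\al/2}$ is genuinely singular. Converting this singular integrand into the target Hölder factor $|u-u'|^{\al/2-j}$ hinges on the elementary inequality on Hölder powers, which in turn works precisely because the range $\al\in(2j,2j+2)$ ensures $\theta\in(0,1)$.
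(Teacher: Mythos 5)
Your proof is correct, and its outer skeleton --- Parseval's identity in the second variable, the reduction to the pointwise bound $|(\Lfunk)^{(j)}(u)-(\Lfunk)^{(j)}(u')|\lesssim (k+1)^{\al/2}|u-u'|^{\al/2-j}$, and the series estimate $\sum_k r^{2k}(k+1)^{\al}\lesssim (1-r)^{-(1+\al)}$ --- is exactly the paper's. The difference is in how that pointwise H\"older estimate is obtained: the paper simply cites it (it is Lemma~2.2 of Shi and Li, the reference invoked in the proof of Lemma \ref{lm:Laguerre_standard_R_diff}), whereas you re-prove it via the dichotomy $(k+1)|u-u'|\gtrless 1$, using the $L^\infty$ bound \eqref{eq:1} in the far regime (legitimate here since $\al>2j$) and, in the near regime, the fundamental theorem of calculus with the derivative bound $|(\Lfunk)^{(j+1)}(t)|\lesssim (k+1)^{\al/2}t^{\al/2-j-1}+(k+1)^{j+1}$, the subadditivity $|u^{\theta}-u'^{\theta}|\le |u-u'|^{\theta}$ for $\theta=\al/2-j\in(0,1)$, and the absorption $((k+1)|u-u'|)^{1-\theta}\le 1$. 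All of these steps check out, including the integrability of the singularity $t^{\al/2-j-1}$, which requires precisely $\al>2j$. The one ingredient you only sketch is the derivative bound on $(\Lfunk)^{(j+1)}$; it is true, and your route (write $\Lfunk(u)=u^{\al/2}q_k(u)$, apply Leibniz, control $q_k^{(m)}$ by iterating \eqref{eq:19} and invoking \eqref{eq:38}) does work --- it is, in substance, the standard-Laguerre analogue of what the paper proves for the Hermite-type functions in Lemma \ref{lm:Laguerre_Hermite_fun_estim}, and your two-regime FTC argument mirrors the paper's own Lemma \ref{lm:Laguerre_Hermite_fun_diff}. What each approach buys: the citation keeps the paper's proof to three lines; yours makes the standard-Laguerre section self-contained, relying only on facts already stated in the paper, at the cost of carrying out the routine but not entirely trivial induction behind the bound on $(\Lfunk)^{(j+1)}$.
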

\begin{proof}
	Fix $j\in\NN$. By \cite[Lemma~2.2]{Shi&Li_2016_JMSJ} we have for $\al\in(2j,2j+2)$ the estimate
	\begin{equation*}
	\big|(\Lfunk)^{(j)}(u)-(\Lfunk)^{(j)}(u')\big|\lesssim (k+1)^{\al/2} |u-u'|^{\al/2-j}, \qquad u,u'>0,\ k\in\NN. 
	\end{equation*}
	Hence,  Parseval's identity implies
	\begin{align*}
	\Big\Vert \partial^{j}_u \Rop(u,\cdot)-\partial^{j}_u \Rop(u',\cdot)\Big\Vert_{L^2(\RR_+)}&\leq \Big(\sum_{k\in\NN} r^{2k} (k+1)^{\al}  \Big)^{1/2}|u-u'|^{\al/2-j},
	\end{align*}
	uniformly in $u,u'\in\RR_+$, and the claim follows by simple estimate of the latter series (cf. \cite[(3.3)]{Plewa_2019_JAT}).
\end{proof}

Now we easily obtain the following proposition.

\begin{prop}\label{prop:standard_Laguerre_cond_C}
	If $k\in\NN$ and $\al\in\big(\{0,2,\ldots,2k \}\cup (2k,\infty)\big)^d$, then
	\begin{align*}
	\begin{split}
	\Big\Vert &\Rop(x,\cdot)-\sum_{\ven\leq k} \frac{ \partial^{n}_{x} \Rop(x',\cdot)}{n_1! \cdot\ldots\cdot n_d!}\prod_{i=1}^d (x_i-x_i')^{n_i}\Big\Vert_{L^2(\RR^d_+)}\lesssim \sum_{\delta\in\Delta^\al_k}(1-r)^{-\frac{d+2k+2\delta}{2}}|x-x'|^{k+\delta},
	\end{split}
	\end{align*}		
	uniformly in $r\in(0,1)$ and $x,x'\in\RR^d_+$, where
	\begin{equation*}
	\Delta^\al_k=\{1\}\cup\{\al_i/2-k:\ \al_i\in(2k,2k+2) \}.
	\end{equation*}
\end{prop}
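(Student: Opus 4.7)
The kernel has the tensor product structure $\Rop(x,y)=\prod_{i=1}^d R_r^{\al_i}(x_i,y_i)$, so its mixed partial derivatives factor as products of one-dimensional derivatives, and any $L^2(\RR_+^d)$-norm in $y$ of such a product equals the corresponding product of one-dimensional $L^2(\RR_+)$-norms. The plan is to combine Lemmas \ref{lm:Laguerre_standard_R_deriv} and \ref{lm:Laguerre_standard_R_diff} via a telescoping identity across coordinates, and then insert the resulting H\"older bound for the $k$-th derivatives into the H\"older-remainder form of Taylor's theorem.

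The main step is to establish, for every multi-index $n$ with $|n|=k$, the bound
\begin{equation*}
\big\|\partial^n_x\Rop(x,\cdot)-\partial^n_x\Rop(x',\cdot)\big\|_{L^2(\RR_+^d)}\lesssim\sum_{\delta\in\Delta^\al_k}(1-r)^{-(d+2k+2\delta)/2}|x-x'|^\delta.
\end{equation*}
I will use the identity $\prod_i a_i-\prod_i b_i=\sum_i(\prod_{j<i}a_j)(a_i-b_i)(\prod_{j>i}b_j)$ with $a_i=\partial_u^{n_i}R_r^{\al_i}(x_i,y_i)$ and $b_i=\partial_u^{n_i}R_r^{\al_i}(x_i',y_i)$; taking $L^2$-norms in $y$ then factors each term into a product of one-dimensional norms. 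The factors without a difference are controlled by Lemma \ref{lm:Laguerre_standard_R_deriv} and contribute $\lesssim(1-r)^{-(1+2n_j)/2}$, since the inclusion $\{0,2,\dots,2k\}\cup(2k,\infty)\subset\{0,2,\dots,2n_j\}\cup(2n_j,\infty)$ holds whenever $n_j\leq k$. For the one-dimensional difference factor I consider two cases. If $\al_i\in(2n_i,2n_i+2)$---which, under the hypothesis on $\al$, forces $n_i=k$ and $\al_i\in(2k,2k+2)$---Lemma \ref{lm:Laguerre_standard_R_diff} yields a H\"older bound with exponent $\al_i/2-k$; otherwise, Lemma \ref{lm:Laguerre_standard_R_deriv} applied to order $n_i+1$ combines with the mean value theorem to give a Lipschitz bound. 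Both exponents lie in $\Delta_k^\al$, and the constants assemble into the desired $(1-r)$-power.

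For $k\geq 1$, I then invoke the H\"older-remainder form of Taylor's theorem,
\begin{equation*}
\Rop(x,y)-\sum_{|n|\leq k}\frac{\partial^n_x\Rop(x',y)}{n!}(x-x')^n=k\sum_{|n|=k}\frac{(x-x')^n}{n!}\int_0^1(1-t)^{k-1}\big[\partial^n_x\Rop(x'+t(x-x'),y)-\partial^n_x\Rop(x',y)\big]\,dt,
\end{equation*}
take $L^2$-norms in $y$, apply Minkowski's inequality with the displayed H\"older bound pointwise in $t$, and use the finiteness of $\int_0^1(1-t)^{k-1}t^\delta\,dt$ to produce the extra factor $|x-x'|^\delta$ and thereby the exponent $k+\delta$. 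For $k=0$ the sum on the left reduces to $\Rop(x',y)$, and the H\"older bound of the second step (with $n=0$) is itself the conclusion.

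The main obstacle is the case analysis underlying the second step: verifying that the hypothesis $\al\in(\{0,2,\dots,2k\}\cup(2k,\infty))^d$ supplies exactly the smoothness needed for Lemmas \ref{lm:Laguerre_standard_R_deriv} and \ref{lm:Laguerre_standard_R_diff} to cover every $(n_i,\al_i)$-combination that arises, and that the H\"older exponent produced in each branch belongs to $\Delta_k^\al$. Beyond this bookkeeping, the tensor product structure makes everything straightforward.
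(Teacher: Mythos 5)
Your proof is correct, and its core ingredients coincide with the paper's: the tensor-product factorization of $\Rop(x,y)$ and of $L^2$-norms, Taylor's theorem, Lemma \ref{lm:Laguerre_standard_R_deriv} for the factors without a difference, and the same dichotomy for the difference factor --- Lemma \ref{lm:Laguerre_standard_R_diff} when $\al_i\in(2n_i,2n_i+2)$ (which, as you correctly note, forces $n_i=k$), and the mean value theorem plus Lemma \ref{lm:Laguerre_standard_R_deriv} at order $n_i+1$ otherwise. Where you genuinely differ is in the form of the Taylor remainder, and your route is the technically cleaner one. The paper uses the Lagrange form: when no $\al_i$ lies in $(2k,2k+2)$ it expands to order $k+1$ and invokes only Lemma \ref{lm:Laguerre_standard_R_deriv}; when some $\al_i\in(2k,2k+2)$ it expands to order $k$ and bounds differences of $k$-th derivatives at an intermediate point $\xi$. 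Taken literally this has a wrinkle: the Lagrange point $\xi$ for $x\mapsto\Rop(x,y)$ depends on $y$, whereas both lemmas bound $L^2_y$-norms at \emph{fixed} points (and the paper's displayed remainder is moreover written as a product of one-dimensional differences rather than a difference of products, which must be telescoped as you do explicitly). Your choice of the integral remainder, followed by Minkowski's inequality in $t$, sidesteps this: the points $x'+t(x-x')$ are deterministic, the $L^2$-norm passes inside the $t$-integral, and $\int_0^1(1-t)^{k-1}t^{\delta}\,dt<\infty$ produces the exponent $k+\delta$. You also treat both parameter regimes uniformly through the single difference estimate for $|n|=k$ (your Lipschitz branch at order $n_i+1$ reproduces what the paper's order-$(k+1)$ expansion achieves), at no cost: the exponent bookkeeping --- each $\delta$ landing in $\Delta^\al_k$ and the $(1-r)$-powers summing to $-(d+2k+2\delta)/2$ --- checks out in every case.
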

\begin{proof}
Fix $\al\in\big(\{0,2,\ldots,2k-2\}\cup [2k,\infty)\big)^d$. If for all $i=1,\ldots,d$ there is $\al_i\notin (2k,2k+2)$, then apply Taylor's theorem with the reminder of $(k+1)$-th order, and Lemma \ref{lm:Laguerre_standard_R_deriv} with $j\leq k+1$. On the other hand, if some $\al_i\in(2k,2k+2)$, then proceed as before but with $k$-th order reminder, obtaining
\begin{align*}
\sum_{\ven=k} \frac{k!}{n_1! \cdot\ldots\cdot n_d!} \prod_{i=1}^d  \Big( \big(\partial_{x_i}^{n_i}\Ropi(\xi_i,y_i)-\partial_{x_i}^{n_i}\Ropi(x'_i,y_i)\big)(x-x_i')^{n_i}\Big),
\end{align*}
where for every $i\in\{1,\ldots,d \}$ the number $\xi_i$ lies between $x_i$ and $x'_i$. Now for each difference above we apply Lemma \ref{lm:Laguerre_standard_R_diff} if $\al_i\in(2n_i,2n_i+2)$,  or the mean value theorem and Lemma \ref{lm:Laguerre_standard_R_deriv} in the opposite situation.
\end{proof}

Although the following lemma will be applied strictly to prove sharpness of Hardy's inequality associated with the standard Laguerre expansions, we stress that this is an interesting result and possibly it could be widely used in other problems concerning the functions $\Lfunk$.

Here and later on we use the convention that $A\simeq -B$ for positive $B$ means that $A$ is negative and $(-A)\simeq B$.

\begin{lm}\label{lm:standard_Laguerre_sharp_estimates}
	Let $\al\geq 0$ and $j,\ell\in\NN$ be given. There exists a constant $c>0$ such that if $\ell\geq j$, then there holds
	\begin{equation*}
	\frac{d^j}{d u^j}\frac{\Lfunk(u)}{u^{\al/2-\ell}}\simeq
	(k+1)^{\al/2} u^{\ell-j},
	\end{equation*}
	whereas if $\ell\leq j$, then
	\begin{equation*}
	\frac{d^j}{d u^j}\frac{\Lfunk(u)}{u^{\al/2-\ell}}\simeq 
	(-1)^{j-\ell} (k+1)^{\al/2+j-\ell},
	\end{equation*}
	uniformly in $k\in\NN$ and $u\in(0,c(k+1)^{-1})$. 
\end{lm}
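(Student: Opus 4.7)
The plan is to write out $\Lfunk(u)/u^{\al/2-\ell}$ explicitly using \eqref{eq:25}, differentiate via Leibniz, and then show that on the interval $(0,c(k+1)^{-1})$ a single term dominates the sum. Concretely, if $c_k=(\Gamma(k+1)/\Gamma(k+\al+1))^{1/2}\simeq (k+1)^{-\al/2}$, then
\begin{equation*}
\frac{\Lfunk(u)}{u^{\al/2-\ell}}=c_k\,L_k^\al(u)\,e^{-u/2}\,u^\ell,
\end{equation*}
so Leibniz together with the classical identity $(L_k^\al)^{(a)}(u)=(-1)^a L_{k-a}^{\al+a}(u)$ gives
\begin{equation*}
\frac{d^j}{du^j}\frac{\Lfunk(u)}{u^{\al/2-\ell}}
=c_k\,e^{-u/2}\!\!\!\sum_{\substack{a+b+d=j\\ d\le\ell}}\!\!\frac{j!}{a!\,b!\,d!}\,(-1)^a\bigl(-\tfrac12\bigr)^b\frac{\ell!}{(\ell-d)!}L_{k-a}^{\al+a}(u)\,u^{\ell-d}.
\end{equation*}

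Next I would use that on the interval $u\in(0,c(k+1)^{-1})$ with $c>0$ small, each factor $L_{k-a}^{\al+a}(u)$ is well approximated by $L_{k-a}^{\al+a}(0)=\binom{k+\al}{k-a}$, and $e^{-u/2}$ by $1$. This follows from the explicit expansion $L_k^\al(u)=\sum_i\binom{k+\al}{k-i}(-u)^i/i!$: the ratio of consecutive terms is $\simeq u(k+1)/(i(\al+i+1))$, so for $u\le c/(k+1)$ the series is dominated by its value at $0$ up to a multiplicative factor $1+O(c)$. Using Stirling, $c_k\binom{k+\al}{k-a}\simeq (k+1)^{\al/2+a}$, so the $(a,b,d)$-term contributes on the order of $(k+1)^{\al/2+a}u^{\ell-d}$ up to a sign $(-1)^a$.

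The leading term is then identified by minimising the power of $u$, i.e.\ maximising $d$. When $\ell\ge j$ the maximum is $d=j$ (forcing $a=b=0$), producing $(k+1)^{\al/2}u^{\ell-j}$; when $\ell\le j$ the maximum is $d=\ell$, and among those with $d=\ell$ the largest coefficient power requires $a=j-\ell$, $b=0$, producing $(-1)^{j-\ell}(k+1)^{\al/2+j-\ell}$. In both cases this matches the stated asymptotics.

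The main obstacle, and the only nontrivial step, is to ensure this leading term truly dominates the others uniformly in $k$, so that upper and lower bounds of the same order follow. Comparing a generic term to the leading one yields a ratio of the form $(k+1)^{a+d-j}u^{\text{something non-negative}}$ where $a+d<j$ strictly whenever $b\ge1$ (or one has $b=0$ with a strictly positive power of $u$ left over), so using $u\le c/(k+1)$ the whole ratio is bounded by $c^{\mathrm{const}>0}$. Choosing $c$ small enough makes the non-leading contribution arbitrarily small compared to the leading one, which yields the two-sided estimate $\simeq$ in the claim; the error from replacing $L_{k-a}^{\al+a}(u)$ by $L_{k-a}^{\al+a}(0)$ and $e^{-u/2}$ by $1$ is absorbed by the same smallness argument.
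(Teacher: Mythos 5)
Your route is genuinely different from the paper's: the paper proves this lemma by induction on $j$, using the recurrence \eqref{eq:19} to write $\frac{d^{j+1}}{du^{j+1}}\big(\Lfunk(u)u^{\ell-\al/2}\big)$ as the $j$-th derivative of three terms of known size and sign, whereas you expand everything by Leibniz and freeze the Laguerre polynomials at $u=0$. The skeleton of your argument is sound (the formula, the identity $(L_k^\al)^{(a)}=(-1)^aL_{k-a}^{\al+a}$, the $1+O(c)$ approximation of $L_{k-a}^{\al+a}(u)$ by $\binom{k+\al}{k-a}$, and the Stirling estimate are all correct), but the final domination step has a genuine gap in the case $\ell<j$. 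For a term with $d=\ell$ and $b\geq 1$ the ratio to your designated leading term $(a,b,d)=(j-\ell,0,\ell)$ is $\simeq\binom{j-\ell}{b}2^{-b}(k+1)^{-b}$ with \emph{no} power of $u$ left over, so it is not bounded by $c^{\mathrm{const}>0}$ and cannot be made small by shrinking $c$; it is of order $1$ for bounded $k$. Worse, for $k<j-\ell$ your leading term vanishes identically, since $L_{k-a}^{\al+a}\equiv 0$ when $a>k$, so "everything else is small compared to the leading term" is literally false there, and the claimed uniformity in $k\in\NN$ breaks.

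The fix is short but it is a missing idea rather than a routine detail: observe that \emph{every} term with $d=\ell$ carries the sign $(-1)^a(-\tfrac12)^b=(-1)^{j-\ell}2^{-b}$ times a positive quantity, so these terms add constructively; their total is therefore comparable to the largest nonvanishing one, which is $\simeq(k+1)^{\al/2+j-\ell}$ both for $k\geq j-\ell$ (the $a=j-\ell$ term) and for $k<j-\ell$ (where $k$ is bounded and the $a=0$, $b=j-\ell$ term survives). Only the terms with $d<\ell$ carry a strictly positive power of $u$ and are genuinely $O(c)$ relative to this block; your smallness argument applies to them, and to all non-leading terms in the case $\ell\geq j$, without change. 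With that sign observation inserted, your proof closes and is arguably more explicit than the paper's inductive one, at the price of having to track the combinatorics of the full Leibniz sum rather than three terms per inductive step.
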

\begin{proof}
	We will apply the induction over $j$ separately in both cases. Note that the claim holds for $j=0$ and any 
	$\ell\in\NN$ (this is a known result, see \cite[pp.~435,~453]{Muckenhoupt_1970}). We assume that it is valid for some $j$ and we will justify it for $j+1$. 
	Observe that by \eqref{eq:19} we have
	\begin{equation}\label{eq:20}
		\frac{d^{j+1}}{d u^{j+1}}\frac{\Lfunk(u)}{u^{\al/2-\ell}}=\frac{d^j}{d u^j} \Big(\ell  \frac{\Lfunk(u)}{u^{\al/2-\ell+1}} -\frac{1}{2}  \frac{\Lfunk(u)}{u^{\al/2-\ell}} -\sqrt{k} \frac{\mathcal{L}_{k-1}^{\al+1}(u)}{u^{(\al+1)/2-\ell}}\Big).
	\end{equation} 
	Notice that if $\ell\geq j+1$, then the components on the right hand side of \eqref{eq:20} are of the sizes: $(k+1)^{\al/2} u^{\ell-j-1}$, $(k+1)^{\al/2} u^{\ell-j}$, and $(k+1)^{\al/2 +1} u^{\ell-j}$, respectively, and the first one is the dominating.
	
	It remains to justify the case $ j\geq \ell$. Let us assume that for some such $j$ the estimate holds. Then we have similarly as above. The second and the third summand on the right hand side of \eqref{eq:20} are of the sizes (and signs): $(-1)^{j-\ell+1} (k+1)^{\al/2+j-\ell}$ and $(-1)^{j-\ell+1} (k+1)^{\al/2+1+j-\ell}$, respectively. On the other hand, the first component we decompose and get
	\begin{equation*}
	\frac{d^j}{d u^j}\ell  \frac{\Lfunk(u)}{u^{\al/2-\ell+1}}= \ell\frac{d^{j-1}}{d u^{j-1}} \Big((\ell-1)  \frac{\Lfunk(u)}{u^{\al/2-\ell+2}} -\frac{1}{2}  \frac{\Lfunk(u)}{u^{\al/2-\ell}} -\sqrt{k} \frac{\mathcal{L}_{k-1}^{\al+1}(u)}{u^{(\al+1)/2-\ell}}\Big).
	\end{equation*}
	Again, the first summand can be decomposed, and the two remaining are of the same size (and sign) as before. Moreover, note that the $i$-th decomposition of the first resulting component brings the multiplicative constant $\ell-i+1$. But this proves that the component vanishes, since $j\geq \ell$. Hence, in this case \eqref{eq:20} is of the size and sign $(-1)^{j-\ell+1} (k+1)^{\al/2+1+j-\ell}$. This finishes the proof of the lemma.
\end{proof}

We now are ready to prove Hardy's inequality associated with the standard Laguerre functions.

\begin{thm}\label{thm:main_Laguerre_standard}
	Let $p\in(0,1)$, $s\in[p,2]$, and denote $P:=\lfloor d(p^{-1}-1)\rfloor$. For 
	\begin{equation*}
	\al\in\big(\{0,2,\ldots,2P \}\cup (2d(p^{-1}-1),\infty)\big)^d
	\end{equation*}
	there holds
	\begin{equation*}
	\sum_{n\in\NN^d}\frac{|\langle f,\Lfun\rangle|^s}{(\ven+1)^E}\lesssim \Vert f\Vert^s_{H^p(\RR^d_+)}, \qquad f\in H^p(\RR^d_+),
	\end{equation*}
	where $E= d+sd\big(p^{-1}-1\big)$, and the exponent is sharp.
\end{thm}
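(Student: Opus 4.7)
The plan is to apply Theorem \ref{thm:Hp_general} with $\gamma = 1/2$, and then to derive sharpness from Remark \ref{rem:sharpness_multidim} combined with Propositions \ref{prop:general_sharpness} and \ref{prop:general_sharpness_2}. A direct substitution in \eqref{Exponent_formula} gives
\begin{equation*}
E = \frac{(2-p)sd}{2p} + \frac{(2-s)d}{2} = d + sd(p^{-1}-1),
\end{equation*}
which is exactly the exponent claimed in the theorem.

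For the positive direction I verify the three hypotheses of Theorem \ref{thm:Hp_general}. Orthonormality of $\{\Lfun\}_{n\in\NN^d}$ in $L^2(\RR_+^d)$ is classical. The membership $\Lfun \in \Lambda_{d(p^{-1}-1)}(\RR_+^d)$ follows from the construction of the extensions $\tilde{\mathcal{L}}_n^\al$ via the cut-off $\psi$, the Shi--Li estimate \cite{Shi&Li_2016_JMSJ}, and Lemma \ref{lm:tensor_in_Lambda_nu}; the admissible range of $\al$ is precisely the one in the statement. Condition \eqref{cond:C} with $k=P$, $\gamma=1/2$, and $\Delta=\Delta_P^\al$ is furnished by Proposition \ref{prop:standard_Laguerre_cond_C}, provided that every $\delta \in \Delta_P^\al$ satisfies $\delta > d(p^{-1}-1)-P$: the choice $\delta=1$ works because $P+1 > d(p^{-1}-1)$ by the definition of the floor, and $\delta = \al_i/2 - P$ with $\al_i > 2d(p^{-1}-1)$ works directly. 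Theorem \ref{thm:Hp_general} then yields the Hardy inequality with the desired $E$.

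For sharpness I invoke Remark \ref{rem:sharpness_multidim}, which reduces matters to verifying, for each one-dimensional factor $\Lfunk$, either condition \eqref{eq:17} or \eqref{eq:24} with $\gamma=1/2$. In that case the threshold $\tau > (4\gamma-2p\gamma-p)/(2p)$ simplifies to $\tau > p^{-1}-1$, and the two conditions read $|\Lfunk(u)| \simeq (k+1)^\tau u^\tau$ and $|(\Lfunk)^{(P+1)}(u)| \simeq (k+1)^\tau u^{\tau-(P+1)}$ on $(0,c(k+1)^{-1})$ without sign change. If $\al_i > 2d(p^{-1}-1)$, Lemma \ref{lm:standard_Laguerre_sharp_estimates} with $j = \ell = 0$ gives $\Lfunk(u) \simeq (k+1)^{\al_i/2} u^{\al_i/2}$ near zero, confirming \eqref{eq:17} with $\tau = \al_i/2 > d(p^{-1}-1) \ge p^{-1}-1$. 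If instead $\al_i = 2\ell_0$ with $\ell_0 \in \{0,1,\ldots,P\}$, Lemma \ref{lm:standard_Laguerre_sharp_estimates} with $j = P+1$ and $\ell = \ell_0$ gives $(\Lfunk)^{(P+1)}(u) \simeq (-1)^{P+1-\ell_0}(k+1)^{P+1}$ near zero, which is exactly \eqref{eq:24} with $\tau = P+1 > p^{-1}-1$. The required atoms from Remark \ref{rem:sharpness_multidim} then witness sharpness.

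The principal obstacle lies not in this proof itself but in its prerequisites: Proposition \ref{prop:standard_Laguerre_cond_C} (whose non-integer $\al$ branch requires the Hölder-type bound in Lemma \ref{lm:Laguerre_standard_R_diff}) and the delicate size-and-sign tracking of high-order derivatives carried out in Lemma \ref{lm:standard_Laguerre_sharp_estimates}. Once those are in place, the theorem is reduced to the bookkeeping performed above.
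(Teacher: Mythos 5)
Your proposal is correct and follows essentially the same route as the paper: Theorem \ref{thm:Hp_general} with $\gamma=1/2$ via Proposition \ref{prop:standard_Laguerre_cond_C} for the inequality, and Propositions \ref{prop:general_sharpness} and \ref{prop:general_sharpness_2} together with Lemma \ref{lm:standard_Laguerre_sharp_estimates} (cases $j=\ell=0$ and $j=P+1$, $\ell=\al_i/2$) plus Remark \ref{rem:sharpness_multidim} for sharpness. The computations of $E$, of the threshold $\tau>p^{-1}-1$, and the verification that each $\delta\in\Delta^\al_P$ exceeds $d(p^{-1}-1)-P$ all match the paper's argument.
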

\begin{proof}
	Proposition \ref{prop:standard_Laguerre_cond_C} ensures that the appropriate version of \eqref{cond:C} holds for the standard Laguerre functions, and hence by Theorem \ref{thm:Hp_general} we obtain associated Hardy's inequality.
	
	Observe that if  $\al\in\big(\{0,2,\ldots,2P\}\cup (2d(p^{-1}-1),\infty)\big)^d$ and for some $i\in\{1,\ldots,d\}$ there is $\al_i/2=d(p^{-1}-1)$, then, although $\Lfun\in\Lambda_{d (\frac{1}{p}-1) }(\RR^d)$, our method does not give Hardy's inequality, unless $d(p^{-1}-1)$ is an integer. Indeed, in such case there exists $\delta= d(p^{-1}-1)-\lfloor d(p^{-1}-1)\rfloor$ in the appropriate version of \eqref{cond:C}, for which the reasoning is not valid. However, due to Remark \ref{rem:sharpness} we see that in such case Hardy's inequality does not hold with the exponent given by \eqref{Exponent_formula}. This agrees with the already known results concerning this topic, see \cite{Satake_2000_JMSJ,Shi&Li_2016_JMSJ}.
	 
	On the other hand, by Lemma \ref{lm:standard_Laguerre_sharp_estimates} (with $j=\ell=0$) we have
	\begin{equation}\label{eq:22}
	A((k+1)u)^{\al/2}\leq \Lfunk(u)\leq B((k+1)u)^{\al/2},\qquad 0<u\leq \frac{c}{k+1},
	\end{equation}
	where $c>0$, and we see that, since $\gamma=1/2$, condition \eqref{eq:17} holds for $\{\Lfunk\}_{k\in\NN}$ with $\tau=\al/2$. Hence, by Proposition \ref{prop:general_sharpness} sharpness follows for $\al> 2(p^{-1}-1)$ (if $d=1$; in general for $\al\in \big(2d(p^{-1}-1),\infty)^d$). Moreover, if $\al$ is an even integer smaller that $2(p^{-1}-1)$, then we apply Proposition \ref{prop:general_sharpness_2} and Lemma \ref{lm:standard_Laguerre_sharp_estimates} (with $\ell=\al/2$ and $j=P+1$).
	The reasoning can be transferred to the multi-dimensional situation, see Remark \ref{rem:sharpness_multidim}.
	
\end{proof}

\section{Laguerre functions of Hermite type}\label{S:Laguerre_Hermite}

The {\it Laguerre functions of Hermite type} $\hfunk$, $k\in\NN$, are defined by the following relation with the standard Laguerre functions
\begin{equation}\label{eq:33}
\hfunk(u)=\sqrt{2u}\Lfunk(u^2)=\Big(\frac{2\Gamma(k+1)}{\Gamma(k+\al+1)} \Big)^{1/2} L_k^\al (u^2) e^{-u^2/2} u^{\al+1/2},
\end{equation}
where $u>0$ and $\al>-1$. In the multi-dimensional situation $\hfun(x)$ are defined as the tensor products of $\hfuni(x_i)$. The system $\{\hfun\}_{n\in\NN^d}$ is then an orthonormal basis in $L^2(\RR^d_+)$.

The functions $\funk$ are bounded on $\RR_+$ for $\al\geq-1/2$. Moreover,
\begin{equation}\label{eq:9}
\Vert \hfunk\Vert_{L^{\infty}(\RR_+)}\lesssim (k+1)^{-1/12},\qquad k\in\NN.
\end{equation}
The following recurrent formula for the derivatives of $\hfunk$ holds (see \cite[p.~100]{Stempak_1994_Tohoku})
\begin{equation}\label{eq:6}
(\hfunk)'(u)=-2\sqrt{k}\varphi_{k-1}^{\al+1}(u)+\left(\frac{2\al+1}{2u}-u\right)\hfunk(u),
\end{equation}
where $\varphi_{-1}^{\al+1}\equiv 0 $. Hence, for $\al\in\{-1/2\}\cup[1/2,\infty)$, using \eqref{eq:33} and \eqref{eq:38} one obtains
\begin{equation*}
\left\Vert (\hfunk)'\right\Vert_{L^{\infty}(\RR_+)}\lesssim (k+1)^{5/12},\qquad k\in\NN.
\end{equation*}
For the boundedness of higher order derivatives see Lemma \ref{lm:Laguerre_Hermite_Linfty_derivatives}.

\subsection{Lipschitz and $BMO$ properties}

Obviously, $\hfun\in L^\infty(\RR^d_+)$ for $\al\in[-1/2,\infty)^d$, hence $\hfun\in BMO(\RR^d_+)$. In order to justify that $\hfun\in\Lambda_{\nu}(\RR^d_+)$ for $\nu>0$ and certain $\al$'s, we shall consider the one-dimensional situation, and then apply Lemma \ref{lm:tensor_in_Lambda_nu}. To prove that $\hfunk\in\Lambda_{\nu}(\RR_+)$ we will construct an extension $\tilde{\varphi}_k^\al$ of $\hfunk$ to $\RR$, such that $\tilde{\varphi}_k^\al\in\Lambda_{\nu}(\RR)$. 

For $\al+1/2\notin\NN$ we simply put
\begin{equation*}
\tilde{\varphi}_k^\al(u)=\left\{
\begin{array}{ll}
\hfunk(u),&u>0,\\
0,&u\leq 0.
\end{array}\right.
\end{equation*} 
Observe that $\tilde{\varphi}_k^\al\in\mathcal{C}^{\lfloor \al+1/2\rfloor} (\RR)$
On the other hand, if $\al+1/2$ is an integer, then note that we can naturally extend the definition \eqref{eq:33} of $\funk$ to the whole $\RR$, and put
\begin{equation*}
\tilde{\varphi}_k^\al(u)=\hfunk(u),\qquad u\in\RR.
\end{equation*}  
In this case $\tilde{\varphi}_k^\al\in\mathcal{C}^{\infty} (\RR)$.


\begin{lm}\label{lm:Laguerre_Hermite_Lipschitz}
	Let $\al\geq -1/2$. If $\al+1/2\notin\NN$, then $\tilde{\varphi}_k^\al\in\Lambda_{\nu}(\RR)$ for $\nu \leq \al+1/2$, whereas if $\al+1/2\in\NN$, then $\tilde{\varphi}_k^\al\in\Lambda_{\nu}(\RR)$ for all $\nu\geq 0$.
\end{lm}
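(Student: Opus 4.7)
The plan is to handle the two cases of the lemma separately, starting with the simpler one. When $\al+1/2\in\NN$, the factor $u^{\al+1/2}$ in \eqref{eq:33} is a monomial, so the formula itself defines a $C^\infty$ function on all of $\RR$, which I take as $\tilde{\varphi}_k^\al$. All of its derivatives are products of a polynomial (whose degree depends on $k$) and $e^{-u^2/2}$, hence bounded on $\RR$; this gives the Hölder and Zygmund conditions by iterated use of the mean value theorem, so $\tilde{\varphi}_k^\al\in\Lambda_\nu(\RR)$ for every $\nu\geq 0$.

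Now suppose $\al+1/2\notin\NN$ and write $\mu=\al+1/2$, $m=\lfloor\mu\rfloor$. I factor $\hfunk(u)=u^\mu\psi_k(u)$ on $\RR_+$, where $\psi_k$ is a constant multiple of $L_k^\al(u^2)e^{-u^2/2}$. Because $L_k^\al(u^2)$ is a polynomial in $u^2$, $\psi_k$ extends smoothly to $\RR$ with every derivative bounded thanks to the Gaussian factor. Leibniz's rule gives, for $u>0$ and $0\leq j\leq m$,
\begin{equation*}
\partial^j\tilde{\varphi}_k^\al(u)=\sum_{i=0}^j\binom{j}{i}\mu(\mu-1)\cdots(\mu-i+1)\,u^{\mu-i}\psi_k^{(j-i)}(u),
\end{equation*}
and every term tends to $0$ as $u\to 0^+$ since $\mu-i>0$ for $i\leq j\leq m$. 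A short induction on difference quotients at the origin then shows $\tilde{\varphi}_k^\al\in C^m(\RR)$ with $\partial^j\tilde{\varphi}_k^\al(0)=0$ for $0\leq j\leq m$.

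To finish, fix $\nu\in(0,\mu]$ and verify the appropriate Hölder or Zygmund condition; boundedness of $\tilde{\varphi}_k^\al$ is already given by \eqref{eq:9}. If $\nu\notin\NN_+$, set $n=\lfloor\nu\rfloor$. When $n\leq m-1$, the $(n+1)$-th derivative exists and is bounded on $\RR$ (the Leibniz terms involve $u^{\mu-i}$ with positive exponent near $0$ and Gaussian decay at infinity), so $\partial^n\tilde{\varphi}_k^\al$ is globally Lipschitz and a fortiori Hölder of any order in $(0,1]$. When $n=m$, the dominant contribution near $0$ comes from $u^{\mu-m}\psi_k(u)$ while the remaining Leibniz terms are $C^1$; I would split $\partial^m\tilde{\varphi}_k^\al(u+h)-\partial^m\tilde{\varphi}_k^\al(u)$ according to the signs of $u$ and $u+h$: for $u,u+h$ of the same sign the estimate reduces to standard Hölder continuity of $u^{\mu-m}$, while for $u\leq 0\leq u+h$ the inequality $(u+h)^{\mu-m}\leq|h|^{\mu-m}$ delivers the required bound of order $\mu-m\geq\nu-m$. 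For $\nu\in\NN_+$ with $\nu\leq\mu$ one has $\nu\leq m$, so $\partial^{\nu-1}\tilde{\varphi}_k^\al$ is $C^1$ with bounded derivative, and the second-order difference defining $\Lambda_\nu$ is immediately controlled by the Lipschitz one.

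The main obstacle will be this boundary case $n=m$: the other cases enjoy genuine $C^1$ control, but here the argument must pass through the non-smooth point $u=0$ while retaining the sharp Hölder exponent. It is manageable precisely because all lower-order derivatives vanish at $0$ and the singular term $u^{\mu-m}$ has the correct Hölder exponent $\mu-m$ hard-coded into it.
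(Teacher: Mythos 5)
Your argument is correct, but it takes a genuinely different route from the paper. The paper disposes of the lemma in two lines by citing machinery it has already built from the recurrence \eqref{eq:6}: the global $L^\infty$ bounds on $(\hfunk)^{(j)}$ in Lemma \ref{lm:Laguerre_Hermite_Linfty_derivatives} (together with the bound \eqref{eq:Laguerre_Hermite_Linfty_derrivatives_large_x} away from the origin) and the H\"older-type difference estimate of Lemma \ref{lm:Laguerre_Hermite_fun_diff}, all of which are proved inductively and carry explicit powers of $(k+1)$. You instead bypass the recurrence entirely by factoring $\hfunk(u)=u^{\al+1/2}\psi_k(u)$ with $\psi_k$ a polynomial in $u^2$ times a Gaussian, so that the whole question reduces to the regularity of the single power $u^{\al+1/2}$ at the origin; the only delicate point, the H\"older estimate for $\partial^{\lfloor\al+1/2\rfloor}\tilde{\varphi}_k^\al$ across $u=0$ with exponent $\al+1/2-\lfloor\al+1/2\rfloor$, is handled by the sign-splitting of $u$ and $u+h$, which works precisely because all lower derivatives vanish at $0$. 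Your approach buys a self-contained, elementary proof of the qualitative membership $\tilde{\varphi}_k^\al\in\Lambda_\nu(\RR)$, which is all the lemma asserts; the paper's approach buys quantitative control in $k$ that it needs anyway elsewhere (for \eqref{eq:14} and the verification of condition \eqref{cond:C}), so nothing is wasted by routing the lemma through those estimates. One small point worth making explicit in your write-up: when you pass from Lipschitz continuity of $\partial^n\tilde{\varphi}_k^\al$ to H\"older continuity of lower order, and when you bound the difference quotient for $|h|\geq 1$, you are implicitly using that $\partial^n\tilde{\varphi}_k^\al$ is bounded; this does follow from your Leibniz expansion (each term carries $u^{\mu-i}$ with $\mu-i>0$ near the origin and Gaussian decay at infinity), but it should be stated.
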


Notice that for $\al\in\{-1/2\}\cup[1/2,\infty)$ the functions $(\hfunk)'$ exist and are bounded, and observe that Lemmas \ref{lm:Laguerre_Hermite_Lipschitz} and \ref{lm:tensor_in_Lambda_nu} yield that for a given $p\in(0,1]$ and
\begin{equation*}
\al\in\Big(\Big\{-\frac{1}{2},\frac{1}{2},\ldots,P-\frac{1}{2}\Big\}\cup\Big[d\big(\frac{1}{p}-1\big)-\frac{1}{2},\infty\Big)\Big)^d,
\end{equation*}
where $P=\lfloor d(p^{-1}-1)\rfloor$, we have $\hfun\in\Lambda_{d(\frac{1}{p}-1)}(\RR^d_+)$.
 
For the proof of Lemma \ref{lm:Laguerre_Hermite_Lipschitz} we will need some auxiliary results.

\begin{lm}\label{lm:Laguerre_Hermite_fun_estim}
	Let $\al\geq-1/2$ and $j\in\NN$. Then, for any $c\in(0,1]$, we have 
	\begin{equation*}
	\big|(\hfunk)^{(j)}(u) \big|\lesssim\left\{
	\begin{array}{ll}
	u^{\al+1/2-j} (k+1)^{\al/2},& u\in\big(0,c(k+1)^{-1/2}\big),\\
	(k+1)^{j/2-1/4},&u\in\big(c(k+1)^{-1/2},1\big),
	\end{array}\right.
	\end{equation*}
	uniformly in $u$ and $k\in\NN$.
\end{lm}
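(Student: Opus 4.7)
The plan is to prove both bounds simultaneously by induction on $j\in\NN$, where the inductive statement is formulated uniformly for all $\al\geq -1/2$. This uniformity is essential because the recurrence \eqref{eq:6} produces a term with shifted parameter $\al+1$, and at each induction step the hypothesis must be invoked both for $\al$ and for $\al+1$.

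The base case $j=0$ follows directly from the identity $\hfunk(u)=\sqrt{2u}\,\Lfunk(u^2)$ and the classical Askey--Wainger--Muckenhoupt estimates \eqref{eq:38}. On the first subinterval, $u^2 k'\lesssim 1$, so the first line of \eqref{eq:38} yields $|\Lfunk(u^2)|\lesssim (u^2k')^{\al/2}$, and therefore $|\hfunk(u)|\lesssim u^{\al+1/2}(k+1)^{\al/2}$. On the second subinterval $u^2$ lies in $(c^2(k+1)^{-1},1]$, which falls under the second line of \eqref{eq:38}, giving $|\Lfunk(u^2)|\lesssim (u^2 k')^{-1/4}$ and hence $|\hfunk(u)|\lesssim (k+1)^{-1/4}$.

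For the inductive step, differentiate the recurrence \eqref{eq:6} another $(j-1)$ times via Leibniz's rule, using $(1/u)^{(i)}=(-1)^i i!/u^{i+1}$, to obtain
\begin{align*}
(\hfunk)^{(j)}(u) = {}&-2\sqrt{k}(\varphi_{k-1}^{\al+1})^{(j-1)}(u)+\frac{2\al+1}{2}\sum_{i=0}^{j-1}\binom{j-1}{i}\frac{(-1)^i i!}{u^{i+1}}(\hfunk)^{(j-1-i)}(u) \\
&-u(\hfunk)^{(j-1)}(u)-(j-1)(\hfunk)^{(j-2)}(u),
\end{align*}
where the last term is absent for $j=1$. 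On the first subinterval, the hypothesis applied to the parameter $\al+1$ gives $2\sqrt{k}|(\varphi_{k-1}^{\al+1})^{(j-1)}(u)|\lesssim u^{\al+5/2-j}k^{\al/2+1}$, which reduces to $u^{\al+1/2-j}(k+1)^{\al/2}$ thanks to the absorbing inequality $u^2(k+1)\leq c^2\leq 1$. The terms in the sum contribute $u^{-i-1}\cdot u^{\al+1/2-(j-1-i)}(k+1)^{\al/2}=u^{\al+1/2-j}(k+1)^{\al/2}$ for every $i$, and the final two terms are absorbed using $u\leq 1$. On the second subinterval, $u^{-1}\lesssim (k+1)^{1/2}$, so each negative power of $u$ is traded for a half-power of $k+1$; combined with the hypothesis every term evaluates to $(k+1)^{j/2-1/4}$, as required.

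The main obstacle is purely the bookkeeping generated by the singular factor $1/u$ in \eqref{eq:6}: after $(j-1)$ differentiations it produces the whole family of negative powers $u^{-i-1}$, $i=0,\ldots,j-1$, and one must verify that each of them combines with the corresponding inductive bound on $(\hfunk)^{(j-1-i)}$ to give precisely the target exponent in both regimes. The parameter shift $\al\mapsto\al+1$ in the first term on the right-hand side is the reason the induction must be run uniformly in $\al\geq -1/2$.
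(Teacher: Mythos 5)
Your proof is correct and follows essentially the same route as the paper's: induction on $j$ using the recurrence \eqref{eq:6} together with Leibniz's rule, invoking the inductive hypothesis for both $\al$ and $\al+1$, absorbing the extra factors via $u^2(k+1)\lesssim 1$ on the first subinterval and $u^{-1}\lesssim(k+1)^{1/2}$ on the second. The only cosmetic difference is that you verify the base case $j=0$ explicitly from \eqref{eq:33} and \eqref{eq:38}, where the paper simply cites the known estimates.
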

\begin{proof}
	Fix $c\in(0,1]$. We will apply the induction over $j$. For $j=0$ the estimates are known (see \cite[(1)]{Plewa_2019_JFAA}, and for the original result \cite[p.~699]{Askey_Wainger_1965_AJM} and \cite[p.~435]{Muckenhoupt_1970}). We assume that the claim holds for $j\in\NN$ and will prove it for $j+1$. By \eqref{eq:6} we have
	\begin{equation*}
	(\hfunk)^{(j+1)}(u)= \frac{d^j}{du^j}\Big(-2\sqrt{k} \varphi_{k-1}^{\al+1}(u)+\Big(\frac{2\al+1}{2u}-u\Big)\hfunk(u)\Big).
	\end{equation*}
	Thus, $\big|(\hfunk)^{(j+1)}(u)\big|$ can be estimated from above by a constant multiple of
	\begin{equation*}
	\sqrt{k} \big| (\varphi_{k-1}^{\al+1})^{(j)}(u)\big|+\big| (\hfunk)^{(j-1)}(u) \big|+u\big| (\hfunk)^{(j)}(u) \big|+\sum_{\ell=0}^j u^{-\ell-1}\big| (\hfunk)^{(j-\ell)}(u) \big|,
	\end{equation*}
	where we set $(\hfunk)^{(-1)}\equiv 0 $.
	Finally, by the inductive hypothesis we obtain
	\begin{equation*}
	\big|(\hfunk)^{(j+1)}(u)\big|\lesssim   u^{\al+1/2-j} (k+1)^{\al/2}\Big(u(k+1)^{1/2}+u^{-1} +u  \Big) \lesssim u^{\al-1/2-j} (k+1)^{\al/2}.
	\end{equation*}
	uniformly in $u\in(0,c(k+1)^{-1/2})$, and
	\begin{equation*}
	\big|(\hfunk)^{(j+1)}(u)\big|\lesssim (k+1)^{(j+1)/2-1/4},
	\end{equation*}
	uniformly in $u\in (c(k+1)^{-1/2},1)$. This finishes the proof.
\end{proof}

The following result is an analogue of Lemma \ref{lm:standard_Laguerre_sharp_estimates}.

\begin{lm}\label{lm:Laguerre_Hermite_sharp_estimates}
	Let $\al\geq -1/2$ and $j,\ell\in\NN$ be given. There exists small constant $c>0$ such that there holds
	\begin{equation*}
	\frac{d^j}{d u^j}\frac{\hfunk(u)}{u^{\al+1/2-\ell}}\simeq \left\{
	\begin{array}{ll}
	(k+1)^{\al/2} u^{\ell-j}, &\text{if}\quad \ell\geq j,\\
	(-1)^{\lceil \frac{j-\ell}{2}\rceil} (k+1)^{\al/2+\lceil \frac{j-\ell}{2}\rceil} u^{\frac{1-(-1)^{j-\ell}}{2}}, &\text{if}\quad \ell\leq j,
	\end{array}\right.
	\end{equation*}
	uniformly in $k\in\NN$ and $u\in(0,c(k+1)^{-1/2})$.
\end{lm}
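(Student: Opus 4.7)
The plan is to reduce the claim to Lemma \ref{lm:standard_Laguerre_sharp_estimates} via the substitution $v=u^{2}$. Since $\hfunk(u)=\sqrt{2u}\,\Lfunk(u^{2})$, one can rewrite
\begin{equation*}
\frac{\hfunk(u)}{u^{\al+1/2-\ell}}=\sqrt{2}\,u^{\ell}\,g_k(u^{2}),\qquad g_k(v):=\frac{\Lfunk(v)}{v^{\al/2}}.
\end{equation*}
Applying Lemma \ref{lm:standard_Laguerre_sharp_estimates} with the parameters there set to $(s,0)$ gives $g_k^{(s)}(v)\simeq(-1)^{s}(k+1)^{\al/2+s}$, uniformly in $v\in(0,c(k+1)^{-1})$, with $g_k^{(s)}$ of constant sign (the proof of Lemma \ref{lm:standard_Laguerre_sharp_estimates} extends verbatim from $\al\ge 0$ to $\al\ge -1/2$). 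For $u\in(0,c(k+1)^{-1/2})$ one has $v=u^{2}\le c^{2}(k+1)^{-1}$, so the estimate transfers to $g_k^{(s)}(u^{2})$.

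Next I would expand the $j$-th $u$-derivative by Leibniz together with the chain rule. A short induction for $\phi(u):=g_k(u^{2})$ establishes
\begin{equation*}
\phi^{(m)}(u)=\sum_{s=\lceil m/2\rceil}^{m} c_{m,s}\,u^{2s-m}\,g_k^{(s)}(u^{2}),
\end{equation*}
with coefficients satisfying $c_{m+1,s}=(2s-m)\,c_{m,s}+2\,c_{m,s-1}$, all $c_{m,s}\ge 0$, and $c_{m,\lceil m/2\rceil}>0$ (easy induction on $m$). Combining with Leibniz applied to $u^{\ell}\cdot\phi(u)$ yields
\begin{equation*}
\frac{d^{j}}{du^{j}}\bigl[u^{\ell}g_k(u^{2})\bigr]=\sum_{s\ge s_{\min}} D_{j,\ell,s}\,u^{\ell-j+2s}\,g_k^{(s)}(u^{2}),
\end{equation*}
where $s_{\min}:=\max\bigl(0,\lceil(j-\ell)/2\rceil\bigr)$ and each $D_{j,\ell,s}$ is a sum of products of positive binomial coefficients, falling factorials $\ell!/(\ell-i)!$, and the $c_{m,s}$'s; in particular $D_{j,\ell,s_{\min}}>0$.

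In the range $u\in(0,c(k+1)^{-1/2})$ the ratio of the summand indexed by $s+1$ to that indexed by $s$ is $O((k+1)u^{2})=O(c^{2})$, so after choosing $c$ small enough the $s=s_{\min}$ term dominates in absolute value and fixes the sign of the whole sum. Its contribution is $\simeq(-1)^{s_{\min}}(k+1)^{\al/2+s_{\min}}u^{\ell-j+2s_{\min}}$. A parity check shows that $\ell-j+2s_{\min}$ equals $\ell-j$ when $\ell\ge j$, equals $0$ when $\ell\le j$ with $j-\ell$ even, and equals $1$ when $\ell\le j$ with $j-\ell$ odd, while $s_{\min}=\lceil(j-\ell)/2\rceil$ in the latter case. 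This reproduces exactly the two branches of the claimed estimate, and constancy of sign of the leading term transfers to the full sum.

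The principal technical point is to verify $D_{j,\ell,s_{\min}}>0$ with no hidden cancellation: when $\ell<j$ and $j-\ell$ is odd, both indices $i=\ell$ and $i=\ell-1$ in the Leibniz expansion feed into the order $s=s_{\min}$, so one must observe that each contribution is a product of manifestly non-negative factors (binomial coefficients, falling factorials, and $c_{m,s}$'s), hence cannot cancel. Once this combinatorial positivity is in hand, the dominance of the leading term is an elementary consequence of the bound $(k+1)u^{2}\le c^{2}$, and the lemma follows.
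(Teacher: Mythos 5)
Your argument is correct, but it takes a genuinely different route from the paper. The paper proves this lemma by induction over $j$, applied directly to the Hermite-type functions: it uses the derivative recurrence \eqref{eq:6} to write $\frac{d^{j+1}}{du^{j+1}}\big(\hfunk(u)u^{\ell-\al-1/2}\big)$ as the $j$-th derivative of a sum of three terms of the same shape (with shifted $\ell$ or shifted $\al$), tracks the size and sign of each by the inductive hypothesis, identifies the dominant one, and observes that iterated decomposition of the first term terminates. You instead exploit the structural identity $\hfunk(u)=\sqrt{2u}\,\Lfunk(u^2)$ to pull the whole statement back to Lemma \ref{lm:standard_Laguerre_sharp_estimates} via $v=u^2$, and then control the change of variables through an explicit Fa\`a di Bruno/Leibniz expansion whose coefficients you show are all non-negative with a strictly positive leading one, so that the $s=s_{\min}$ term dominates once $(k+1)u^2\le c^2$ is small. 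Both the parity bookkeeping (the exponent $\ell-j+2s_{\min}$ reproducing $\frac{1-(-1)^{j-\ell}}{2}$) and the sign $(-1)^{s_{\min}}$ come out right. What your approach buys is that the sign analysis is concentrated in a single transparent combinatorial fact (no cancellation among manifestly non-negative coefficients), rather than being re-derived by a fresh induction; what it costs is the extra bookkeeping and one point you should make fully explicit: Lemma \ref{lm:standard_Laguerre_sharp_estimates} is stated for $\al\ge 0$, while here $\al\ge -1/2$. You only need its $\ell=0$ case, whose proof indeed goes through verbatim for all $\al>-1$ (the base case is Muckenhoupt's asymptotics, valid for $\al>-1$, and the recurrence \eqref{eq:19} holds there too; moreover for $\ell=0$ the first term of \eqref{eq:20} vanishes, so the sign-tracking is the simplest possible). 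With that remark added, the proof is complete.
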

\begin{proof}
	The proof is similar to the one of Lemma \ref{lm:standard_Laguerre_sharp_estimates}, therefore we will only sketch it. If $j=0$, then the estimate is well known (cf. \eqref{eq:22}). For $j\geq 1$ we use the induction and \eqref{eq:6}
	\begin{equation}\label{eq:23}
	\frac{d^{j+1}}{d u^{j+1}}\frac{\hfunk(u)}{u^{\al+1/2-\ell}}=\frac{d^j}{d u^j} \Big(\ell  \frac{\hfunk(u)}{u^{\al+1/2-\ell+1}} -  \frac{\hfunk(u)}{u^{\al-\ell-1/2}} -2\sqrt{k} \frac{\varphi_{k-1}^{\al+1}(u)}{u^{(\al+1)+1/2-\ell-1}}\Big).
	\end{equation}
	Note that if $\ell\geq j+1$, then the first component on the right hand side of the above identity is of the greatest size, $(k+1)^{\al/2+1/2} u^{\ell-j-1}$, and the others are strictly smaller.
	
	On the other hand, if $j\geq \ell$, then the second summand on the right hand side of \eqref{eq:23} is of the size (and sign)
	\begin{equation*}
	(-1)^{\lceil \frac{j-\ell-1}{2}\rceil+1} (k+1)^{\al/2+\lceil \frac{j-\ell-1}{2}\rceil} u^{\frac{1-(-1)^{j-\ell-1}}{2}},
	\end{equation*}
	and the third
	\begin{equation*}
	(-1)^{\lceil \frac{j-\ell-1}{2}\rceil+1} (k+1)^{\al/2+1+\lceil \frac{j-\ell-1}{2}\rceil} u^{\frac{1-(-1)^{j-\ell-1}}{2}}.
	\end{equation*}
	We see that the latter is the leading one. Moreover, by the simple identity $\lceil \frac{i-1}{2}\rceil+1=\lceil \frac{i+1}{2}\rceil$, $i\in\NN$, it can be written in the following form:
	\begin{equation*}
	(-1)^{\lceil \frac{j+1-\ell}{2}\rceil} (k+1)^{\al/2+\lceil \frac{j+1-\ell}{2}\rceil} u^{\frac{1-(-1)^{j+1-\ell}}{2}}.
	\end{equation*}
	Furthermore, the first component in \eqref{eq:23} can be decomposed similarly as in the proof of Lemma \ref{lm:Laguerre_Hermite_sharp_estimates}, and it gives the same growth and size as the remaining summands.
	
	This finishes the proof of the lemma.
\end{proof}

\begin{lm}\label{lm:Laguerre_Hermite_Linfty_derivatives}
	Let $j\in\NN$. For $\al\geq -1/2$ there holds
	\begin{equation}\label{eq:Laguerre_Hermite_Linfty_derrivatives_large_x}
	\left\Vert  (\hfunk)^{(j)}\right\Vert_{L^{\infty}(1/2,
		\infty)}\lesssim (k+1)^{(6j-1)/12},\qquad k\in\NN,
	\end{equation}
	whereas for $\al\in\{-1/2,1/2,\ldots,j-1/2\}\cup(j-1/2,\infty)$ there is
	\begin{equation}\label{eq:Laguerre_Hermite_Linfty_derrivatives}
	\left\Vert (\hfunk)^{(j)}\right\Vert_{L^{\infty}(\RR_+)}\lesssim (k+1)^{(6j-1)/12},\qquad k\in\NN.
	\end{equation}
\end{lm}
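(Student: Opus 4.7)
Induct on $j$; the base case $j=0$ is \eqref{eq:9}. For the inductive step, differentiate \eqref{eq:6} $j$ times and expand via Leibniz:
\begin{equation*}
(\hfunk)^{(j+1)}(u) = -2\sqrt{k}\,(\varphi_{k-1}^{\al+1})^{(j)}(u) + \sum_{\ell=0}^{j}\binom{j}{\ell}g^{(\ell)}(u)\,(\hfunk)^{(j-\ell)}(u),
\end{equation*}
where $g(u) = (2\al+1)/(2u) - u$; each $g^{(\ell)}$ is uniformly bounded on $u > 1/2$ for $\ell \geq 1$, while $g^{(0)}$ contains a single unbounded piece $-u$.

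\textbf{First claim ($u > 1/2$).} The inductive hypothesis applied to $\varphi_{k-1}^{\al+1}$ (whose parameter $\al+1 \geq 1/2$ still satisfies the assumption) bounds the leading Laguerre term by $\sqrt{k+1}\,(k+1)^{(6j-1)/12} = (k+1)^{(6(j+1)-1)/12}$, which is the target; the summands with $\ell \geq 1$ together with the bounded $(2\al+1)/(2u)$ part of $g^{(0)}$ are controlled by the inductive hypothesis even more easily. The only delicate summand is $-u(\hfunk)^{(j)}(u)$: naively the factor $u$ would cost an extra $\sqrt{k+1}$, exactly exhausting the available margin. Split $(1/2, \infty) = (1/2, C\sqrt{k+1}\,] \cup (C\sqrt{k+1}, \infty)$ with $C$ large. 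On $(1/2, C\sqrt{k+1}\,]$ the inductive bound yields $|u(\hfunk)^{(j)}(u)| \lesssim \sqrt{k+1}\,(k+1)^{(6j-1)/12}$, matching the required exponent. On $(C\sqrt{k+1}, \infty)$ we use that, by \eqref{eq:38} applied to $\Lfunk(u^2)$, $|\hfunk(u)| \lesssim e^{-\gamma u^2}$; an auxiliary iteration of \eqref{eq:6} propagates this exponential decay (with at most polynomial-in-$u$-and-$k$ prefactors) to every $(\hfunk)^{(j)}$, so $u\,|(\hfunk)^{(j)}(u)|$ is in fact uniformly $O(1)$ there.

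\textbf{Second claim ($u > 0$).} It remains to bound $|(\hfunk)^{(j)}|$ on $(0, 1/2]$. Lemma \ref{lm:Laguerre_Hermite_fun_estim} handles $(c(k+1)^{-1/2}, 1/2]$ with $(k+1)^{j/2-1/4} = (k+1)^{(6j-3)/12}$, while on $(0, c(k+1)^{-1/2})$ it supplies $u^{\al+1/2-j}(k+1)^{\al/2}$. When $\al \geq j - 1/2$ the exponent of $u$ is non-negative, so
\begin{equation*}
u^{\al+1/2-j}(k+1)^{\al/2} \lesssim (k+1)^{-(\al+1/2-j)/2+\al/2} = (k+1)^{(6j-3)/12}
\end{equation*}
suffices. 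In the remaining discrete case $\al+1/2 \in \{0,1,\ldots,j-1\}$, formula \eqref{eq:33} shows that $\hfunk$ equals a monomial $u^{\al+1/2}$ times the function $L_k^\al(u^2)e^{-u^2/2}$ which is analytic in $u$, so $\hfunk$ extends smoothly to $\RR$; applying Leibniz to this factorisation, together with polynomial-in-$k$ bounds on $L_k^\al$ and its derivatives on $[0,1/4]$ (obtained from \eqref{eq:38} and the identity $(L_k^\al)' = -L_{k-1}^{\al+1}$), delivers the required bound $(k+1)^{(6j-1)/12}$ on $[0, 1/2]$. The genuine obstacle throughout is the single term $-u(\hfunk)^{(j)}(u)$ in the inductive identity, whose unboundedness in $u$ forces the separate exponential-decay argument past the turning point $u \approx \sqrt{k+1}$.
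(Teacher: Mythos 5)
Your proof is correct, and its overall skeleton (induction on $j$ through the recursion \eqref{eq:6} for the half-line estimate, then Lemma \ref{lm:Laguerre_Hermite_fun_estim} plus a separate device for integer $\al+1/2$ near the origin) matches the paper's, but the two delicate steps are handled by genuinely different means. For \eqref{eq:Laguerre_Hermite_Linfty_derrivatives_large_x} the paper does not split at the turning point: it strengthens the inductive hypothesis to the weighted statement $\sup_{u\geq 1/2}|u^\ell(\hfunk)^{(j)}(u)|\lesssim (k+1)^{(6(j+\ell)-1)/12}$ for every $\ell\in\NN$, so that the troublesome factor $-u$ in \eqref{eq:6} is absorbed by shifting $\ell\mapsto\ell+1$; the decay beyond the turning point then only enters once, in the base case $j=0$ via \eqref{eq:9} and \eqref{eq:38}. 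Your split at $u\simeq C\sqrt{k+1}$ with an exponential-decay tail is equally valid, but note that the propagation of the bound $e^{-\gamma u^2}$ to all derivatives of $\hfunk$ is itself a second induction that you only assert; the paper's weighted induction buys you exactly that bookkeeping for free. For the second claim the paper covers $(0,c(k+1)^{-1/2})$ in the discrete case by citing Lemma \ref{lm:Laguerre_Hermite_sharp_estimates} with $\ell=\al+1/2$ (two-sided bounds of size $(k+1)^{\al/2+\lceil(j-\al-1/2)/2\rceil}u^{(1-(-1)^{j-\al-1/2})/2}\lesssim(k+1)^{j/2-1/4}$), whereas you re-derive an upper bound by Leibniz on $u^{\al+1/2}\cdot L_k^\al(u^2)e^{-u^2/2}$; this works, but only because the crude bound $|(L_k^\al)^{(i)}(v)|\lesssim(k+1)^{\al+i}$ combines with $u\lesssim(k+1)^{-1/2}$ to give $(k+1)^{j/2-1/4}$ — your phrase ``on $[0,1/2]$'' overstates it, since on $(c(k+1)^{-1/2},1/2]$ that crude bound alone would yield only $(k+1)^{\al/2+j}$ and you genuinely need Lemma \ref{lm:Laguerre_Hermite_fun_estim} (or the oscillatory regime of \eqref{eq:38}) there, as you correctly arranged in the preceding sentence.
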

\begin{proof}
	In order to prove \eqref{eq:Laguerre_Hermite_Linfty_derrivatives_large_x} we use the induction over $j$ to prove an auxiliary result: for every $\ell\in\NN$ there is
	\begin{equation*}
	\sup_{u\geq 1/2} \big| u^\ell (\hfunk)^{(j)}(u)\big| \lesssim (k+1)^{(6(j+\ell)-1)/12}.
	\end{equation*}
	For $j=0$ we simply apply \eqref{eq:9}. Now assume that the claim holds for some $j\in\NN$. Observe that by \eqref{eq:6} we have for any $\ell\in\NN$
	\begin{align*}
	\big|u^\ell (\hfunk)^{(j+1)}(u)\big|&= u^\ell \Big|\frac{d^j}{du^j}\Big(-2\sqrt{k} \varphi_{k-1}^{\al+1}(u)+\Big(\frac{2\al+1}{2u}-u\Big)\hfunk(u)\Big)\Big|\\
	&\lesssim (k+1)^{(6(j+\ell)+5)/12} + \sum_{i=0}^j u^{\ell-1-j+i} \big| (\hfunk)^{(i)}(u)\big| +u^{\ell+1}\big| (\hfunk)^{(j)}(u)\big|\\
	&\lesssim (k+1)^{(6(j+\ell)+5)/12},
	\end{align*}
	uniformly in $k\in\NN$ and $u\geq 1/2$. This proves the auxiliary claim. Observe that for $\ell=0$ we obtain \eqref{eq:Laguerre_Hermite_Linfty_derrivatives_large_x}.
	
	To justify \eqref{eq:Laguerre_Hermite_Linfty_derrivatives} is suffices to verify that for the imposed $\al$ the required bound holds on the interval $(1,1/2)$. In fact, this is true even with the smaller exponent $(2j-1)/4$. Indeed, if $\al\geq j-1/2$, then we invoke Lemma \ref{lm:Laguerre_Hermite_fun_estim}, whereas in the case $j>\al+1/2\in\NN$ we additionally apply Lemma \ref{lm:Laguerre_Hermite_sharp_estimates} with $\ell=\al+1/2$. This finishes the proof of the lemma.
\end{proof}

\begin{lm}\label{lm:Laguerre_Hermite_fun_diff}
	For $j\in\NN$ and $\al\in (j-1/2,j+1/2]$ there holds
	\begin{equation*}
	\big| (\hfunk)^{(j)}(u)-(\hfunk)^{(j)}(u')\big| \lesssim (k+1)^{(2j+1)/4}|u-u'|+(k+1)^{\al/2}|u-u'|^{\al+1/2-j},
	\end{equation*}
	uniformly in $k\in\NN$ and $u,u'\in(0,1)$.
\end{lm}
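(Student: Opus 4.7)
The plan is to split the range according to whether $u$ and $u'$ lie above or below the threshold $c(k+1)^{-1/2}$ appearing in Lemma \ref{lm:Laguerre_Hermite_fun_estim}, and in each subregion to write
\[
(\hfunk)^{(j)}(u)-(\hfunk)^{(j)}(u')=\int_{u'}^{u}(\hfunk)^{(j+1)}(t)\,dt
\]
and apply Lemma \ref{lm:Laguerre_Hermite_fun_estim} with $j$ replaced by $j+1$. Set $\beta:=\al+1/2-j\in(0,1]$ throughout; notice that this is exactly the Hölder exponent that appears on the right hand side of the claim.

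In the large regime $u,u'\ge c(k+1)^{-1/2}$ the cited lemma gives the uniform bound $|(\hfunk)^{(j+1)}(t)|\lesssim (k+1)^{(j+1)/2-1/4}=(k+1)^{(2j+1)/4}$ on the interval of integration, so the mean value theorem immediately yields the first target term $(k+1)^{(2j+1)/4}|u-u'|$. In the small regime $u,u'\le c(k+1)^{-1/2}$ the same lemma produces the integrable bound $|(\hfunk)^{(j+1)}(t)|\lesssim (k+1)^{\al/2}\,t^{\beta-1}$, and an explicit computation of $\int_{u'}^{u} t^{\beta-1}\,dt$ gives a constant times $(k+1)^{\al/2}\bigl(u^{\beta}-(u')^{\beta}\bigr)$. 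Invoking the elementary inequality $u^{\beta}-(u')^{\beta}\le (u-u')^{\beta}$, valid for $\beta\in(0,1]$ and $u\ge u'\ge 0$, converts this into the second target term $(k+1)^{\al/2}|u-u'|^{\al+1/2-j}$.

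The mixed regime $u'<c(k+1)^{-1/2}<u$ is then handled by inserting the splitting point $u_{0}:=c(k+1)^{-1/2}$, applying the triangle inequality, and using the two previous cases on the subintervals $(u_{0},u)$ and $(u',u_{0})$, together with the trivial bounds $|u-u_{0}|\le|u-u'|$ and $|u_{0}-u'|\le |u-u'|$. Since Lemma \ref{lm:Laguerre_Hermite_fun_estim} holds for every $c\in(0,1]$, the minor technicality that it is stated on open intervals is resolved by shrinking the constant slightly. The only nontrivial conceptual step is the use of the power-function Hölder inequality in the small regime: a crude bound by the triangle inequality applied to the individual values $(\hfunk)^{(j)}(u)$ and $(\hfunk)^{(j)}(u')$ would fail when $|u-u'|$ is much smaller than $u$ and $u'$, whereas integrating the derivative and exploiting $u^{\beta}-(u')^{\beta}\le(u-u')^{\beta}$ produces precisely the exponent $\al+1/2-j$ demanded by the statement.
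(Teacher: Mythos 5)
Your proof is correct and follows essentially the same route as the paper's: both express the difference as $\int_{u'}^{u}(\hfunk)^{(j+1)}(t)\,dt$, split the interval of integration at $c(k+1)^{-1/2}$, and conclude with $\int_{u'}^{u}t^{\al-1/2-j}\,dt\lesssim|u-u'|^{\al+1/2-j}$. The only cosmetic difference is that the paper expands $(\hfunk)^{(j+1)}$ through the recurrence \eqref{eq:6} and bounds the resulting terms with Lemma \ref{lm:Laguerre_Hermite_fun_estim} at orders $\le j$, whereas you invoke that lemma directly at order $j+1$, which is legitimate since it is stated for every $j\in\NN$.
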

\begin{proof}
	Fix $1>u>u'>0$. Observe that \eqref{eq:6} and Lemma \ref{lm:Laguerre_Hermite_fun_estim} permit to estimate 
	\begin{align*}
	\big|(\hfunk)^{(j)}(u)-(\hfunk)^{(j)}(u') \big|&=\Big|\int_{u'}^u \frac{d^j}{ds^{j}}\Big(-2\sqrt{k} \varphi_{k-1}^{\al+1}(s)+\Big(\frac{2\al+1}{2s}-s\Big)\hfunk(s) \Big)\,ds  \Big|\\
	&\lesssim \int_{u'}^u \Big(\sqrt{k} \big| (\varphi_{k-1}^{\al+1})^{(j)}(s)\big|+\sum_{\ell=0}^j s^{-\ell-1}\big| (\hfunk)^{(j-\ell)}(s) \big|\\
	&\qquad+\big| (\hfunk)^{(j-1)}(s) \big|+s\big| (\hfunk)^{(j)}(s) \big| \Big)\,ds\\
	&\lesssim |u-u'| (k+1)^{(2j+1)/4} + \sum_{\ell=0}^j \int_{u'}^u s^{-\ell-1}\big| (\hfunk)^{(j-\ell)}(s)\big|\,ds,
	\end{align*}		
	where we set $(\hfunk)^{(-1)}\equiv 0$.	Now notice that Lemma \ref{lm:Laguerre_Hermite_fun_estim} implies
	\begin{align*}
	&\int_{u'}^u s^{-\ell-1}\big| (\hfunk)^{(j-\ell)}(s)\big|\,ds\\
	&=\int_{[u',u]\cap[(k+1)^{-1/2},1)} s^{-\ell-1}\big| (\hfunk)^{(j-\ell)}(s)\big|\,ds+ \int_{[u',u]\cap(0,(k+1)^{-1/2})} s^{-\ell-1}\big| (\hfunk)^{(j-\ell)}(s)\big|\,ds\\
	&\lesssim |u-u'|(k+1)^{(2j+1)/4} + (k+1)^{\al/2}\int_{u'}^u s^{\al-1/2-j}\,ds.	
	\end{align*}		  
	Finally, since $\al\in(j-1/2,j+1/2]$ we see that
	\begin{equation*}
	\int_{u'}^u s^{\al-1/2-j}\,ds\lesssim |u-u'|^{\al+1/2-j}.
	\end{equation*}
	Combining the above gives the claim.
\end{proof}

\begin{proof}[Proof of Lemma \ref{lm:Laguerre_Hermite_Lipschitz}]
	We verify that the functions $\tilde{\varphi}_k^\al$ satisfy the condition in definition of $\Lambda_{\nu}(\RR)$.
	If $\al+1/2$ is an integer then the claim follows from \eqref{eq:Laguerre_Hermite_Linfty_derrivatives}. On the other hand, if $\al+1/2\notin \NN$, then we apply \eqref{eq:Laguerre_Hermite_Linfty_derrivatives}, \eqref{eq:Laguerre_Hermite_Linfty_derrivatives_large_x}, and Lemma \ref{lm:Laguerre_Hermite_fun_diff}.
%
%
%
\end{proof}

\subsection{Hardy's inequality}
The kernels of the operators $\Rop$ (cf. \eqref{R_def}) associated with the Laguerre functions of Hermite type, are defined by 
\begin{equation}\label{eq:12}
\Rop(x,y)=\sum_{n\in\NN^d} r^{\ven} \hfun(x)\hfun(y) ,
\end{equation}
and, in the one-dimensional case, admit the explicit form (cf. \cite{Szego1959})
\begin{equation}\label{eq:2}
\Rop(u,v)=\frac{2 (uv)^{1/2}}{(1-r)r^{\al/2}}\exp\left(-\frac{1}{2}\frac{1+r}{1-r}(u^2+v^2)\right)I_{\al }\left(\frac{2r^{1/2}}{1-r}uv\right).
\end{equation}

Unfortunately, it is highly complicated to proceed as in \cite{Plewa_2019_JFAA} while estimating derivatives of $\Rop$ of order higher than $2$. The cancellations between the underlying Bessel functions are not well understood yet. Therefore, we choose an approach similar to the one applied in the case of the Jacobi expansions \cite{Plewa_2019_JAT}. This method relies on the following formula
\begin{equation}\label{eq:36}
I_\al(z)=z^{\al} \int_{-1}^1 e^{-zs} \Pi_\al(ds),\qquad |\arg z|<\pi,\ \al\geq -1/2,
\end{equation}
where $\Pi_\al$ in the case $\al>-1/2$ is a measure with the density given by
\begin{equation*}
\Pi_\al(ds)=\frac{(1-s^2)^{\al-1/2}ds}{\sqrt{\pi}\Gamma(\al+1/2)},
\end{equation*}
whereas for $\al=-1/2$ it is an atomic measure of the form $\Pi_{-1/2}=\frac{\delta_{-1}+\delta_{1}}{\sqrt{2\pi}}$.

Hence, by \eqref{eq:2} we have for $\al>-1/2$
\begin{equation*}
\Rop(u,v)= \frac{2^{\al+1} (uv)^{\al+1/2}}{(1-r)^{\al+1}} E_r^\al(u,v),
\end{equation*}
where by $E_r^{\al}(u,v)$ we denote
\begin{equation}\label{eq:40}
\exp\Big(-\frac{1}{2}\frac{1+r}{1-r}(v-u)^2-\frac{1-r}{(1+\sqrt{r})^2}uv\Big)\int_{-1}^1 \exp\Big(-\frac{2\sqrt{r}}{1-r}uv(s+1)  \Big) \frac{(1-s^2)^{\al-1/2}ds}{\sqrt{\pi}\Gamma(\al+1/2)}.
\end{equation}
Note that if $\al=-1/2$, then  
\begin{equation}\label{eq:4}
R_r^{-1/2}(u,v)=\frac{2}{\sqrt{\pi} \sqrt{1-r}}\exp\Big(-\frac{1}{2}\frac{1+r}{1-r}(u^2+v^2) \Big)\cosh\Big(\frac{2\sqrt{r}uv}{1-r} \Big).
\end{equation}

Now we have the following proposition.

\begin{prop}\label{prop:Laguerre_Hermite_R_deriv}
	For $j\in\NN$ and $\al\in\{-1/2,1/2,\ldots,j-1/2\}\cup(j-1/2,\infty)$ there holds
\begin{equation*}
\sup_{u>0} \Big\Vert \partial^j_u \Rop(u,\cdot)\Big\Vert_{L^2(\RR_+)}\lesssim (1-r)^{-\frac{1+2j}{4}},\qquad r\in(0,1).
\end{equation*}
\end{prop}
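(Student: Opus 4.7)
The plan is to exploit the integral representation of the kernel, using \eqref{eq:40} for $\al>-1/2$ and the explicit formula \eqref{eq:4} for $\al=-1/2$, then to differentiate under the integral sign and reduce the $L^2(\RR_+)$ estimate to Gaussian moment integrals in~$v$. The direct route via Parseval's identity and the $L^\infty$ bounds of Lemma \ref{lm:Laguerre_Hermite_Linfty_derivatives} is too lossy: it yields roughly $(1-r)^{-(6j+5)/12}$, which is strictly weaker than the target $(1-r)^{-(1+2j)/4}$.

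For $\al=-1/2$, I would write $2\cosh z=e^z+e^{-z}$ in \eqref{eq:4} and complete the square, exhibiting $R_r^{-1/2}(u,v)$ as a sum of two expressions of the form $(1-r)^{-1/2}\exp\!\bigl(-\tfrac{1+r}{2(1-r)}(v\mp u)^2-c_r uv\bigr)$, i.e.\ Gaussians in $v$ of scale $\sqrt{1-r}$ centred near $v=\pm u$. Applying $\partial_u^j$ produces polynomials of degree at most $j$ in $(v-u)/(1-r)$ and in $u/(1-r)$ (arising from the cross term) multiplied by the same Gaussians. Using the elementary identity $\int(v-u)^{2k}\exp(-A(v-u)^2)\,dv=\Gamma(k+1/2)A^{-k-1/2}$ with $A\simeq(1-r)^{-1}$, the $L^2$-norm in $v$ of the leading piece is $(1-r)^{-(2j-1)/4}$; combined with the prefactor $(1-r)^{-1/2}$ this gives precisely $(1-r)^{-(2j+1)/4}$.

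For $\al>-1/2$ I would write $\Rop(u,v)=\frac{2^{\al+1}u^{\al+1/2}v^{\al+1/2}}{(1-r)^{\al+1}}E_r^\al(u,v)$ and apply Leibniz's rule,
\begin{equation*}
\partial_u^j\Rop(u,v)=\frac{2^{\al+1}v^{\al+1/2}}{(1-r)^{\al+1}}\sum_{i=0}^{j}\binom{j}{i}\partial_u^i(u^{\al+1/2})\,\partial_u^{j-i}E_r^\al(u,v).
\end{equation*}
The hypothesis on $\al$ ensures that $\partial_u^i(u^{\al+1/2})$ carries a nonnegative power of $u$ for every $i\leq j$ that is actually present: either $\al+1/2\in\{0,1,\dots,j\}$, in which case the Leibniz terms with $i>\al+1/2$ vanish, or $\al+1/2>j$, so $u^{\al+1/2-i}$ is never singular. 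Each $\partial_u^{j-i}E_r^\al$ is a finite linear combination (differentiating under the integral in \eqref{eq:40}) of the Gaussian times polynomial expressions in $(v-u)/(1-r)$ and $uv/(1-r)$, with bounded coefficients depending on $s\in[-1,1]$; the $s$-integration against $\Pi_\al$ is harmless since the measure is finite. The factor $v^{\al+1/2}$ is split via the inequality $v^{\al+1/2}\lesssim u^{\al+1/2}+|v-u|^{\al+1/2}$: the first piece combines with the Leibniz factor $u^{\al+1/2-i}$ to produce a power of $u$, and the second piece is absorbed into a Gaussian moment integral just as in the case $\al=-1/2$.

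The main obstacle is the bookkeeping required to verify that, after the $v$-integration and supremum in $u>0$, every resulting prefactor collapses to at most $(1-r)^{-(1+2j)/4}$. The condition on $\al$ is exactly what forbids a non-integrable singularity $u^{\al+1/2-i}$ at the origin from destroying uniformity in~$u$; at the other end, large $u$ is controlled by the decay provided by the cross term $\exp\bigl(-\tfrac{1-r}{(1+\sqrt r)^2}uv\bigr)$ in $E_r^\al$, and the polynomial $u$-factors introduced by derivatives of that term are absorbed by the corresponding exponential. Once those cases are separated, the estimate reduces to standard Gaussian asymptotics and yields the stated bound.
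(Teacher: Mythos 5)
Your overall framework --- the representation \eqref{eq:40}/\eqref{eq:4}, Leibniz's rule, differentiation under the integral, and reduction to Gaussian moment integrals --- is the same as the paper's, your $\al=-1/2$ computation is correct and matches the paper's, and so does your observation that the hypothesis on $\al$ makes every surviving Leibniz factor $\partial_u^i(u^{\al+1/2})$ carry a nonnegative power of $u$. However, for $\al>-1/2$ there is a genuine gap at the decisive step. You declare that ``the $s$-integration against $\Pi_\al$ is harmless since the measure is finite,'' i.e.\ you bound the $s$-integral by a constant, and then try to tame the prefactor $(uv)^{\al+1/2}(1-r)^{-(\al+1)}$ by the split $v^{\al+1/2}\lesssim u^{\al+1/2}+|v-u|^{\al+1/2}$ together with the cross-term decay $\exp\bigl(-\tfrac{1-r}{(1+\sqrt r)^2}uv\bigr)$. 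This cannot work: the decay of the $s$-integral in $uv/(1-r)$ is precisely the point of the representation. The paper shows (for $r\geq 1/2$) that
\begin{equation*}
\int_{-1}^1 \exp\Big(-\frac{2\sqrt r}{1-r}uv(1+s)\Big)(1-s^2)^{\al-1/2}\,ds\;\lesssim\;\min\Big(1,\Big(\frac{1-r}{uv}\Big)^{\al+1/2}\Big),
\end{equation*}
and it is this factor that cancels $\bigl(uv/(1-r)\bigr)^{\al+1/2}$, leaving a bound $(1-r)^{-(j+1)/2}\exp\bigl(-c(v-u)^2/(1-r)\bigr)$ with no residual polynomial growth in $u,v$.

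To see that your majorant is genuinely too large (not merely awkward to estimate), take $j=0$, $\al=1/2$, $u=(1-r)^{-1/2}$ and $v\in(u,u+\sqrt{1-r})$. There $uv\geq(1-r)^{-1}$, the Gaussian is $\geq e^{-1/2}$, and $(1-r)uv\leq 2$, so the cross term is bounded below; hence your bound for $|R_r(u,v)|$ (with the $s$-integral replaced by a constant) is $\gtrsim (1-r)^{-5/2}$ on an interval of length $\sqrt{1-r}$, giving an $L^2_v$-norm $\gtrsim(1-r)^{-9/4}$ against the target $(1-r)^{-1/4}$. The splitting does not help: on this set the piece $u^{2\al+1}$ is of the same size as $(uv)^{\al+1/2}$, and in the whole intermediate range $\sqrt{1-r}\lesssim u\lesssim(1-r)^{-1/2}$, $v\approx u$, the cross term provides no decay ($e^{-c(1-r)u^2}\simeq1$) while $u^{2\al+1}\gg(1-r)^{\al+1/2}$; the deficit is exactly $(1-r)^{-(2\al+1)}$, which is what the discarded $s$-decay would supply. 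A secondary point: the displayed $s$-integral estimate degenerates as $r\to0$ (the paper therefore disposes of $r\in(0,1/2)$ separately by Parseval, where a constant bound suffices), so a corrected version of your argument would need that case split as well.
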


\begin{proof}
	Observe that Parseval's identity and \eqref{eq:Laguerre_Hermite_Linfty_derrivatives} yield
	\begin{equation*}
	\sup_{u>0} \Big\Vert \partial^j_u \Rop(u,\cdot)\Big\Vert_{L^2(\RR_+)}\leq \Big(\sum_{k=0}^\infty 2^{-2k} \Vert (\hfun)^{(j)}\Vert_{L^\infty(\RR_+)}^2  \Big)^{1/2}\lesssim 1, 
	\end{equation*}
	uniformly in $r\in(0,1/2)$. Hence, we can focus only on the case $r\in[1/2,1)$.
	
	We shall firstly consider the situation when $\al\geq 1/2$ and $j\in\NN_+$ (for $j=0$ see \cite[Lemma~3.1]{Plewa_2019_JFAA}). Note that for $\ell\in\NN$ such that $\ell\leq j$ we can write $\partial^{\ell}_u E_r^\al(u,v)$, where $E^\al_r(u,v)$ is defined in \eqref{eq:40}, as
\begin{align*}
&\qquad\int_{-1}^1 \exp\Big(-\frac{1}{2}\frac{1+r}{1-r}(v-u)^2-\frac{1-r}{(1+\sqrt{r})^2}uv-\frac{2\sqrt{r}}{1-r}uv(s+1)  \Big)\\
&\times\sum_{\stackrel{k,i\geq 0}{k+2i=\ell}}c_{k,i}^\ell (1-r)^{-k}\Big((1+r)(u-v)+\frac{(1-r)^2}{(1+\sqrt{r})^2}y+2\sqrt{r}uv(s+1) \Big)^k \Big(\frac{1+r}{1-r}\Big)^i     \Pi_\al (ds),
\end{align*}
where $c_{k,i}^l$ are certain constants (cf. \cite[p.~812]{Nowak_Szarek_2012_JMAA}). Hence,
\begin{align*}
\Big| \partial^{\ell}_u E_r^\al(u,v)\Big|
&\lesssim \exp\Big(-\frac{1}{2}\frac{(v-u)^2}{1-r}\Big)(1-r)^{-\ell/2}\bigg(1+\min\Big(\frac{\sqrt{1-r}}{u},(1-r)^{3/2}v\Big)\bigg)^\ell\\
&\qquad\times  \int_{-1}^1 \exp\Big(-\frac{\sqrt{r}}{1-r}uv(s+1)  \Big)(1-s^2)^{\al-1/2}\,ds\\
&\lesssim (1-r)^{-\ell/2} \exp\Big(-\frac{1}{2}\frac{(v-u)^2}{1-r}\Big)\bigg(1+\min\Big(\frac{\sqrt{1-r}}{u},(1-r)^{3/2}v\Big)\bigg)^\ell\\
&\qquad\times  \int_{-1}^1 \exp\Big(-\frac{\sqrt{r}}{1-r}uv(s+1)  \Big)(1+s)^{\al-1/2}\,ds,
\end{align*}
uniformly in $u,v>0$ and $r\in(1/2,1)$. The latter integral is estimated by a constant. On the other hand, again uniformly in $u,v>0$ and $r\in(1/2,1)$,
\begin{align*}
\int_{-1}^1 \exp\Big(-\frac{\sqrt{r}}{1-r}uv(s+1)  \Big)(1+s)^{\al-1/2}\,ds&\lesssim \Big(\frac{1-r}{uv}\Big)^{\al-1/2}\int_{0}^\infty \exp\Big(-\frac{\sqrt{r}}{1-r}uvs  \Big)\,ds\\
&\simeq \Big(\frac{1-r}{uv}\Big)^{\al+1/2}.
\end{align*}

Now we are ready to estimate $\partial^j_u\Rop(u,v)$. Combining the above we obtain
\begin{align*}
\big|\partial^j_u\Rop(u,v)\big|
&\leq \frac{2^{\al+1} v^{\al+1/2}}{(1-r)^{\al+1}}\sum_{\ell} {j\choose \ell} \big|\partial_u^\ell E_r^\al(u,v) \big| \big|\partial_u^{j-\ell}u^{\al+1/2}\big|\\
&\lesssim \Big(\frac{uv}{1-r}\Big)^{\al+1/2}\sum_{\ell} \Big(\frac{\sqrt{1-r}}{u}\Big)^{j-\ell} (1-r)^{-(j+1)/2}\exp\Big(-\frac{1}{2}\frac{(v-u)^2}{1-r}\Big)\\
&\quad\times
 \bigg(1+\min\Big(\frac{\sqrt{1-r}}{u},(1-r)^{3/2}v\Big)\bigg)^\ell \min\Big(1, \Big(\frac{1-r}{uv}\Big)^{\al+1/2}\Big)\\
 &\lesssim (1-r)^{-(j+1)/2} \Big(\frac{\sqrt{1-r}}{u}\Big)^{j}  \min\Big(\frac{uv}{1-r},1\Big)^{\al+1/2}\exp\Big(-\frac{1}{2}\frac{(v-u)^2}{1-r}\Big)\\
 &\quad\times\max_{\ell}\Big(\frac{u}{\sqrt{1-r}}+\min\big(1,(1-r)uv\big)\Big)^\ell,
\end{align*}
where $\ell\in\{0,\ldots,j\}$ if $j\leq \al+1/2$, and $\ell\in\{j-(\al+1/2),\ldots,j\}$ if $\al+1/2$ is an integer and $j>\al+1/2$. Observe that if $u\geq \sqrt{1-r}$, then
\begin{equation*}
\big|\partial^j_u\Rop(u,v)\big|\lesssim  (1-r)^{-(j+1)/2}\exp\Big(-\frac{1}{2}\frac{(v-u)^2}{1-r}\Big).
\end{equation*}
In the other case, $u\leq \sqrt{1-r}$ we estimate firstly assuming that $j\leq\al+1/2$
\begin{align*}
\big|\partial^j_u\Rop(u,v)\big|&\lesssim (1-r)^{-(j+1)/2} \Big(\frac{\sqrt{1-r}}{u}\Big)^{j} \Big(\frac{uv}{1-r}\Big)^{j} \exp\Big(-\frac{1}{2}\frac{(v-u)^2}{1-r}\Big)\\
&\lesssim (1-r)^{-(j+1)/2} \Big(\frac{v}{\sqrt{1-r}}\Big)^{j} \exp\Big(-\frac{1}{2}\frac{(v-u)^2}{1-r}\Big)\\
&\lesssim  (1-r)^{-(j+1)/2} \Big(\frac{|v-u| +u}{\sqrt{1-r}}\Big)^{j} \exp\Big(-\frac{1}{2}\frac{(v-u)^2}{1-r}\Big)\\
&\lesssim (1-r)^{-(j+1)/2} \exp\Big(-\frac{1}{4}\frac{(v-u)^2}{1-r}\Big).
\end{align*}
If $\al+1/2\in\NN$ is smaller than $j$, then we bound $\big|\partial^j_u\Rop(u,v)\big|$ by a constant multiple of $(1-r)^{-(j+1)/2}$ times
\begin{align*}
&\hspace{-0.5cm}\Big(\frac{\sqrt{1-r}}{u}\Big)^{j} \Big(\frac{uv}{1-r}\Big)^{\al+1/2} \Big(\frac{u}{\sqrt{1-r}}+(1-r)uv\Big)^{j-\al-1/2}\exp\Big(-\frac{1}{2}\frac{(v-u)^2}{1-r}\Big)\\
&\lesssim  \big(1+(1-r)^{3/2}y\big)^{j}\Big(\frac{\frac{v}{\sqrt{1-r}}}{1+(1-r)^{3/2}v}\Big)^{\al+1/2}\exp\Big(-\frac{1}{2}\frac{(v-u)^2}{1-r}\Big)\\
&\lesssim\big(1+(1-r)^{3/2}\big((v-u)+u\big)\big)^{j-\al-1/2} \Big(\frac{(v-u)+u}{\sqrt{1-r}}\Big)^{\al+1/2} \exp\Big(-\frac{1}{2}\frac{(v-u)^2}{1-r}\Big)\\
&\lesssim \exp\Big(-\frac{1}{4}\frac{(v-u)^2}{1-r}\Big).
\end{align*}

Combining the above we arrive at
\begin{align}\label{eq:5}
\begin{split}
\sup_{u>0} \big\Vert \partial_u^j \Rop(u,\cdot)\big\Vert_{L^2(\RR_+)}&\lesssim (1-r)^{-(j+1)/2} \sup_{u>0} \Big( \int_{\RR_+}  \exp\Big(-\frac{1}{4}\frac{(v-u)^2}{1-r}\Big)\,dv\Big)^{1/2}\\
&= 2\pi^{1/4} (1-r)^{-(2j+1)/4},
\end{split}
\end{align}
and this completes the proof of the proposition for $\al\geq 1/2$.

Now we move on to the case $\al<1/2$. In fact, we need to consider only $\al=-1/2$ and $j\in\NN$, since for $\al\in(-1/2,1/2)$ only $j=0$ is allowed, and this was already done in author's previous paper (see \cite[Lemma~3.1]{Plewa_2019_JFAA}). By \eqref{eq:4} we obtain for some constants $c_{k,i}^j$ and $\tilde{c}_{k,i}^j$ the following equality
\begin{align*}
\begin{split}
\partial_u^j R_r^{-1/2}(u,v)
&=\frac{1}{\sqrt{\pi} \sqrt{1-r}}\bigg(\exp\Big(-\frac{1}{2}\frac{1+r}{1-r}(v-u)^2-\frac{1-r}{(1+\sqrt{r})^2}uv -\frac{4\sqrt{r}}{1-r}uv\Big)\\
&\quad\times \sum c_{k,i}^j \Big(\frac{1+r}{1-r}(u-v)+\frac{1-r}{(1+\sqrt{r})^2}v+\frac{4\sqrt{r}}{1-r}v\Big)^k\Big(\frac{1+r}{1-r}\Big)^i \\
&\quad +\exp\Big(-\frac{1}{2}\frac{1+r}{1-r}(v-u)^2-\frac{1-r}{(1+\sqrt{r})^2}uv\Big)\\
&\quad\times\sum\tilde{c}_{k,i}^j \Big(\frac{1+r}{1-r}(u-v)+\frac{1-r}{(1+\sqrt{r})^2}v\Big)^k\Big(\frac{1+r}{1-r}\Big)^i \bigg),
\end{split}
\end{align*}
where in both sums the summation goes over all $k,i\geq 0$ such that $k+2i=j$. 
Hence, 
\begin{align*}
\big|\partial_u^j R_r^{-1/2}(u,v)\big|&\lesssim (1-r)^{-(j+1)/2} \Big(1+\min\big( \frac{v}{\sqrt{1-r}},\frac{\sqrt{1-r}}{u}\big)\Big)^j \exp\Big(-\frac{1}{2}\frac{1+r}{1-r}(v-u)^2\Big)\\
&\lesssim (1-r)^{-(j+1)/2}  \Big(\frac{v-u}{\sqrt{1-r}}+1\Big)^j \exp\Big(-\frac{1}{2}\frac{1+r}{1-r}(v-u)^2\Big)\\
&\lesssim (1-r)^{-(j+1)/2} \exp\Big(-\frac{1}{2}\frac{1+r}{1-r}(v-u)^2\Big),
\end{align*}
where in the last but one inequality we used the simple estimate 
\begin{equation*}
\min(a+b,b^{-1})\leq a+1,\qquad a,b>0.
\end{equation*}
The last step is the same as in \eqref{eq:5}. This concludes the proof of the proposition.
\end{proof}

Before we state Hardy's inequality associated with the Laguerre functions of Hermite type we will prove some auxiliary results. The next one complements the estimate from Proposition \ref{prop:Laguerre_Hermite_R_deriv}. Essentially, it says that the mentioned bound holds also for $\al\in(j-3/2,j-1/2)$, $j\in\NN_+$, but only away from the origin. 

\begin{lm}\label{lm:Laguerre_Hermite_R_deriv_large_x}
	If $j\in\NN$ and $\al\in(j-1/2,j+1/2)$, then
	\begin{equation*}
	\sup_{u\geq 1/2} \Big\Vert \partial^{j+1}_u \Rop(u,\cdot)\Big\Vert_{L^2(\RR_+)}\lesssim (1-r)^{-\frac{3+2j}{4}},\qquad r\in(0,1).
	\end{equation*}
\end{lm}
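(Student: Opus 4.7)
The plan is to split on whether $r$ is close to $1$ or not. When $r\in(0,1/2)$, the restriction $u\geq 1/2$ together with Parseval's identity reduces the task to bounding
\begin{equation*}
\sup_{u\geq 1/2}\Vert \partial^{j+1}_u\Rop(u,\cdot)\Vert^2_{L^2(\RR_+)}\leq \sum_{k\in\NN} r^{2k}\Vert (\hfunk)^{(j+1)}\Vert_{L^\infty(1/2,\infty)}^2,
\end{equation*}
and estimate \eqref{eq:Laguerre_Hermite_Linfty_derrivatives_large_x} (which holds for every $\al\geq -1/2$, in particular for our range) furnishes a polynomial-in-$k$ bound on the right-hand side. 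Hence the series converges uniformly on $(0,1/2]$, and since $(1-r)^{-(3+2j)/4}$ is bounded below on the same range, the inequality is immediate.

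For $r\in[1/2,1)$ I would follow the argument in the proof of Proposition \ref{prop:Laguerre_Hermite_R_deriv}. Writing
\begin{equation*}
\Rop(u,v)=\frac{2^{\al+1}(uv)^{\al+1/2}}{(1-r)^{\al+1}}E_r^\al(u,v),
\end{equation*}
which is valid whenever $\al>-1/2$, the Leibniz rule yields
\begin{equation*}
\partial^{j+1}_u\Rop(u,v)=\frac{2^{\al+1}v^{\al+1/2}}{(1-r)^{\al+1}}\sum_{\ell=0}^{j+1}\binom{j+1}{\ell}\partial^{j+1-\ell}_u(u^{\al+1/2})\,\partial^{\ell}_u E_r^\al(u,v).
\end{equation*}
Since for $\al\in(j-1/2,j+1/2)$ the number $\al+1/2$ is not an integer, none of the derivatives of $u^{\al+1/2}$ vanishes; however, the most singular factor produced, namely $u^{\al-j-1/2}$ (arising when $\ell=0$), is bounded on $[1/2,\infty)$, and the same is true of each of the remaining factors $u^{\al+1/2-(j+1-\ell)}$. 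Inserting the pointwise estimate for $|\partial^\ell_u E_r^\al(u,v)|$ derived in the cited proof, and using $u\geq 1/2\geq\sqrt{1-r}$ so that $\sqrt{1-r}/u\leq 1$, the case ``$u\geq\sqrt{1-r}$'' of the argument there transplants verbatim, giving
\begin{equation*}
|\partial^{j+1}_u\Rop(u,v)|\lesssim (1-r)^{-(j+2)/2}\exp\Big(-\frac{(v-u)^2}{2(1-r)}\Big),
\end{equation*}
uniformly in $u\geq 1/2$, $v>0$ and $r\in[1/2,1)$. Integration in $v$ supplies a factor $(1-r)^{1/4}$, producing the asserted bound $(1-r)^{-(3+2j)/4}$.

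The only real point to watch is that the derivative $\partial^{j+1-\ell}_u(u^{\al+1/2})$ no longer vanishes for any $\ell$, as it did in Proposition \ref{prop:Laguerre_Hermite_R_deriv} when $\al+1/2$ was an integer not exceeding $j$, so negative powers of $u$ now appear in every summand. These are precisely neutralised by the hypothesis $u\geq 1/2$, after which the proof of the preceding proposition goes through with no essential modification.
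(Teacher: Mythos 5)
Your proposal is correct and follows essentially the same route as the paper's proof: Parseval's identity together with the $L^\infty(1/2,\infty)$ derivative bounds \eqref{eq:Laguerre_Hermite_Linfty_derrivatives_large_x} for small $r$, and for $r$ close to $1$ a rerun of the Leibniz/$E_r^\al$ computation from Proposition \ref{prop:Laguerre_Hermite_R_deriv}, with the hypothesis $u\geq 1/2$ absorbing both the non-vanishing negative powers of $u$ coming from $\partial_u^{j+1-\ell}u^{\al+1/2}$ and the factors $\sqrt{1-r}/u$, followed by integration of the Gaussian in $v$. The only cosmetic slip is the claim $1/2\geq\sqrt{1-r}$, which fails for $r\in[1/2,3/4)$; what is actually needed (and what the paper uses) is merely $\sqrt{1-r}\lesssim u$, which holds with constant $\sqrt{2}$ on your range.
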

\begin{proof}
	It suffices to proceed as in the proof of Proposition \ref{prop:Laguerre_Hermite_R_deriv} with some minor changes. For $r\in(0,1/2]$ use \eqref{eq:Laguerre_Hermite_Linfty_derrivatives_large_x} instead of  \eqref{eq:Laguerre_Hermite_Linfty_derrivatives}. If $r\in(1/2,1)$, then we arrive at
	\begin{align*}
	\big|\partial^{j+1}_u\Rop(u,v)\big|
	&\lesssim \sum_{\ell=0}^{j+1} \Big(\frac{\sqrt{1-r}}{u}\Big)^{j+1-\ell} (1-r)^{-(j+2)/2}\exp\Big(-\frac{1}{2}\frac{(v-u)^2}{1-r}\Big)	\Big(1+\frac{\sqrt{1-r}}{u}\Big)^\ell\\
	&\qquad\times 
 \min\Big(\frac{uv}{1-r},1\Big)^{\al+1/2}\\
	&\lesssim (1-r)^{-(j+2)/2} \sum_{\ell=0}^{j+1}\Big(\frac{\sqrt{1-r}}{u}\Big)^{j+1-\ell} \Big(1+\frac{\sqrt{1-r}}{u}\Big)^\ell \exp\Big(-\frac{1}{2}\frac{(v-u)^2}{1-r}\Big)\\
	&\lesssim (1-r)^{-(j+2)/2} \exp\Big(-\frac{1}{2}\frac{(v-u)^2}{1-r}\Big),
	\end{align*}
	since $\sqrt{1-r}\lesssim u$. Then we estimate like in \eqref{eq:5}. This finishes the proof of the lemma.
\end{proof}


Notice that Lemmas \ref{lm:Laguerre_Hermite_R_deriv_large_x} and \ref{lm:Laguerre_Hermite_fun_diff} yield for $j\in\NN$ and $\al\in(j-1/2,j+1/2)$ the estimate
\begin{align}\label{eq:14} 
\big\Vert \partial^j_u \Rop(u,\cdot)-\partial_u^j\Rop(u',\cdot)\big\Vert_{L^2(\RR_+)}\lesssim (1-r)^{-\frac{1+2j}{4}}|u-u'| +(1-r)^{\frac{\al+1}{2}} |u-u'|^{\frac{2\al+1-2j}{2}},
\end{align}
uniformly in $r\in(0,1)$ and $u,u'>0$ such that $|u-u'|\leq 1/2$. Indeed, if $u,u'\in(0,1)$ then we use Lemma \ref{lm:Laguerre_Hermite_fun_diff} and Parseval's identity. In the opposite case, $u,v\geq 1/2$, invoke the mean value theorem and Lemma \ref{lm:Laguerre_Hermite_R_deriv_large_x}.

\begin{prop}\label{prop:Laguerre_Hermite_cond_C}
	If $k\in\NN$ and $\al\in\big(\{-1/2,1/2,\ldots,k-1/2\}\cup (k-1/2,\infty)\big)^d$, then
	\begin{align*}
	\Big\Vert &\Rop(x,\cdot)-\sum_{\ven\leq k} \frac{\partial^{n}_{x} \Rop(x',\cdot)}{n_1! \cdot\ldots\cdot n_d!}\prod_{i=1}^d (x_i-x_i')^{n_i}\Big\Vert_{L^2(\RR^d_+)}\lesssim \sum_{\delta\in\Delta^\al_k}(1-r)^{-\frac{d+2k+2\delta}{4}}|x-x'|^{k+\delta},
	\end{align*}		
	uniformly in $r\in(0,1)$ and $x,x'\in\RR^d_+$ such that $|x-x'|\leq 1/2$, where
	\begin{equation}\label{eq:7} 
	\Delta^\al_k=\{1\}\cup\{\al_i+1/2-k:\ \al_i\in(k-1/2,k+1/2) \}.
	\end{equation}
\end{prop}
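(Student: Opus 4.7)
The plan is to mimic the argument used in Proposition \ref{prop:standard_Laguerre_cond_C}, exploiting the tensor-product structure of the kernel $\Rop(x,y)=\prod_{i=1}^d \Ropi(x_i,y_i)$ and replacing the ingredients for standard Laguerre by their Hermite-type counterparts, namely Proposition \ref{prop:Laguerre_Hermite_R_deriv} and estimate \eqref{eq:14}. Because the kernel factorises, $L^2(\RR^d_+)$ norms of partial derivatives of $\Rop$ also factorise into products of $L^2(\RR_+)$ norms of partial derivatives of the one-dimensional kernels $\Ropi$, which is what will let us plug in the one-dimensional estimates component-wise.

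First I would split into cases according to the multi-index $\al$. Suppose first that no coordinate is critical, i.e.\ $\al_i\notin(k-1/2,k+1/2)$ for every $i=1,\ldots,d$ (so each $\al_i$ is either a half-integer not exceeding $k-1/2$ or satisfies $\al_i\geq k+1/2$). Then I would apply Taylor's theorem of order $k+1$ with integral remainder to the function $x\mapsto \Rop(x,\cdot)$, writing the remainder as a sum over $\ven=k+1$ of terms $\prod_j(x_j-x_j')^{n_j}$ times an average of $\partial^n_x\Rop(\xi,\cdot)$. Since $\Rop=\prod_i\Ropi$, the partial derivative $\partial^n_x\Rop(\xi,y)$ factors as $\prod_i \partial_{x_i}^{n_i}\Ropi(\xi_i,y_i)$, so its $L^2(\RR^d_+)$ norm equals $\prod_i\|\partial^{n_i}_u \Ropi(\xi_i,\cdot)\|_{L^2(\RR_+)}$. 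Each factor is controlled by Proposition \ref{prop:Laguerre_Hermite_R_deriv} with $j=n_i$ (the hypothesis on $\al_i$ precisely guarantees that the proposition applies for all $n_i\leq k+1$), producing a total bound of $(1-r)^{-(d+2(k+1))/4}$. Combined with $\prod_i|x_i-x_i'|^{n_i}\lesssim |x-x'|^{k+1}$ this yields the term corresponding to $\delta=1\in\Delta^\al_k$.

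In the remaining case at least one coordinate, say $\al_{i_0}$, lies in the critical open interval $(k-1/2,k+1/2)$. Then I would use Taylor's theorem only to order $k$, so that the remainder is
\[
\sum_{\ven=k}\frac{k!}{n_1!\cdots n_d!}\prod_{i=1}^d\bigl(\partial_{x_i}^{n_i}\Ropi(\xi_i,y_i)-\partial_{x_i}^{n_i}\Ropi(x_i',y_i)\bigr)^{\!?}(x_i-x_i')^{n_i},
\]
or rather its proper ``telescoping'' version (exactly as in Proposition \ref{prop:standard_Laguerre_cond_C}), namely a single difference $\partial^n_x\Rop(\xi,y)-\partial^n_x\Rop(x',y)$ expanded as a telescoping sum of $d$ terms, where in the $i$-th term only the $i$-th factor carries the difference $\partial^{n_i}_u\Ropi(\xi_i,\cdot)-\partial^{n_i}_u\Ropi(x_i',\cdot)$. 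For each such $i$ I would estimate the difference factor in two ways depending on whether $\al_i$ is critical: if $\al_i\in(n_i-1/2,n_i+1/2)$ (which forces $n_i=k$ and $\al_i\in(k-1/2,k+1/2)$), apply \eqref{eq:14} to get a bound of the form $(1-r)^{-(1+2n_i)/4}|x_i-x_i'|+(1-r)^{(\al_i+1)/2}|x_i-x_i'|^{\al_i+1/2-n_i}$; otherwise apply the mean value theorem together with Proposition \ref{prop:Laguerre_Hermite_R_deriv} with $j=n_i+1$, obtaining $(1-r)^{-(3+2n_i)/4}|x_i-x_i'|$. The remaining $d-1$ factors are bounded directly via Proposition \ref{prop:Laguerre_Hermite_R_deriv}. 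Multiplying everything out and using the constraint $|x-x'|\le 1/2$ yields exactly the sum over $\delta\in\Delta^\al_k$ with the claimed powers of $(1-r)$ and $|x-x'|$.

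The main obstacle is purely bookkeeping: one has to verify that each of the $d$ telescoping terms produces an admissible exponent, i.e.\ the power of $(1-r)$ combined with the power of $|x-x'|$ matches either $\delta=1$ or $\delta=\al_{i_0}+1/2-k$ in the statement, for every admissible multi-index $n$ with $\ven=k$. This follows because any extra factor $\partial^{n_j}_u\Ropi$ with $n_j<k$ is estimated by $(1-r)^{-(1+2n_j)/4}$, so the product of all $L^2$ norms contributes $(1-r)^{-d/4}$ times $(1-r)^{-k/2}$ times the ``excess'' $(1-r)^{-\delta/2}$ coming from the single differentiated factor; translating to the exponent $-(d+2k+2\delta)/4$ in the claim is then routine. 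Once this accounting is verified, collecting the two cases finishes the proof.
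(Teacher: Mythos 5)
Your proposal is correct and follows essentially the same route as the paper's own (sketched) proof: Taylor's theorem to order $k+1$ combined with Proposition \ref{prop:Laguerre_Hermite_R_deriv} when no coordinate is critical, and otherwise the order-$k$ remainder with the single critical difference handled by \eqref{eq:14} and the non-critical factors by the mean value theorem plus Proposition \ref{prop:Laguerre_Hermite_R_deriv}, exactly mirroring Proposition \ref{prop:standard_Laguerre_cond_C}. Your telescoped form of the remainder and the exponent bookkeeping are the details the paper omits, and they check out.
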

\begin{proof}
	The proof is analogous to the one of Proposition \ref{prop:standard_Laguerre_cond_C}, thus we will only sketch it. 
	
	Observe that if $\al_i\notin(k-1/2,k+1/2)$ for all $i=1,\ldots,d$, then the claim, with $\Delta^\al_k=\{1\}$, follows from Taylor's theorem and Proposition \ref{prop:Laguerre_Hermite_R_deriv} applied for $j=k+1$ . On the other hand, if $\al_i\in(k-1/2,k+1/2)$ for some $i$ then we apply Taylor's theorem, Proposition \ref{prop:Laguerre_Hermite_R_deriv}, and \eqref{eq:14}. Then the set $\Delta^\al_k$ is as in \eqref{eq:7}. We omit the details.	
\end{proof}

Now we are ready to state Hardy's inequality associated with the system of Laguerre functions of Hermite type.

\begin{thm}\label{thm:main_Laguerre_Hermite}
	Let $p\in(0,1)$, $s\in[p,2]$, and denote $P:=\lfloor d(\frac{1}{p}-1)\rfloor$. For
	\begin{equation*}
	\al\in\big(\{-1/2,1/2,\ldots,P-1/2\}\cup (d(p^{-1}-1)-1/2,\infty)\big)^d,
	\end{equation*}
	there holds
	\begin{equation*}
	\sum_{n\in\NN^d}\frac{|\langle f,\hfun\rangle|^s}{(\ven+1)^E}\lesssim \Vert f\Vert^s_{H^p(\RR^d_+)}, \qquad f\in H^p(\RR^d_+),
	\end{equation*}
	where $E= d+\frac{ds}{4p}(2-3p)$, and the exponent is sharp.
\end{thm}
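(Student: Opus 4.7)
The proof reduces to a verification that all hypotheses of Theorem~\ref{thm:Hp_general} are satisfied, together with checks of the assumptions in Propositions~\ref{prop:general_sharpness} and \ref{prop:general_sharpness_2}; most of the hard work has already been carried out in the preceding lemmas of the section, so the task is essentially assembly.

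\medskip

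First I would verify the positive part. The membership $\hfun\in\Lambda_{d(\frac{1}{p}-1)}(\RR_+^d)$ under the stated constraint on $\al$ is exactly the conclusion of Lemma~\ref{lm:Laguerre_Hermite_Lipschitz} combined with Lemma~\ref{lm:tensor_in_Lambda_nu} (noting that $(\hfunk)'$ is bounded when $\al\in\{-1/2\}\cup[1/2,\infty)$, so the extra assumption at $\nu=1$ in Lemma~\ref{lm:tensor_in_Lambda_nu} is automatic). Proposition~\ref{prop:Laguerre_Hermite_cond_C} then furnishes condition~\eqref{cond:C} with $\gamma=1/4$. Plugging $\gamma=1/4$ into the formula~\eqref{Exponent_formula} gives
\begin{equation*}
E=\frac{(2-p)sd}{4p}+\frac{(2-s)d}{2}=d+\frac{ds}{4p}(2-3p),
\end{equation*}
which is exactly the claimed exponent. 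Theorem~\ref{thm:Hp_general} then yields the desired inequality. As in Theorem~\ref{thm:main_Laguerre_standard}, the borderline case $\al_i+1/2=d(p^{-1}-1)$ (for some $i$) is excluded because condition~\eqref{cond:C} then produces a $\delta$ equal to $d(p^{-1}-1)-P$, which falls outside the admissible range unless $d(p^{-1}-1)$ is an integer; by Remark~\ref{rem:sharpness} Hardy's inequality actually fails in such a situation.

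\medskip

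For sharpness I would appeal to Lemma~\ref{lm:Laguerre_Hermite_sharp_estimates}. With $j=\ell=0$ it gives
\begin{equation*}
\hfunk(u)\simeq (k+1)^{\al/2}u^{\al+1/2},\qquad u\in\bigl(0,c(k+1)^{-1/2}\bigr),
\end{equation*}
so $\hfunk$ does not change sign there and condition~\eqref{eq:17} is satisfied with $\gamma=1/4$ and $\tau=\al/2$ (the prescribed power $(1+2\tau-2\gamma)/(4\gamma)$ reduces to $\al+1/2$, and the interval $(0,cK^{-2\gamma})=(0,cK^{-1/2})$ is of the right form). The restriction $\tau>(4\gamma-2p\gamma-p)/(2p)$ becomes $\al/2>\bigl(d(p^{-1}-1)-1/2\bigr)/d$ in the one-dimensional case, i.e.\ $\al>d(p^{-1}-1)-1/2$, so for $\al$ in the non-integer branch Proposition~\ref{prop:general_sharpness} applies directly.

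\medskip

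The remaining case is when some $\al_i\in\{-1/2,1/2,\ldots,P-1/2\}$ satisfies $\al_i\le d(p^{-1}-1)-1/2$; here Proposition~\ref{prop:general_sharpness} is no longer directly available. In this situation I would use Proposition~\ref{prop:general_sharpness_2} instead, checking~\eqref{eq:24} by applying Lemma~\ref{lm:Laguerre_Hermite_sharp_estimates} with $\ell=\al+1/2$ and $j=P+1$; the lemma then gives
\begin{equation*}
\frac{d^{P+1}}{du^{P+1}}\hfunk(u)\simeq (k+1)^{\al/2+\lceil\frac{P+1-\al-1/2}{2}\rceil}u^{\frac{1-(-1)^{P-\al+1/2}}{2}},
\end{equation*}
with a definite sign, and the exponents of $u$ and $(k+1)$ are exactly those required by~\eqref{eq:24}. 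Finally, the passage from $d=1$ to general $d$ is handled verbatim by Remark~\ref{rem:sharpness_multidim}, using tensor product atoms $A_K(x)=\prod_i a_K(x_i)$.

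\medskip

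The main obstacle is the bookkeeping in the sharpness part, specifically deciding which of Proposition~\ref{prop:general_sharpness} or Proposition~\ref{prop:general_sharpness_2} to use for each admissible $\al$; the rest is essentially a substitution of $\gamma=1/4$ into the general machinery and a check that the various exponents match. The Lipschitz regularity and the kernel derivative estimates, which would otherwise be the deep parts, have already been extracted as separate lemmas earlier in the section.
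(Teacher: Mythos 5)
Your proof is correct and takes essentially the same route as the paper, which likewise assembles Theorem \ref{thm:Hp_general} with Proposition \ref{prop:Laguerre_Hermite_cond_C} (so $\gamma=1/4$) for the inequality, and Propositions \ref{prop:general_sharpness}, \ref{prop:general_sharpness_2} together with Lemma \ref{lm:Laguerre_Hermite_sharp_estimates} (with $j=\ell=0$, resp.\ $\ell=\al+1/2$, $j=P+1$) for sharpness, excluding the borderline case via Remark \ref{rem:sharpness}. The only blemish is the intermediate display ``$\al/2>(d(p^{-1}-1)-1/2)/d$'', which is inconsistent with your (correct) conclusion: with $\gamma=1/4$ the threshold is $\tau=\al/2>\frac{2-3p}{4p}$, i.e.\ $\al>p^{-1}-\frac{3}{2}=d(p^{-1}-1)-\frac12$ for $d=1$.
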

\begin{proof}
	Similarly as in Theorem \ref{thm:main_Laguerre_standard}: the inequality follows from Theorem \ref{thm:Hp_general} and Proposition \ref{prop:Laguerre_Hermite_cond_C}, whereas sharpness is a consequence of Propositions \ref{prop:general_sharpness}, \ref{prop:general_sharpness_2} and Lemma \ref{lm:Laguerre_Hermite_sharp_estimates}. The case $\al_i+1/2=d(p^{-1}-1)$ is excluded due to Remark \ref{rem:sharpness}, unless it is an integer.
\end{proof}

\subsection{Heat kernel estimates}\label{subS:heat_kern_estim}

In this article we estimated or will estimate the kernels $R_r(x,y)$ in various contexts. In case of the standard Laguerre functions it was very easy and for the Jacobi expansions we will use the result known in the literature. On the other hand, here the situation was more involved. In Proposition \ref{prop:Laguerre_Hermite_R_deriv} we have obtained a result which can be interesting on its own, especially in the context of the associated heat kernel.

Recall that the heat semigroup $\{T^\al_t\}_{t\geq 0}$ is spectrally defined by
\begin{equation*}
T^\al_t f =\sum_{n\in\NN^d} e^{-t(4\ven+2|\al|+2d)} \langle f,\hfun\rangle \hfun,\qquad f\in L^2(\RR^d_+).
\end{equation*}

It is known (cf. \cite[p.~403]{Nowak&Stempak_2007_JFA}) that $T_t$ are integral operators:
\begin{equation*}
T_t^\al f(x) =\int_{\RR^d_+} G^\al_t(x,y)f(y)\,dy,\qquad f\in L^2(\RR^d_+),\ x\in\RR^d_+,
\end{equation*}
where
\begin{equation*}
G_t^\al (x,y) =\sum_{n\in\NN^d} e^{-t(4\ven+2|\al|+2d)} \hfun(x) \hfun(y),
\end{equation*}
and explicitly (cf. \cite[(4.17.6)]{Lebedev1972})
\begin{equation*}
G_t^\al (x,y)=(\sinh 2t)^{-d} \exp \Big(-\frac{1}{2}\coth (2t) (|x|^2+|y|^2)\Big) \prod_{i=1}^d \sqrt{x_i y_i} I_{\al_i} \Big(\frac{x_i y_i}{\sinh 2t}\Big).
\end{equation*}

Observe that by the definition of $G_t^\al$ and \eqref{eq:12} we have the following relation
\begin{equation*}
G_t^\al (x,y)=e^{-2t(|\al|+d)} R^\al_{e^{-4t}}(x,y).
\end{equation*}
Hence, the results obtained for $\Rop(x,y)$ can be easily transferred to $G^\al_t(x,y)$. Therefore, by \eqref{eq:5} we have the following one-dimensional estimate. By an obvious modification, this lemma can be generalized to $d\geq 1$.

\begin{lm}
	If $j\in\NN$ and $\al\in\{-1/2,1/2,\ldots,j-1/2\}\cup(j-1/2,\infty)$, then
	\begin{equation*}
	\big\vert \partial^j_u G_t^\al(u,v)\big\vert\lesssim
	\left\{ \begin{array}{cc}
	t^{-\frac{j+1}{2}} \exp \Big(-c\frac{(u-v)^2}{t} \Big) ,& t\leq 1,\\
	e^{-2t(\al+1)} e^{-c(u-v)^2},& t\geq 1,
	\end{array}\right.
	\end{equation*}
	uniformly in $u,v,t>0$ and for some positive constant $c$. Moreover,
	\begin{equation*}
	\sup_{u>0} \big\Vert \partial^j_u G_t^\al(u,\cdot)\big\Vert_{L^2(\RR_+)}\lesssim
	\left\{ \begin{array}{cc}
	t^{-\frac{2j+1}{4}},& t\leq 1,\\
	e^{-2t(\al+1)},& t\geq 1.
	\end{array}\right.
	\end{equation*}
\end{lm}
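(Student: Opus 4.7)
The plan is to deduce both estimates from the corresponding results for $R_r^{\al}$ via the identity $G_t^\al(u,v) = e^{-2t(\al+1)} \Rop(u,v)$ with $r=e^{-4t}$, which was pointed out just above the lemma.

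For the $L^2$ bound, apply Proposition \ref{prop:Laguerre_Hermite_R_deriv} directly:
\begin{equation*}
\sup_{u>0} \big\Vert \partial^j_u G_t^\al(u,\cdot)\big\Vert_{L^2(\RR_+)} = e^{-2t(\al+1)} \sup_{u>0} \big\Vert \partial^j_u R^{\al}_{e^{-4t}}(u,\cdot)\big\Vert_{L^2(\RR_+)} \lesssim e^{-2t(\al+1)} (1-e^{-4t})^{-(2j+1)/4}.
\end{equation*}
For $t\leq 1$ the factor $e^{-2t(\al+1)}$ is bounded and $1-e^{-4t}\simeq t$, giving $t^{-(2j+1)/4}$; for $t\geq 1$ the quantity $1-e^{-4t}$ is bounded away from zero, leaving only $e^{-2t(\al+1)}$.

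For the pointwise estimate, a careful reading of the proof of Proposition \ref{prop:Laguerre_Hermite_R_deriv} shows that for $r\in[1/2,1)$ one actually obtains the pointwise bound
\begin{equation*}
\big| \partial^j_u \Rop(u,v)\big|\lesssim (1-r)^{-(j+1)/2}\exp\Big(-c\,(u-v)^2/(1-r)\Big),
\end{equation*}
since the case analysis on the size of $u$ relative to $\sqrt{1-r}$ produces exactly this uniform bound before the $L^2$ integration in \eqref{eq:5}. Taking $r=e^{-4t}$ with $t\leq(\ln 2)/4$ yields the first branch of the claim, as $1-e^{-4t}\simeq t$ and $e^{-2t(\al+1)}$ is bounded. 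For the remaining range $t\geq(\ln 2)/4$, equivalently $r\in[0,1/2]$, the explicit formula \eqref{eq:2} together with the integral representation \eqref{eq:36} (and \eqref{eq:4} in the case $\al=-1/2$) expresses $\Rop(u,v)$ as a smooth function carrying the Gaussian factor $\exp\bigl(-\tfrac{1+r}{2(1-r)}(u-v)^2\bigr)$. Differentiating $j$ times in $u$ brings down polynomial factors in $u,v$ and bounded powers of $(1-r)^{-1}$; these are all dominated by the Gaussian decay in $(u-v)^2$, since $(1+r)/(1-r)\geq 1$ uniformly on $[0,1/2]$. We obtain $|\partial^j_u \Rop(u,v)|\lesssim \exp(-c(u-v)^2)$ uniformly for $r\in[0,1/2]$, which after multiplication by $e^{-2t(\al+1)}$ delivers the second branch for $t\geq 1$ and, together with the trivial boundedness of $t^{-(j+1)/2}$ on $[(\ln 2)/4,1]$, completes the first branch on the remaining interval.

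The only nontrivial issue is the bookkeeping in the intermediate regime $r\in[0,1/2]$, where the proposition's proof does not directly apply; however, this is entirely routine once one differentiates the explicit representation via \eqref{eq:36} and absorbs polynomial growth into the Gaussian factor, as is done for $r\in[1/2,1)$ in the original proof.
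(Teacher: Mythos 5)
Your proof is correct and follows the same route the paper takes: it transfers the bounds for $\partial_u^j \Rop$ to $G_t^\al$ via the relation $G_t^\al(u,v)=e^{-2t(\al+1)}R^{\al}_{e^{-4t}}(u,v)$, reading the pointwise Gaussian estimate off the proof of Proposition \ref{prop:Laguerre_Hermite_R_deriv} for $r\in[1/2,1)$ and using the Parseval-based $L^2$ bound for the norm estimate. You are in fact more careful than the paper, which merely asserts that the results ``can be easily transferred'': your supplementary argument for the regime $r=e^{-4t}\le 1/2$ (differentiating the explicit representation through \eqref{eq:36} and \eqref{eq:4} and absorbing the polynomial factors into the Gaussian, which works precisely because the hypothesis on $\al$ kills or controls the negative powers of $u$) correctly fills the one detail the paper leaves implicit.
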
 

\subsection{Generalized Hermite functions}
In this subsection we focus on the generalized Hermite function system. Due to its relation with the Laguerre expansions of Hermite type, we will essentially deduce the desired results from the analogous ones above.   

The {\it generalized Hermite functions} $h_k^\lambda$, $k\in\NN$, of order $\lambda\geq 0$ on $\RR$ are defined via
\begin{equation*}
h_{2k}^\lambda(u)= (-1)^k 2^{-1/2}\varphi_k^{\lambda-1/2}(|u|),\quad h_{2k+1}^\lambda(u)=(-1)^k 2^{-1/2}\mathrm{sgn}(u)\varphi_k^{\lambda+1/2}(|u|),\quad u\in\RR,
\end{equation*}
where for $u=0$ we naturally extend the definition of $\hfunk$ from \eqref{eq:33}. In higher dimensions these functions are defined as tensor products, similarly as in the previous sections. The system $\{\hen\}_{n\in\NN^d}$ forms an orthonormal basis in $L^2(\RR^d)$. We remark that $\{h_n^{(0,\ldots,0)}\}_{n\in\NN^d}$ is the Hermite function basis. 

The generalized Hermite functions $\{h_n^\lambda\}_{n\in\NN^d}$ are bounded (cf. \eqref{eq:9}), and therefore they are in $BMO(\RR^d)$. Moreover, for $\lambda\in \{0,2,\ldots\}\cup[p^{-1}-1,\infty)$ they belong to the Lipschitz spaces $\Lambda_{ \frac{1}{p}-1 }$, see \cite[Proposition~1.2]{LiShi_2014_CA}. Hence, by Lemma \ref{lm:tensor_in_Lambda_nu} in the multi-dimensional situation we see that $h_n^\lambda\in\Lambda_{d (\frac{1}{p}-1)}$ for $\lambda\in \big(\{0,2,\ldots\}\cup [d(p^{-1}-1),\infty)\big)^d$ (note that the additional assumption is satisfied).

The family of kernels $R_r(u,v)$ associated with the generalized Hermite functions, in the case $d=1$, is given by
\begin{equation*}
\htRop(u,v)=\sum_{k\in\NN} r^k h^\lambda_k(u) h_k^\lambda(v). 
\end{equation*}
We use the symbol $\tilde{R}$ instead of $R$ to distinct this kernel from the one associated with the functions $\{\hfunk\}_{k\in\NN}$. Notice that
\begin{align}\label{eq:37}
\htRop(u,v) =\frac{1}{2}\Big( R^{\lambda-1/2}_{r^2}(|u|,|v|) +{\rm sgn}(uv)r R_{r^2}^{\lambda+1/2} (|u|,|v|)\Big),
\end{align}
where $r\in(0,1)$ and $u,v\in\RR$. Here $R$ denotes the kernel corresponding to the Laguerre functions of Hermite type. We naturally extended the definition for $u=0$ and $v=0$.  Observe that if $\lambda$ is an even integer, then $\htRop\in \mathcal{C}^\infty (\RR\times \RR)$. Moreover, given $j\in\NN$ we see that $\htRop\in \mathcal{C}^j (\RR\times \RR)$ for $\lambda> j$.

Fix $j\in\NN$ and $\lambda\in \{0,2,\ldots\}\cup(j,\infty)$. For $u\neq0$ we have
\begin{equation*}
\partial^j_u \htRop(u,v)=\frac{({\rm sgn} u)^j}{2}\Big( \partial^j_u R^{\lambda-1/2}_{r^2}(|u|,|v|) +{\rm sgn}(vu)r \partial_u^j  R_{r^2}^{\lambda+1/2} (|u|,|v|)\Big), 
\end{equation*}
whereas for $u=0$ we see that
\begin{equation*}
\partial^j_u \htRop(0,v)=\frac{1}{2}\partial^j_u R^{\lambda-1/2}_{r^2}(0,|v|) , 
\end{equation*}
where in both cases $r\in(0,1)$ and $v\in\RR$. In the latter equality we naturally extended the formula from \eqref{eq:2} to $u=0$.

\begin{lm}
	Let $j\in\NN$. For $\lambda\in\{0,2,\ldots \}\cup (j,\infty)$ we have
	\begin{equation*}
	\sup_{u\in\RR} \big\Vert \partial_u^j \htRop(u,\cdot)\big\Vert_{L^2(\RR)}\lesssim (1-r)^{(1+2j)/4},
	\end{equation*} 
	uniformly in $r\in(0,1)$. Moreover, for $\lambda\in (j,j+1]$ we have
	\begin{equation*}
	\big\Vert \partial_u^j \htRop(u,\cdot)-\partial_u^j \htRop(u',\cdot)\big\Vert_{L^2(\RR)}\lesssim (1-r)^{(1+2j)/4} |u-u'|+ (1-r)^{(2\lambda+1)/4}|u-u'|^{\lambda-j}
	\end{equation*}
	uniformly in $r\in(0,1)$ and $u,u'\in\RR$ such that $|u-u'|\leq 1$.
\end{lm}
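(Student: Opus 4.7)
The strategy is to reduce both estimates to the one-dimensional results for the Laguerre-of-Hermite-type kernels $\Rop$ via the decomposition \eqref{eq:37}. That identity expresses $\htRop(u,v)$ as a sign-weighted combination of $R^{\lambda-1/2}_{r^2}(|u|,|v|)$ and $R^{\lambda+1/2}_{r^2}(|u|,|v|)$, and the explicit formulas for $\partial_u^j \htRop(u,v)$ displayed just after \eqref{eq:37} show that the same combinatorial structure is preserved under $j$ derivatives in $u$ (with only the first term appearing when $u=0$).

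For the first estimate, I would split $\int_\RR |\partial_u^j \htRop(u,v)|^2\,dv$ into integrals over $v>0$ and $v<0$, perform the substitution $v\mapsto -v$ on the negative piece, and apply Minkowski's inequality. This yields
\begin{equation*}
\big\Vert \partial_u^j \htRop(u,\cdot)\big\Vert_{L^2(\RR)} \lesssim \big\Vert \partial_u^j R^{\lambda-1/2}_{r^2}(|u|,\cdot)\big\Vert_{L^2(\RR_+)} + \big\Vert \partial_u^j R^{\lambda+1/2}_{r^2}(|u|,\cdot)\big\Vert_{L^2(\RR_+)}.
\end{equation*}
The hypothesis $\lambda\in\{0,2,\ldots\}\cup(j,\infty)$ is tailored precisely so that both $\al=\lambda-1/2$ and $\al=\lambda+1/2$ fall into the admissible set $\{-1/2,1/2,\ldots,j-1/2\}\cup(j-1/2,\infty)$ required by Proposition \ref{prop:Laguerre_Hermite_R_deriv}: when $\lambda$ is an even non-negative integer both shifted values are half-integers of the allowed form, and when $\lambda>j$ both automatically exceed $j-1/2$. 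Invoking the proposition at parameter $r^2$ and using $(1-r^2)\simeq(1-r)$ on $(0,1)$ then gives the claimed bound; the case $u=0$ is even easier as only the first term contributes.

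For the H\"older estimate with $\lambda\in(j,j+1]$, the same decomposition reduces the task to controlling the differences $\partial_u^j R^{\lambda\mp 1/2}_{r^2}(|u|,\cdot) - \partial_u^j R^{\lambda\mp 1/2}_{r^2}(|u'|,\cdot)$ in $L^2(\RR_+)$ whenever $u,u'$ share a sign. For $\al=\lambda-1/2\in(j-1/2,j+1/2]$ in its open subrange, estimate \eqref{eq:14} is directly applicable and supplies both a linear-in-$|u-u'|$ term and a H\"older term of order $\lambda-j$; at the endpoint $\lambda=j+1$ only the linear term is required, and it follows from the mean value theorem together with Proposition \ref{prop:Laguerre_Hermite_R_deriv} applied at derivative order $j+1$ (permissible since $\lambda-1/2=(j+1)-1/2$ lies in the admissible half-integer set for that order). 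For $\al=\lambda+1/2>j+1/2$ the mean value theorem and Proposition \ref{prop:Laguerre_Hermite_R_deriv} at order $j+1$ similarly produce only a linear contribution with a power of $(1-r)$ absorbed by the first term of the target bound. The main obstacle is the mixed-sign case $uu'<0$, where the sign factors in the derivative formula differ between the two arguments and direct subtraction breaks the decomposition. I would resolve it by bridging through the origin: since $\lambda>j$ ensures $\htRop\in\mathcal{C}^j(\RR\times\RR)$, as noted in the discussion preceding the lemma, the identity
\begin{equation*}
\partial_u^j \htRop(u,\cdot) - \partial_u^j \htRop(u',\cdot) = \bigl(\partial_u^j \htRop(u,\cdot) - \partial_u^j \htRop(0,\cdot)\bigr) + \bigl(\partial_u^j \htRop(0,\cdot) - \partial_u^j \htRop(u',\cdot)\bigr)
\end{equation*}
splits the mixed-sign difference into two same-sign differences, each with argument separation bounded by $|u-u'|$, to which the analysis above applies.
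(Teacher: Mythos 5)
Your proposal takes exactly the route of the paper's proof, which consists of the two sentences ``The first part follows from \eqref{eq:37} and Proposition \ref{prop:Laguerre_Hermite_R_deriv}. For the second see \eqref{eq:14}.'' You supply the details the paper omits: splitting the $v$-integral over $v>0$ and $v<0$, checking that both $\lambda-1/2$ and $\lambda+1/2$ land in the parameter set admissible for Proposition \ref{prop:Laguerre_Hermite_R_deriv}, using $(1-r^2)\simeq(1-r)$, and bridging mixed-sign pairs $uu'<0$ through the origin. The first estimate and the $R^{\lambda-1/2}_{r^2}$ component of the second are handled correctly.

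One step fails as written, namely the treatment of the $R^{\lambda+1/2}_{r^2}$ component of the H\"older estimate. For $\lambda\in(j,j+1)$ the mean value theorem combined with Proposition \ref{prop:Laguerre_Hermite_R_deriv} at order $j+1$ (with $\al=\lambda+1/2>j+1/2$) produces the bound $(1-r)^{-(2j+3)/4}|u-u'|$, and this is \emph{not} absorbed by the first term $(1-r)^{-(2j+1)/4}|u-u'|$ of the target: $(1-r)^{-(2j+3)/4}\geq(1-r)^{-(2j+1)/4}$, so the exponent moves in the wrong direction as $r\to 1$. The conclusion is still true, but it requires an interpolation rather than direct absorption. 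If $|u-u'|\leq(1-r)^{1/2}$, then since $0\le 1-(\lambda-j)$ one has
\begin{equation*}
(1-r)^{-\frac{2j+3}{4}}|u-u'|=(1-r)^{-\frac{2j+3}{4}}\,|u-u'|^{1-(\lambda-j)}\,|u-u'|^{\lambda-j}\leq(1-r)^{-\frac{2\lambda+1}{4}}|u-u'|^{\lambda-j},
\end{equation*}
so the mean value bound is dominated by the second (H\"older) term of the target; if $|u-u'|\geq(1-r)^{1/2}$, one abandons the mean value theorem and uses the crude bound $2\sup_{u}\Vert\partial_u^j R^{\lambda+1/2}_{r^2}(u,\cdot)\Vert_{L^2(\RR_+)}\lesssim(1-r)^{-(2j+1)/4}\leq(1-r)^{-\frac{2\lambda+1}{4}}|u-u'|^{\lambda-j}$. (The same interpolation is implicitly needed in the paper's own derivation of \eqref{eq:14} for $u,u'\geq 1/2$, so the oversight mirrors the source; still, the clause ``absorbed by the first term'' should be replaced by this two-case argument.) A last cosmetic remark: the exponents of $(1-r)$ in the statement are printed with a positive sign while Proposition \ref{prop:Laguerre_Hermite_R_deriv} yields negative ones; this is a sign typo in the lemma, and your proof correctly targets the negative-exponent version.
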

\begin{proof}
	The first part follows from \eqref{eq:37} and Proposition \ref{prop:Laguerre_Hermite_R_deriv}. For the second see \eqref{eq:14}.
\end{proof}

Now the version of \eqref{cond:C} corresponding to the generalized Hermite setting follows easily. Then we immediately obtain associated Hardy's inequality.

\begin{prop}\label{prop:generalized_Hermite_cond_C}
	If $k\in\NN$ and $\lambda\in\big(\{0,2,\ldots\}\cup (k,\infty)\big)^d$, then
	\begin{align*}
	\Big\Vert &\htRop(x,\cdot)-\sum_{\ven\leq k} \frac{\partial^{n}_{x} \htRop(x',\cdot)}{n_1! \cdot\ldots\cdot n_d!}\prod_{i=1}^d (x_i-x_i')^{n_i}\Big\Vert_{L^2(\RR^d)}\lesssim \sum_{\delta\in\Delta^\al_k}(1-r)^{-\frac{d+2k+2\delta}{4}}|x-x'|^{k+\delta},
	\end{align*}		
	uniformly in $r\in(0,1)$ and $x,x'\in\RR^d$ such that $|x-x'|\leq 1/2$, where
	\begin{equation*}
	\Delta^\lambda_k=\{1\}\cup\{\lambda_i-k:\ \lambda_i\in(k,k+1) \}.
	\end{equation*}
\end{prop}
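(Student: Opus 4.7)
The plan is to mirror the proofs of Propositions \ref{prop:standard_Laguerre_cond_C} and \ref{prop:Laguerre_Hermite_cond_C} almost verbatim, exploiting the tensor--product structure of the multidimensional kernel and invoking the preceding one--dimensional lemma as the main technical input.

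First I would note that since $\{\hen\}_{n\in\NN^d}$ is the tensor product of the one--dimensional systems, the $d$--dimensional kernel factors as $\htRop(x,y)=\prod_{i=1}^d \tilde R_r^{\lambda_i}(x_i,y_i)$. Consequently, by Parseval in each coordinate, $L^2(\RR^d)$ norms of partial derivatives reduce to products of one--dimensional $L^2(\RR)$ norms, and the Taylor expansion separates cleanly along the coordinate directions. I would then split the argument by whether some $\lambda_i$ lies in the ``small--fractional'' interval $(k,k+1)$.

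\textbf{Case 1:} $\lambda_i\notin(k,k+1)$ for every $i$. Then, by the hypothesis on $\lambda$, each $\lambda_i$ belongs to $\{0,2,\ldots\}\cup(k+1,\infty)$, so the first bound of the preceding lemma applies for $j\le k+1$ in every coordinate. I would invoke Taylor's theorem with integral remainder of order $k+1$, writing
\begin{equation*}
\htRop(x,\cdot)-\sum_{\ven\le k}\frac{\partial^n_x \htRop(x',\cdot)}{n_1!\cdots n_d!}\prod_{j=1}^d(x_j-x'_j)^{n_j}
\end{equation*}
as a sum over $\ven=k+1$ of integrals of $\partial^n_x \htRop(\xi,\cdot)$ against a weight whose $L^1$--mass is $\lesssim |x-x'|^{k+1}$. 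Factoring $\partial^n_x \htRop$ across coordinates and applying the preceding lemma coordinatewise (with $j=n_i\le k+1$) yields the bound $(1-r)^{-(d+2(k+1))/4}|x-x'|^{k+1}$, which is the $\delta=1$ contribution.

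\textbf{Case 2:} $\lambda_i\in(k,k+1)$ for some index $i$. Here I would Taylor--expand only up to order $k$, producing a remainder term of the shape
\begin{equation*}
\sum_{\ven=k}\frac{k!}{n_1!\cdots n_d!}\Big(\partial^n_x \htRop(\xi,\cdot)-\partial^n_x \htRop(x',\cdot)\Big)\prod_{j=1}^d(x_j-x'_j)^{n_j},
\end{equation*}
with $\xi$ on the segment between $x$ and $x'$. Using the tensor factorization, each difference $\partial^n_x \htRop(\xi,\cdot)-\partial^n_x \htRop(x',\cdot)$ telescopes across coordinates into a sum of terms differing in only one variable at a time. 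For coordinates where $\lambda_i\in(n_i,n_i+1)$ (only possible when $n_i=k$ by our constraint), I would apply the Hölder--type second bound of the preceding lemma to gain $(1-r)^{-(2\lambda_i+1)/4}|x_i-x'_i|^{\lambda_i-k}$, producing the $\delta=\lambda_i-k$ contribution in $\Delta^\lambda_k$. For the remaining coordinates, the mean value theorem combined with the first bound of the lemma (applied with $j=n_i+1$, permissible since $\lambda_i\notin(n_i,n_i+1)$) contributes the $\delta=1$ term. The factors in the orthogonal coordinates are controlled by $\sup_u\|\partial^{n_i}_u \tilde R_r^{\lambda_i}(u,\cdot)\|_{L^2}$, again furnished by the preceding lemma.

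The only real obstacle is the telescoping bookkeeping: ensuring that, when one factor in the product is replaced by a difference, the remaining factors (evaluated at intermediate points) admit the uniform $L^2$ bounds demanded by the preceding lemma, with powers of $(1-r)$ that add up to the exponent $(d+2k+2\delta)/4$ claimed in the statement. This is purely routine once the cases are separated, and the restriction $|x-x'|\le 1/2$ is exactly what allows the Hölder part of the lemma to be invoked.
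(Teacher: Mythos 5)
Your proposal is correct and follows essentially the same route as the paper, which disposes of this proposition by pointing to the two-case Taylor argument of Propositions \ref{prop:standard_Laguerre_cond_C} and \ref{prop:Laguerre_Hermite_cond_C} together with the sup and H\"older bounds of the preceding lemma. The only point to tidy is the boundary value $\lambda_i=k+1$ with $k+1$ odd, which your Case~1 dichotomy misses but which is covered by the lemma's difference estimate on $(j,j+1]$ with $j=k$, yielding exactly the $\delta=1$ contribution.
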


\begin{thm}\label{thm:main_generalized_Hermite}
	Let $p\in(0,1)$, $s\in[p,2]$, $d\in\NN$, and $P=\lfloor d(p^{-1}-1)\rfloor$. For $\lambda\in\big(\{0,2,\ldots\}\cup (d(p^{-1}-1),\infty)\big)^d$, there holds
	\begin{equation*}
	\sum_{n\in\NN^d}\frac{|\langle f,\hen\rangle|^s}{(\ven+1)^E}\lesssim \Vert f\Vert^s_{H^p(\RR^d)}, \qquad f\in H^p(\RR^d),
	\end{equation*}
	where $E= d+\frac{ds}{4p}(2-3p)$, and the exponent is sharp.
\end{thm}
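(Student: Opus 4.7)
The plan is to derive both parts by reduction: the upper bound to Theorem \ref{thm:Hp_general}, and sharpness to the machinery of Propositions \ref{prop:general_sharpness}, \ref{prop:general_sharpness_2}, and Remark \ref{rem:sharpness_multidim}, restricted to the even-indexed subsystem of the basis.

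For the inequality, I would invoke Theorem \ref{thm:Hp_general} with $X=\RR^d$ and $\gamma=1/4$. The required condition \eqref{cond:C} is precisely Proposition \ref{prop:generalized_Hermite_cond_C}, and the membership $h_n^\lambda\in\Lambda_{d(p^{-1}-1)}(\RR^d)$ was established in the paragraph preceding that proposition (combining \cite[Proposition~1.2]{LiShi_2014_CA} with Lemma \ref{lm:tensor_in_Lambda_nu}). Substituting $\gamma=1/4$ into \eqref{Exponent_formula} and simplifying yields $E=\frac{(2-p)sd}{4p}+\frac{(2-s)d}{2}=d+\frac{ds}{4p}(2-3p)$, matching the stated exponent.

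For sharpness, I would work with the subsystem $\{h_{2n}^\lambda\}_{n\in\NN^d}$, because on $\RR^d_+$ these are, up to signs, tensor products of Laguerre functions of Hermite type, to which the tooling already developed in this section applies. Explicitly, I would take the tensor-product atom $A_K(x)=\prod_{i=1}^d a_K(x_i)$ from \eqref{eq:def_a} with $A=K^{1/2}/c$ (the choice dictated by $\gamma=1/4$), supported in $(0,cK^{-1/2})^d\subset\RR^d$. Using $h_{2n_i}^{\lambda_i}(u)=(-1)^{n_i}2^{-1/2}\varphi_{n_i}^{\lambda_i-1/2}(|u|)$ on the atom's support, the pairing $\langle A_K,h_{2n}^\lambda\rangle$ factors into a product of one-dimensional pairings $\langle a_K,\varphi_{n_i}^{\lambda_i-1/2}\rangle$. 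Lemma \ref{lm:Laguerre_Hermite_sharp_estimates} then supplies exactly the estimates postulated in \eqref{eq:17} (when $\lambda_i>d(p^{-1}-1)$, with $\tau_i=(\lambda_i-1/2)/2$ and $\gamma=1/4$, so that $u^{(1+2\tau_i-2\gamma)/(4\gamma)}=u^{\lambda_i}$ matches the near-zero behaviour of $\varphi_{n_i}^{\lambda_i-1/2}$) and in \eqref{eq:24} (when $\lambda_i$ is a non-negative even integer, by applying the lemma with $\ell=\lambda_i$ and $j=P+1$). Lower bounding $\sum_{m\in\NN^d}|\langle A_K,h_m^\lambda\rangle|^s/(|m|+1)^{E-\ve}$ by the subsum over $m=2n$ and repeating the counting of Remark \ref{rem:sharpness_multidim} then gives a lower bound $\gtrsim K^\ve$ for every $\ve>0$, as required.

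The point requiring most care is the constraint $\tau>(4\gamma-2p\gamma-p)/(2p)=\frac{1}{2p}-\frac{3}{4}$ required by Proposition \ref{prop:general_sharpness}. With $\tau_i=(\lambda_i-1/2)/2$ this amounts to $\lambda_i>p^{-1}-1$, which is automatically implied by the hypothesis $\lambda_i>d(p^{-1}-1)$ for every $d\geq 1$. The borderline case, in which $d(p^{-1}-1)$ is non-integer and some $\lambda_i$ equals it, is excluded from the admissible range; Remark \ref{rem:sharpness} then confirms that Hardy's inequality in fact fails with the stated $E$, in full analogy with Theorems \ref{thm:main_Laguerre_standard} and \ref{thm:main_Laguerre_Hermite}.
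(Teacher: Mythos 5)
Your proposal is correct and follows essentially the same route as the paper: the upper bound is obtained exactly as in the paper (Proposition \ref{prop:generalized_Hermite_cond_C} combined with Theorem \ref{thm:Hp_general} for $\gamma=1/4$, giving $E=d+\frac{ds}{4p}(2-3p)$), and your sharpness argument through the even-indexed subsystem $h_{2n}^\lambda=\pm 2^{-1/2}\varphi_n^{\lambda-1/2}(|\cdot|)$ is precisely the content of the paper's one-line reduction, which deduces sharpness from Theorem \ref{thm:main_Laguerre_Hermite} by noting that lowering the exponent for the generalized Hermite system would lower it for the Laguerre system of Hermite type. The only difference is presentational: you unwind that reduction explicitly (re-running Propositions \ref{prop:general_sharpness}, \ref{prop:general_sharpness_2} and Remark \ref{rem:sharpness_multidim} for the functions $\varphi_{n_i}^{\lambda_i-1/2}$ against the atoms $A_K$), whereas the paper simply cites the already-established sharpness of Theorem \ref{thm:main_Laguerre_Hermite}, whose own proof consists of exactly this construction.
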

\begin{proof}
	The inequality is a consequence of Proposition \ref{prop:generalized_Hermite_cond_C} and Theorem \ref{thm:Hp_general}. On the other hand, sharpness follows immediately from Theorem \ref{thm:main_Laguerre_Hermite}. Indeed, it is clear that if the admissible exponent for the generalized Hermite functions could be lowered, so could be the exponent corresponding to the Laguerre expansions of Hermite type. 
\end{proof}

We remark that for $\lambda=(0,\ldots,0)$, that is in the case of the Hermite functions, the result agrees with the ones already known in the literature (\cite{Radha_Thangavelu_2004_PAMS} for $d\geq 2$, \cite{LiYuShi_2015_JFAA} for $d=1$).

\section{Jacobi trigonometric functions}\label{S:Jacobi_trigonom_fun}
For the type parameters $\al,\be>-1$ the {\it Jacobi functions} $\jfunk$, $k\in\NN$, are defined by
\begin{equation}\label{eq:jfunk_definition}
\jfunk(\te)=\Big(\sin\frac{\te}{2}\Big)^{\al+1/2}\Big(\cos\frac{\te}{2}\Big)^{\be+1/2} \jtpolk(\te),\qquad \te\in(0,\pi),
\end{equation}
where 
\begin{equation*}
\jtpolk(\te)= c_k^{\al,\be} \jpolk(\cos \te),
\end{equation*}
and $\jpolk$ denotes the Jacobi polynomial of type $\al,\be$ and degree $k$.
Here $c_k^{\al,\be}$ is the normalizing constant,
\begin{align*}
c_k^{\al,\be}=\bigg(\frac{(2k+\al+\be+1)\Gamma(k+\al+\be+1)\Gamma(k+1)}{\Gamma(k+\al+1)\Gamma(k+\be+1)} \bigg)^{1/2},
\end{align*}
where for $k=0$ and $\al+\be = -1$ we write $1$ in place of $(2k+\al+\be + 1)\Gamma(k+\al+\be + 1)$ in the numerator. Note that $c_k^{\al,\be}\simeq (k+1)^{1/2}$, $k\in\NN$. The system $\{\jfunk\}_{k\in\NN}$ is an orthonormal basis in $L^2((0,\pi))$. In higher dimensions $\jfun(\te)$ are defined as tensor products of $\jfuni(\te_i)$.

We are now interested in the $L^\infty$ norms of the derivatives of $\jfunk$ in various ranges of the parameters $\al$ and $\be$ and on different subintervals of $(0,\pi)$. Firstly, recall that for $\al,\be\geq -1/2$ there is (see \cite[(2.8)]{Muckenhoupt_1986_MAMS})
\begin{equation}\label{eq:27}
\big|\jfunk(\te) \big|\lesssim \left\{ \begin{array}{ll}
\big((k+1)\te\big)^{\al+1/2},& 0<\te\leq (k+1)^{-1},\\
1, & (k+1)^{-1} \leq \te\leq \pi-(k+1)^{-1},\\
\big((k+1)\te\big)^{\be+1/2},& \pi-(k+1)^{-1}\leq \te< \pi.
\end{array}
\right.
\end{equation}
Hence, for $\al,\be\geq -1/2$
\begin{equation*}
\Vert\jfunk\Vert_{L^\infty(0,\pi)}\lesssim 1,\qquad k\in\NN.
\end{equation*}
Secondly, we make use of the formula (cf. \cite[(4.21.7)]{Szego1959} or \cite[p.~364]{Ciaurri&Nowak&Stempak_2007_MZ} after an obvious simplification)
\begin{align}\label{eq:39}
\begin{split}
\frac{d}{d\te} \jfunk(\te)= - k_{\al,\be} \phi_{k-1}^{\al+1,\be+1}(\te) \Big(\frac{2\al+1}{4}\cot\frac{\te}{2}-\frac{2\be+1}{4}\tan\frac{\te}{2}\Big)\jfunk(\te),
\end{split}
\end{align}
where we put $\phi_{-1}^{\al+1,\be+1}\equiv 0$ and $k_{\al,\be}=\sqrt{k(k+\al+\be+1)} $. Observe that \eqref{eq:39} and \eqref{eq:27} give for $\al\in\{-1/2\}\cup[1/2,\infty)$ and $\be\geq -1/2 $ the bound
\begin{align*}
\big\Vert (\jfunk)'\big\Vert_{L^\infty(0,\frac{2\pi}{3})}\lesssim (k+1),\qquad k\in\NN,
\end{align*}
whereas for $\al\geq -1/2$ and  $\be\in\{-1/2\}\cup[1/2,\infty)$,
\begin{align*}
\big\Vert (\jfunk)'\big\Vert_{L^\infty(\frac{\pi}{3},\pi)}\lesssim (k+1),\qquad k\in\NN.
\end{align*}
For similar estimates for higher order derivatives see Lemma \ref{lm:Jacobi_fun_Linfty_derivatives}.

We will frequently make use of the formula
\begin{equation}\label{eq:30}
\jfunk(\te)=\phi_k^{\be,\al}(\pi-\te),\qquad \te\in(0,\pi).
\end{equation}

\subsection{Lipschitz and $BMO$ properties}

Let us firstly give some auxiliary lemmas and justify that the Jacobi functions belong to the Lipschitz spaces $\Lambda_{\nu}((0,\pi))$ for certain $\nu$.

\begin{lm}\label{lm:Jacobi_fun_estim}
	Let $j\in\NN$ and $\al,\be\geq-1/2$. Then, for any $c\in(0,1]$, we have
	\begin{equation*}
	\big|(\jfunk)^{(j)}(\te) \big|\lesssim\left\{
	\begin{array}{ll}
	\te^{\al+1/2-j} (k+1)^{\al+1/2},& \te\in\big(0,c(k+1)^{-1}\big),\\
	(k+1)^{j},&\te\in\big(c(k+1)^{-1},\frac{2\pi}{3}\big),
	\end{array}\right.
	\end{equation*}
	and
	\begin{equation*}
	\big|(\jfunk)^{(j)}(\te) \big|\lesssim\left\{
	\begin{array}{ll}
	\te^{\be+1/2-j} (k+1)^{\be+1/2},& \te\in\big(\pi-c(k+1)^{-1},\pi\big),\\
	(k+1)^{j},&\te\in\big(\frac{\pi}{3},\pi-c(k+1)^{-1}\big),
	\end{array}\right.
	\end{equation*}
	uniformly in $\te$ and $k\in\NN$.
\end{lm}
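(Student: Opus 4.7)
The plan is to reduce the second pair of estimates to the first via the symmetry relation \eqref{eq:30}: since $\jfunk(\te)=\phi_k^{\be,\al}(\pi-\te)$, differentiating gives $(\jfunk)^{(j)}(\te)=(-1)^j(\phi_k^{\be,\al})^{(j)}(\pi-\te)$, so the bounds near $\pi$ with parameters $(\al,\be)$ reduce to the bounds near $0$ with parameters $(\be,\al)$. Hence only the first pair needs to be justified.

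I would proceed by induction on $j$, the inductive statement being uniform in $\al,\be\geq -1/2$ and in $c\in(0,1]$. The case $j=0$ is exactly \eqref{eq:27}. For the inductive step, interpret the identity \eqref{eq:39} in analogy with \eqref{eq:6} as
\begin{equation*}
(\jfunk)'(\te)=-k_{\al,\be}\phi_{k-1}^{\al+1,\be+1}(\te)+\Big(\tfrac{2\al+1}{4}\cot\tfrac{\te}{2}-\tfrac{2\be+1}{4}\tan\tfrac{\te}{2}\Big)\jfunk(\te),
\end{equation*}
differentiate $j$ more times, and apply Leibniz's rule. This expresses $(\jfunk)^{(j+1)}(\te)$ as a linear combination of $-k_{\al,\be}(\phi_{k-1}^{\al+1,\be+1})^{(j)}(\te)$ and, for $0\leq \ell\leq j$, of the products $\partial_\te^\ell\cot(\te/2)\cdot(\jfunk)^{(j-\ell)}(\te)$ and $\partial_\te^\ell\tan(\te/2)\cdot(\jfunk)^{(j-\ell)}(\te)$. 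Two elementary facts will be used throughout: $|\partial_\te^\ell\cot(\te/2)|\lesssim \te^{-\ell-1}$ near $\te=0$, and all derivatives of $\tan(\te/2)$ are bounded on $[0,2\pi/3]$.

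On $\bigl(0,c(k+1)^{-1}\bigr)$, apply the inductive hypothesis to $\phi_{k-1}^{\al+1,\be+1}$ to obtain $|(\phi_{k-1}^{\al+1,\be+1})^{(j)}(\te)|\lesssim \te^{\al+3/2-j}(k+1)^{\al+3/2}$; combined with $k_{\al,\be}\simeq k+1$ and $\te(k+1)\leq c$, the first term is controlled by $(k+1)^2\te^2\cdot \te^{\al-1/2-j}(k+1)^{\al+1/2}\lesssim \te^{\al+1/2-(j+1)}(k+1)^{\al+1/2}$, the desired bound. The cotangent contributions evaluate to $\te^{-\ell-1}\cdot \te^{\al+1/2-(j-\ell)}(k+1)^{\al+1/2}=\te^{\al-1/2-j}(k+1)^{\al+1/2}$ exactly, while the tangent contributions are smaller by a factor $\te^{\ell+1}\leq c^{\ell+1}(k+1)^{-\ell-1}$. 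On $\bigl(c(k+1)^{-1},2\pi/3\bigr)$, the second half of the inductive hypothesis yields $|(\phi_{k-1}^{\al+1,\be+1})^{(j)}(\te)|\lesssim (k+1)^j$, and one uses $\te^{-\ell-1}\lesssim (k+1)^{\ell+1}$ together with boundedness of $\tan(\te/2)$ to bound every summand by a constant multiple of $(k+1)^{j+1}$, closing the induction.

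The main obstacle is the bookkeeping of exponents in the range close to $0$: the target bound admits essentially no slack, and the argument works only because the type-parameter shift $\al\mapsto\al+1$ forced by the recurrence is balanced precisely by the factor $k_{\al,\be}\simeq k+1$ and by the constraint $\te\leq c/(k+1)$. Once this tight matching is observed, the cotangent summand in fact saturates the estimate, so no finer analysis is required and the induction goes through.
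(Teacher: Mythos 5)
Your proposal is correct and follows essentially the same route as the paper's proof: reduction of the estimates near $\pi$ to those near $0$ via \eqref{eq:30}, induction on $j$ with base case \eqref{eq:27}, differentiation of the recurrence \eqref{eq:39} by Leibniz's rule, and the same elementary bounds $|\partial_\te^\ell\cot(\te/2)|\lesssim\te^{-\ell-1}$ and $|\partial_\te^\ell\tan(\te/2)|\lesssim 1$ on $(0,2\pi/3)$, with identical exponent bookkeeping in both regimes.
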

\begin{proof}
	Notice that by \eqref{eq:30} it suffices to verify the first estimate. We use the induction. For $j=0$ see \eqref{eq:27}. Assume that the claim holds for $j\in\NN$ and consider
	\begin{align*}
	\frac{d^{j+1}}{d \te^{j+1}}\jfunk(\te)&=-k_{\al,\be} \frac{d^j}{d\te^j} \phi_{k-1}^{\al+1,\be+1}(\te)+\frac{d^j}{d\te^j} \Big( \Big( \frac{2\al+1}{4}\cot\frac{\te}{2} -\frac{2\be+1}{4}\tan\frac{\te}{2}\Big) \jfunk(\te)\Big),
	\end{align*}
	where we used \eqref{eq:39}. Observe that for any given $i\in\NN$ there i
	\begin{equation*}
	\Big|\big(\tan(\te/2)\big)^{(i)}\Big|\lesssim 1\qquad \text{and}\qquad \Big|\big(\cot(\te/2)\big)^{(i)}\Big|\lesssim \te^{-(i+1)},
	\end{equation*}
	uniformly in $\te\in\big(0,\frac{2\pi}{3}\big)$. Hence, uniformly in $k\in\NN$ and $\te\in\big(0,\frac{2\pi}{3}\big)$, we have
	\begin{align*}
	\Big| \frac{d^{j+1}}{d \te^{j+1}}\jfunk(\te)\Big|\lesssim (k+1) \Big| \frac{d^{j}}{d \te^{j}}\phi_{k-1}^{\al+1,\be+1}(\te)\Big|+\sum_{i=0}^j \te^{-(i+1)} \Big| \frac{d^{j-i}}{d \te^{j-i}}\jfunk(\te)\Big| +\sum_{i=0}^j \Big| \frac{d^{i}}{d \te^{i}}\jfunk(\te)\Big|.
	\end{align*}
	Thus,
	\begin{align*}
	\Big| \frac{d^{j+1}}{d \te^{j+1}}\jfunk(\te)\Big|\lesssim  (k+1)^{\al+1/2} \te^{\al+1/2-j}\Big( (k+1)^{2} \te + \te^{-1} + 1\Big)\lesssim (k+1)^{\al+1/2} \te^{\al+1/2-j-1},
	\end{align*}
	uniformly in $k\in\NN$ and $\te\in\big(0,c(k+1)^{-1}\big)$. Similarly, 
	\begin{align*}
	\Big| \frac{d^{j+1}}{d \te^{j+1}}\jfunk(\te)\Big|\lesssim  (k+1)^{j+1},
	\end{align*}
	uniformly in $k\in\NN$ and $\te\in\big(c(k+1)^{-1},\frac{2\pi}{3}\big)$. This finishes the proof.
\end{proof}

The following result is an analogue of Lemmas \ref{lm:standard_Laguerre_sharp_estimates} and \ref{lm:Laguerre_Hermite_sharp_estimates}.

\begin{lm}\label{lm:Jacobi_fun_sharp_estimates}
	Let $j,\ell\in\NN$ and $\al,\be\geq -1/2$. There exists $c>0$ such that
	\begin{equation*}
	\frac{d^j}{d\te^j} \frac{\jfunk(\te)}{\big( \sin\frac{\te}{2}\big)^{\al+1/2-\ell}}\simeq \left\{
	\begin{array}{cc}
	(k+1)^{\al+1/2}\, \te^{\ell-j},& \ell\geq j,\\
	(-1)^{\lceil \frac{j-\ell}{2}\rceil} (k+1)^{\al + 1/2+2\lceil \frac{j-\ell}{2}\rceil} \te^{\frac{1-(-1)^{j-\ell}}{2}}, & \ell\leq j, 
	\end{array}
	\right.
	\end{equation*}
	uniformly in $k\in\NN$ and $\te\in\big(0,c(k+1)^{-1}\big)$, and
	\begin{equation*}
	\frac{d^j}{d\te^j} \frac{\jfunk(\te)}{\big( \cos\frac{\te}{2}\big)^{\be+1/2-\ell}}\simeq\left\{
	\begin{array}{cc}
	(k+1)^{\be+1/2}\, (\pi-\te)^{\ell-j},& \ell\geq j,\\
	(-1)^{\lceil \frac{j-\ell}{2}\rceil} (k+1)^{\be + 1/2+2\lceil \frac{j-\ell}{2}\rceil} (\pi-\te)^{\frac{1-(-1)^{j-\ell}}{2}}, & \ell\leq j, 
	\end{array}
	\right.
	\end{equation*}
	uniformly in $k\in\NN$ and $\te\in\big(\pi-c(k+1)^{-1},\pi\big)$.
\end{lm}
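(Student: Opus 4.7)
The plan is to mirror the induction scheme of Lemma \ref{lm:Laguerre_Hermite_sharp_estimates}. By the reflection identity \eqref{eq:30} together with $\cos(\te/2) = \sin((\pi-\te)/2)$, the second estimate (near $\te = \pi$) reduces to the first under the substitution $\al \leftrightarrow \be$ and $\te \leftrightarrow \pi-\te$, so it suffices to establish the first. Introduce the abbreviation $\Phi_{k,\ell}^{\al,\be}(\te) := \jfunk(\te)/(\sin(\te/2))^{\al+1/2-\ell}$, which by \eqref{eq:jfunk_definition} equals $(\sin(\te/2))^{\ell}(\cos(\te/2))^{\be+1/2}\jtpolk(\te)$.

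The base case $j=0$ follows from $c_k^{\al,\be}\simeq(k+1)^{1/2}$, $P_k^{\al,\be}(1)=\binom{k+\al}{k}\simeq(k+1)^{\al}$, and the elementary estimate $|P_k^{\al,\be}(\cos\te)-P_k^{\al,\be}(1)|\ll P_k^{\al,\be}(1)$ on $(0,c(k+1)^{-1})$ for $c$ small (the quickest route is the Bernstein-type bound $|(P_k^{\al,\be})'(1)|\lesssim k(k+1)^{\al}$ together with $1-\cos\te\lesssim(k+1)^{-2}$, controlling the increment), combined with $\sin(\te/2)\simeq\te$ and $\cos(\te/2)\simeq1$ on the same interval. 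This gives $\Phi_{k,\ell}^{\al,\be}(\te)\simeq(k+1)^{\al+1/2}\te^{\ell}$ with a constant sign.

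For the inductive step, using \eqref{eq:39} and $\frac{d}{d\te}(\sin(\te/2))^{-(\al+1/2-\ell)}=-\frac{\al+1/2-\ell}{2}\cos(\te/2)(\sin(\te/2))^{-(\al+3/2-\ell)}$, a direct calculation (in which the algebraic cancellation $\tfrac{2\al+1}{4}-\tfrac{\al+1/2-\ell}{2}=\tfrac{\ell}{2}$ occurs) yields the Jacobi analogue of \eqref{eq:23}, namely
\begin{equation*}
\frac{d}{d\te}\Phi_{k,\ell}^{\al,\be}(\te)=-k_{\al,\be}\Phi_{k-1,\ell+1}^{\al+1,\be+1}(\te)+\frac{\ell}{2}\cos\tfrac{\te}{2}\,\Phi_{k,\ell-1}^{\al,\be}(\te)-\frac{2\be+1}{4}\tan\tfrac{\te}{2}\,\Phi_{k,\ell}^{\al,\be}(\te).
\end{equation*}
Differentiate $j$ times by the Leibniz rule: since $\cos(\te/2)$, $\tan(\te/2)$ and all their derivatives are bounded on $(0,c(k+1)^{-1})$ while $\cos(\te/2)\simeq 1$ and $\tan(\te/2)\simeq\te/2$, the dominant contributions come from putting all derivatives on the $\Phi$ factors. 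Apply the inductive hypothesis to each of the three terms. If $\ell\geq j+1$, the three resulting summands have sizes $(k+1)^{\al+5/2}\te^{\ell-j+1}$, $(k+1)^{\al+1/2}\te^{\ell-1-j}$, and $(k+1)^{\al+1/2}\te^{\ell-j+1}$, respectively, and on our small interval the second (telescope) term dominates and matches the target $\te^{\ell-(j+1)}$ with positive sign. If $\ell\leq j$, the telescope term must be iteratively decomposed exactly as in the proof of Lemma \ref{lm:Laguerre_Hermite_sharp_estimates}: each iteration strips off a factor in the product $\ell(\ell-1)\cdots$, which collapses to $0$ after $\ell$ steps, after which only contributions from the base term $-k_{\al,\be}\Phi_{k-1,\ell+1}^{\al+1,\be+1}$ and the lower-order term $-\frac{2\be+1}{4}\tan(\te/2)\Phi_{k,\ell}^{\al,\be}$ remain. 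Each pairing of derivative applications against the base term multiplies by $k_{\al,\be}^2\simeq(k+1)^2$ and flips the sign, producing the factor $(-1)^{\lceil(j+1-\ell)/2\rceil}(k+1)^{\al+1/2+2\lceil(j+1-\ell)/2\rceil}$ while the $\te$-power alternates between $1$ and $\te$ according to the parity of $j+1-\ell$.

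The main obstacle is the bookkeeping for the case $\ell\leq j$: one must verify that the telescope cascade terminates exactly when expected, that the surviving combination of base/lower-order contributions has the claimed sign $(-1)^{\lceil(j-\ell)/2\rceil}$ (using the identity $\lceil(i-1)/2\rceil+1=\lceil(i+1)/2\rceil$ exactly as in the proof of Lemma \ref{lm:Laguerre_Hermite_sharp_estimates}), and that all spurious terms produced by distributing derivatives onto $\cos(\te/2)$ or $\tan(\te/2)$ via Leibniz carry an extra power of $\te$ or a reduction in $k$-growth and are therefore absorbed into the error on $(0,c(k+1)^{-1})$.
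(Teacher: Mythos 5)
Your proposal follows essentially the same route as the paper's proof: reduction to the first estimate via \eqref{eq:30}, induction on $j$, the same recurrence derived from \eqref{eq:39} (your displayed identity is exactly the paper's, rewritten in your $\Phi$ notation, with the same cancellation producing the coefficient $\ell/2$), the same three-way size comparison in the cases $\ell\geq j+1$ and $\ell\leq j$, and the same iterative decomposition of the telescope term borrowed from Lemma \ref{lm:Laguerre_Hermite_sharp_estimates}. The only genuine divergence is the base case $j=0$, which you prove directly while the paper cites \cite[(A.1) and (A.2)]{Plewa_2019_JAT}; there you misstate the derivative bound, since $(\jpolk)'(1)=\frac{k+\al+\be+1}{2}\,P_{k-1}^{\al+1,\be+1}(1)\simeq(k+1)^{\al+2}$, not $\lesssim k(k+1)^{\al}$. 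The slip is harmless: with the correct bound the increment is $\lesssim(k+1)^{\al+2}(1-\cos\te)\lesssim c^{2}(k+1)^{\al}$ on $\big(0,c(k+1)^{-1}\big)$, still much smaller than $\jpolk(1)\simeq(k+1)^{\al}$ once $c$ is small, which your argument already requires. One further caution on your closing paragraph: a ``reduction in $k$-growth'' alone does not make a spurious Leibniz term negligible if that term also has a lower power of $\te$ (it would dominate as $\te\to0$); what actually saves the bookkeeping is parity --- $\Phi_{k,\ell}^{\al,\be}$ is $\te^{\ell}$ times an even function of $\te$, and $\cos(\te/2)$, $\tan(\te/2)$ have definite parity, so every Leibniz term carries the same $\te$-parity as the main term and all comparisons reduce to powers of $\big((k+1)\te\big)^{2}\leq c^{2}$.
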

\begin{proof}
	It suffices to prove the first estimate. The reasoning is similar to the ones used in the proofs of Lemmas \ref{lm:standard_Laguerre_sharp_estimates} and \ref{lm:Laguerre_Hermite_sharp_estimates}, therefore we will only sketch it.
	
	Fix $j,\ell,\al,\be$ as in the hypothesis. We will use the induction over $j$. For $j=0$ see \cite[(A.1) and (A.2)]{Plewa_2019_JAT}. For the inductive step observe that
	\begin{align*}
	\begin{split}
	\frac{d^{j+1}}{d\te^{j+1}} \frac{\jfunk(\te)}{\big( \sin\frac{\te}{2}\big)^{\al+1/2-\ell}}&=\frac{d^j}{d\te^j}\Big( \frac{\ell}{2}\frac{\cos\frac{\te}{2} \ \jfunk(\te)}{\big( \sin\frac{\te}{2}\big)^{\al+3/2-\ell}} -\frac{2\be+1}{4}\frac{\jfunk(\te)}{\cos\frac{\te}{2} \big( \sin\frac{\te}{2}\big)^{\al-1/2-\ell}}\\
	&\qquad -\frac{k_{\al,\be} \phi_{k-1}^{\al+1,\be+1}(\te)}{\big(\sin\frac{\te}{2}\big)^{\al+1/2-\ell}}\Big).
	\end{split}
	\end{align*}
	If $j\leq \ell-1$, then the first implied component is the largest on the right hand side of the above equality. It is positive and of the desired size $(k+1)^{\al+1/2}\, \te^{\ell-j-1}$. Secondly, the case $j=\ell$ can be checked directly. On the other hand, if $j\geq\ell+1$, then for sufficiently small $c>0$ the second summand is of the sign and size 
	\begin{equation*}
	(-1)^{\lceil \frac{j-\ell-1}{2}\rceil+1} (k+1)^{\al + 1/2+2\lceil \frac{j-\ell-1}{2}\rceil} \te^{\frac{1-(-1)^{j-\ell-1}}{2}},
	\end{equation*}
	and the third
	\begin{equation*}
	(-1)^{\lceil \frac{j-\ell-1}{2}\rceil+1} (k+1)^{\al + 3/2+2\lceil \frac{j-\ell-1}{2}\rceil} \te^{\frac{1-(-1)^{j-\ell-1}}{2}}.
	\end{equation*}
	But $\lceil \frac{i-1}{2}\rceil+1=\lceil \frac{i+1}{2}\rceil$, $i\in\NN$, and hence the latter is greater and can be written as
	\begin{equation*}
	(-1)^{\lceil \frac{j+1-\ell}{2}\rceil} (k+1)^{\al + 1/2+2\lceil \frac{j+1-\ell}{2}\rceil} \te^{\frac{1-(-1)^{j+1-\ell}}{2}},
	\end{equation*}
	which finishes the inductive step.
\end{proof}

\begin{lm}\label{lm:Jacobi_fun_Linfty_derivatives}
	Let $j\in\NN$. For $\al\in\{-1/2,1/2,\ldots,j-1/2\}\cup(j-1/2,\infty)$, and $\be\geq -1/2$, there is
	\begin{equation*}
	\Big\Vert \big(\jfunk\big)^{(j)}\Big\Vert_{L^\infty(0,\frac{2\pi}{3})} \lesssim (k+1)^{j},\qquad k\in\NN,
	\end{equation*}
	whereas for $\al\geq -1/2$ and $\be\in\{-1/2,1/2,\ldots,j-1/2\}\cup (j-1/2,\infty)$, we have
	\begin{equation*}
	\Big\Vert \big(\jfunk\big)^{(j)}\Big\Vert_{L^\infty(\frac{\pi}{3},\pi)} \lesssim (k+1)^{j},\qquad k\in\NN. 
	\end{equation*}
\end{lm}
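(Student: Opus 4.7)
The strategy mirrors closely the proof of Lemma \ref{lm:Laguerre_Hermite_Linfty_derivatives}: reduce to the ``left'' estimate by symmetry, split the interval into the transition region near $0$ and the bulk, and then invoke the two estimates already at our disposal (Lemmas \ref{lm:Jacobi_fun_estim} and \ref{lm:Jacobi_fun_sharp_estimates}). All the analytic work has in fact been done; what remains is a careful bookkeeping argument.

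First I would observe that the symmetry relation \eqref{eq:30} implies, by iterated differentiation, $(\jfunk)^{(j)}(\te)=(-1)^j(\phi_k^{\be,\al})^{(j)}(\pi-\te)$. Hence the second bound on $(\pi/3,\pi)$ is the first bound applied to the system with parameters swapped, evaluated on the image interval $(0,2\pi/3)$. Consequently, it suffices to prove only the first assertion.

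Fix $j\in\NN$, $\al\in\{-1/2,1/2,\ldots,j-1/2\}\cup(j-1/2,\infty)$, $\be\geq-1/2$, and let $c>0$ be the small constant appearing in Lemmas \ref{lm:Jacobi_fun_estim} and \ref{lm:Jacobi_fun_sharp_estimates}. I split $(0,2\pi/3)$ into $I_1=(c(k+1)^{-1},2\pi/3)$ and $I_2=(0,c(k+1)^{-1})$. On $I_1$ the second branch of Lemma \ref{lm:Jacobi_fun_estim} gives immediately $|(\jfunk)^{(j)}(\te)|\lesssim (k+1)^j$, so the entire issue is confined to $I_2$.

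On $I_2$ I separate the two subcases built into the hypothesis on $\al$. If $\al\geq j-1/2$, the first branch of Lemma \ref{lm:Jacobi_fun_estim} together with $\te\leq c(k+1)^{-1}$ yields
\begin{equation*}
|(\jfunk)^{(j)}(\te)|\lesssim \te^{\al+1/2-j}(k+1)^{\al+1/2}\lesssim (k+1)^{j-\al-1/2}(k+1)^{\al+1/2}=(k+1)^j,
\end{equation*}
as desired. If instead $\al+1/2\in\{0,1,\ldots,j-1\}$, set $\ell=\al+1/2$, so that $\ell\leq j$ and the quotient in Lemma \ref{lm:Jacobi_fun_sharp_estimates} reduces to $\jfunk(\te)$ itself. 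Writing $m=j-\ell\geq 1$, that lemma gives
\begin{equation*}
|(\jfunk)^{(j)}(\te)|\simeq (k+1)^{\al+1/2+2\lceil m/2\rceil}\,\te^{(1-(-1)^m)/2}.
\end{equation*}
For even $m$ the exponent of $k+1$ is exactly $\al+1/2+m=j$ and the $\te$-factor equals $1$; for odd $m$ the exponent of $k+1$ is $\al+1/2+m+1=j+1$ but the $\te$-factor is $\te\leq c(k+1)^{-1}$, again producing the bound $(k+1)^j$. In either situation we obtain the claimed estimate uniformly on $I_2$.

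The only genuine difficulty is the case where $\al+1/2$ is a small integer and the standard pointwise bound of Lemma \ref{lm:Jacobi_fun_estim} would badly overshoot (since $\te^{\al+1/2-j}$ blows up at $0$); the purpose of Lemma \ref{lm:Jacobi_fun_sharp_estimates} is precisely to capture the hidden cancellations via the $\te^{(1-(-1)^{j-\ell})/2}$ factor, and the computation above shows that this cancellation is exactly of the right strength to restore the expected exponent $j$. Once both cases on $I_2$ are handled, combining with the $I_1$ estimate finishes the proof.
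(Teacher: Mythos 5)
Your proof is correct and follows essentially the same route as the paper's: reduce to the left endpoint via \eqref{eq:30}, use the second branch of Lemma \ref{lm:Jacobi_fun_estim} away from $0$, the first branch when $\al\geq j-1/2$, and Lemma \ref{lm:Jacobi_fun_sharp_estimates} with $\ell=\al+1/2$ when $\al+1/2\in\NN$ is smaller than $j$. The only difference is that you spell out the bookkeeping (the split into $I_1$, $I_2$ and the parity check on $j-\ell$) that the paper leaves implicit.
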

\begin{proof}
	Observe that the latter estimate follows from the former by \eqref{eq:30}. Thus we fix $j\in\NN$, $\al$, and $\be$ as in the first hypothesis. We justify the bound on $(0, \frac{2\pi}{3})$. Observe that for $\al\geq j-1/2$ it suffices to use Lemma \ref{lm:Jacobi_fun_estim}. On the other hand, if $j<\al+1/2\in\NN$, then use Lemma \ref{lm:Jacobi_fun_sharp_estimates} with $\ell=\al+1/2$. This concludes the proof.
\end{proof}

\begin{lm}\label{lm:Jacobi_fun_diff}
	Let $j\in\NN$. If $\al\in (j-1/2,j+1/2)$ and $\be\geq -1/2$, then
	\begin{equation*}
	\big|(\jfunk)^{(j)}(\te)-(\jfunk)^{(j)}(\te')\big|\lesssim (k+1)^{j+1} |\te-\te'|+(k+1)^{\al+1/2} |\te-\te'|^{\al+1/2-j},
	\end{equation*}
	uniformly in $ k\in\NN$ and $\te,\te'\in(0,\frac{2\pi}{3})$.
	Similarly, for $\al\geq -1/2$ and $\be\in (j-1/2,j+1/2)$, 
	\begin{equation*}
	\big|(\jfunk)^{(j)}(\te)-(\jfunk)^{(j)}(\te')\big|\lesssim (k+1)^{j+1} |\te-\te'|+(k+1)^{\be+1/2} |\te-\te'|^{\be+1/2-j},
	\end{equation*}
	uniformly in $ k\in\NN$ and $\te,\te'\in(\frac{\pi}{3},\pi)$.
\end{lm}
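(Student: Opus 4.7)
The second inequality follows from the first via the reflection identity \eqref{eq:30}, so the plan focuses on the estimate on $(0, 2\pi/3)$. I will mirror the strategy of Lemma \ref{lm:Laguerre_Hermite_fun_diff}: write
\[
(\jfunk)^{(j)}(\te)-(\jfunk)^{(j)}(\te')=\int_{\te'}^{\te}(\jfunk)^{(j+1)}(s)\,ds,
\]
and then express $(\jfunk)^{(j+1)}$ via the recurrence \eqref{eq:39}, differentiated $j$ times. Using the Leibniz rule and the bounds $|(\cot(\te/2))^{(i)}|\lesssim \te^{-(i+1)}$, $|(\tan(\te/2))^{(i)}|\lesssim 1$ on $(0,2\pi/3)$, the integrand $|(\jfunk)^{(j+1)}(s)|$ is controlled by a linear combination of terms of the form $k_{\al,\be}|(\phi_{k-1}^{\al+1,\be+1})^{(j)}(s)|$, $s^{-\ell-1}|(\jfunk)^{(j-\ell)}(s)|$ ($\ell=0,\ldots,j$), and $|(\jfunk)^{(j-i)}(s)|$.

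Next I would split the integration at the threshold $c(k+1)^{-1}$ and use Lemma \ref{lm:Jacobi_fun_estim} on each piece. On $[\te',\te]\cap(c(k+1)^{-1},2\pi/3)$ every piece in the bound above is at most $(k+1)^{j+1}$ (using Lemma \ref{lm:Jacobi_fun_estim} with parameters $(\al,\be)$ and $(\al+1,\be+1)$), producing the contribution $(k+1)^{j+1}|\te-\te'|$. On $[\te',\te]\cap(0,c(k+1)^{-1})$ the small-argument branch of Lemma \ref{lm:Jacobi_fun_estim} yields $|(\jfunk)^{(j+1)}(s)|\lesssim (k+1)^{\al+1/2}s^{\al-1/2-j}$; since $\al\in(j-1/2,j+1/2)$, the exponent $\al-1/2-j$ lies in $(-1,0)$, so
\[
\int_{\te'}^{\te} s^{\al-1/2-j}\,ds\lesssim |\te-\te'|^{\al+1/2-j},
\]
which gives the second term of the claimed bound.

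The one subtlety to handle with care will be the mixed case where $\te'<c(k+1)^{-1}<\te$: the contribution of $[\te',c(k+1)^{-1}]$ is then at most $(k+1)^{\al+1/2}(c(k+1)^{-1})^{\al+1/2-j}\lesssim (k+1)^{j}\leq (k+1)^{j+1}|\te-\te'|\cdot(k+1)/((k+1)|\te-\te'|)$; provided $(k+1)|\te-\te'|\gtrsim 1$ this is absorbed by the first term, and when $(k+1)|\te-\te'|\lesssim 1$ one instead bounds this contribution by $(k+1)^{\al+1/2}|\te-\te'|^{\al+1/2-j}$ directly, absorbed by the second term. No genuinely hard step is expected; the main care lies in bookkeeping the case split and verifying that all auxiliary factors appearing after the $j$-fold differentiation of \eqref{eq:39} respect the size estimates of Lemma \ref{lm:Jacobi_fun_estim} so that no extra power of $k+1$ or negative power of $\te$ slips through.
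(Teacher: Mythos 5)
Your proposal is correct and follows essentially the same route as the paper: reduction via \eqref{eq:30}, the fundamental theorem of calculus applied to the recurrence \eqref{eq:39}, Leibniz together with the $\cot$/$\tan$ derivative bounds, Lemma \ref{lm:Jacobi_fun_estim} in its two regimes, and the elementary estimate $\int_{\te'}^{\te}\omega^{\al-1/2-j}\,d\omega\lesssim|\te-\te'|^{\al+1/2-j}$. The only difference is your extra case analysis for $\te'<c(k+1)^{-1}<\te$, which is unnecessary: since $\al-1/2-j\in(-1,0)$ the integrand is decreasing, so translating the interval of integration to $[0,\te-\te']$ gives the bound $|\te-\te'|^{\al+1/2-j}$ on the small piece uniformly, with no dichotomy on the size of $(k+1)|\te-\te'|$.
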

\begin{proof}
	Again, by \eqref{eq:30} we justify only the first estimate. Fix $j,\al,\be$ as in the hypothesis. For $0<\te'<\te<\frac{2\pi}{3}$ we write the difference $(\jfunk)^{(j)}(\te)-(\jfunk)^{(j)}(\te')$ as
	\begin{equation*}
	\int_{\te'}^\te \frac{d^j}{d\omega^j}\Big(  -k_{\al,\be} \phi_{k-1}^{\al+1,\be+1}(\omega)
	+\Big(\frac{2\al+1}{4}\cot\frac{\omega}{2} -\frac{2\be+1}{4}\tan\frac{\omega}{2}\Big)\jfunk(\omega) \Big)\, d\omega.
	\end{equation*}
	Thus, by the first bound in Lemma \ref{lm:Jacobi_fun_estim} we obtain
	\begin{align*}
	\big|(\jfunk)^{(j)}(\te)-(\jfunk)^{(j)}(\te')\big|\lesssim (k+1)^{j+1}|\te-\te'| + \int_{\te'}^\te \Big|\frac{d^j}{d\omega^j}\Big(\cot\frac{\omega}{2} \jfunk(\omega)\Big)\Big|\,d\omega,
	\end{align*}
	uniformly in $k\in\NN$ and $\te,\te'\in(0,\frac{2\pi}{3})$.	Using Lemma \ref{lm:Jacobi_fun_estim} we estimate the latter integral uniformly in the indicated ranges, up to a multiplicative constant, by
	\begin{equation*}
	 (k+1)^{j+1}|\te-\te'|+ (k+1)^{\al+1/2}\int_{\te'}^\te \omega^{\al-1/2-j}\,d\omega.
	\end{equation*}
	The conclusion follows since the latter integral is $\lesssim |\te-\te'|^{\al+1/2-j}$.
\end{proof}

Now we pass to the verification of Lipschitz and $BMO$ properties of the Jacobi functions. Observe that for $\al,\be\in[-1/2,\infty)^d$, $\jfun\in L^\infty((0,\pi)^d)\subset BMO((0,\pi)^d)$. We will justify that $\jfun\in\Lambda_{\nu}\big((0,\pi)^d\big)$, $\nu>0$, for appropriate parameters $\al$ and $\be$. For this purpose we will define an extension $\tilde{\phi}^{\al,\be}_k$ of $\jfunk$ to whole $\RR$ such that $\tilde{\phi}^{\al,\be}_k\in\Lambda_{\nu}(\RR)$, and then apply Lemma \ref{lm:tensor_in_Lambda_nu} for the multi-dimensional version.

Fix $\al$ and $\be$ such that $\al+1/2,\be+1/2\in\NN$. We extend the initial definition of $\jfunk(\te)$, see \eqref{eq:jfunk_definition}, to the whole $\RR$. Note that for $j\in\NN$ and $\te\in(j\pi,(j+1)\pi)$ there holds
\begin{equation*}
\jfunk(\te)=\left\{\begin{array}{cc}
\jfunk(\te-j\pi),& j\equiv 0 \mod 4,\\
(-1)^{\be+1/2}\jfunk((j+1)\pi-\te),& j\equiv 1 \mod 4,\\
(-1)^{\al+\be+1}\jfunk(\te-j\pi),& j\equiv 2 \mod 4,\\
(-1)^{\al+1/2}\jfunk((j+1)\pi-\te),& j\equiv 3 \mod 4.
\end{array}
\right.
\end{equation*}
We remark that the second (fourth, resp.) line on the right hand side of the formula above makes sense also when $\al+1/2$ ($\be+1/2$,  resp.) is not an integer. Moreover, if $\al+1/2\in\NN$ ($\be+1/2\in\NN$, resp.), then $\jfunk(2j\pi)$ ($\jfunk((2j+1)\pi)$, resp.) is naturally defined for $j\in\NN$.
 
Now we define the extension $\tilde{\phi}^{\al,\be}_k$ of $\jfunk$. If both  $\al+1/2,\be+1/2\in\NN$, then 
\begin{equation*}
\tilde{\phi}^{\al,\be}_k(\te)=\jfunk(\te),\qquad \te\in\RR.
\end{equation*}
Secondly, if $\al+1/2\in\NN$ and $\be+1/2\notin\NN$, then
\begin{equation*}
\tilde{\phi}^{\al,\be}_k(\te)=\left\{ \begin{array}{cc}
\jfunk(\te),& \te\in(-\pi,\pi),\\
0,& \te\notin(-\pi,\pi).
\end{array} \right.
\end{equation*}
Similarly, if $\al+1/2\notin\NN$ and $\be+1/2\in\NN$, then
\begin{equation*}
\tilde{\phi}^{\al,\be}_k(\te)=\left\{ \begin{array}{cc}
\jfunk(\te),& \te\in(0,2\pi),\\
0,& \te\notin(0,2\pi).
\end{array} \right.
\end{equation*}
Finally, if both $\al+1/2,\be+1/2\notin\NN$, then we put
\begin{equation*}
\tilde{\phi}^{\al,\be}_k(\te)=\left\{ \begin{array}{cc}
\jfunk(\te),& \te\in(0,\pi),\\
0,& \te\notin(0,\pi).
\end{array} \right.
\end{equation*}
Notice that $\tilde{\phi}^{\al,\be}_k\in\mathcal{C}^{\min(\bal,\bbe)}(\RR)$, where  we used the one-off notation $\bal=\lfloor\al+1/2\rfloor$ if $\al+1/2\notin\NN$ and $\bal=\infty$ otherwise, and similarly for $\bbe$.

Now, by Lemmas \ref{lm:Jacobi_fun_Linfty_derivatives}, \ref{lm:Jacobi_fun_diff}, and \ref{lm:tensor_in_Lambda_nu}, we have the following result.
\begin{lm}\label{lm:Jacobi_fun_Lipschitz} 
	If $p\in(0,1]$ and $\al,\be\in \big(\{-1/2,1/2,\ldots,P-1/2\}\cup[d(p^{-1}-1)-1/2,\infty)\big)^d$, where $P=\lfloor d(p^{-1}-1)\rfloor$, then $\jfun\in\Lambda_{d(\frac{1}{p}-1)}((0,\pi)^d)$.
\end{lm}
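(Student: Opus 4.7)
The plan is to reduce the multi-dimensional statement to its one-dimensional analogue via Lemma \ref{lm:tensor_in_Lambda_nu}, and then verify that each factor $\tilde{\phi}^{\al_i,\be_i}_{n_i}$ constructed above lies in $\Lambda_{\nu}(\RR)$ with $\nu:=d(p^{-1}-1)$. Before invoking Lemma \ref{lm:tensor_in_Lambda_nu} one must check its extra requirement at $\nu=1$, namely that $(\jfuni)'$ exists and is bounded; this follows from Lemma \ref{lm:Jacobi_fun_Linfty_derivatives} applied on $(0,\tfrac{2\pi}{3})$ and on $(\tfrac{\pi}{3},\pi)$, since the hypothesis on $(\al,\be)$ forces $\al_i,\be_i\in\{-1/2\}\cup[1/2,\infty)$ whenever $d(p^{-1}-1)\geq 1$.

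For the one-dimensional verification, split according to whether $\nu\notin\NN$ or $\nu\in\NN_+$. The $L^\infty$-bound on $\tilde{\phi}^{\al,\be}_k$ comes from \eqref{eq:27}. For $\nu\notin\NN$ we must control $\partial^j\tilde{\phi}^{\al,\be}_k$ in $L^\infty(\RR)$ for $0\leq j\leq\lfloor\nu\rfloor$ and establish the Hölder estimate of order $\nu-\lfloor\nu\rfloor$ for $j=\lfloor\nu\rfloor$. The sup-norm bounds are immediate from Lemma \ref{lm:Jacobi_fun_Linfty_derivatives} on $(0,\tfrac{2\pi}{3})$ and $(\tfrac{\pi}{3},\pi)$ separately, because the hypothesis places $\al+1/2$ (and by \eqref{eq:30} also $\be+1/2$) either in $\{0,1,\ldots,P\}$ or in $[\nu,\infty)\subset(\lfloor\nu\rfloor-1/2,\infty)$, which is precisely the range allowed in that lemma for $j=\lfloor\nu\rfloor$. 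The Hölder step for $j=\lfloor\nu\rfloor$ on each of the two overlapping intervals follows from Lemma \ref{lm:Jacobi_fun_diff} whenever $\al+1/2$ (resp. $\be+1/2$) lies in $(\lfloor\nu\rfloor-1/2,\lfloor\nu\rfloor+1/2)$; in the remaining case the parameter is at least $\lfloor\nu\rfloor+1/2$, so $\partial^{\lfloor\nu\rfloor+1}\tilde{\phi}^{\al,\be}_k$ is bounded by Lemma \ref{lm:Jacobi_fun_Linfty_derivatives} and the mean value theorem supplies the stronger Lipschitz condition. When $\nu\in\NN_+$, exactly the same two tools, together with the elementary second-difference estimate $|g(\te+h)-2g(\te)+g(\te-h)|\leq |h|\,\Vert g'\Vert_{L^\infty}$ applied to $g=\partial^{\nu-1}\tilde{\phi}^{\al,\be}_k$, yield the defining inequality of $\Lambda_{\nu}(\RR)$.

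The delicate point, which I expect to be the main obstacle, is verifying these estimates across the "join" points where the extension changes character, i.e.\ where $\tilde{\phi}^{\al,\be}_k$ is patched to $0$ (at $\te=0$ when $\al+1/2\notin\NN$, or at $\te=\pi$ when $\be+1/2\notin\NN$), and where it is extended by reflection/periodicity (when one of $\al+1/2,\be+1/2$ is an integer). For the reflection-type extensions the function automatically belongs to $\mathcal{C}^\infty$ locally near the join and there is nothing to check beyond the sup-norm bound just established. For the vanishing-type extensions one must show that $\partial^{\lfloor\nu\rfloor}\tilde{\phi}^{\al,\be}_k$ is Hölder continuous of order $\nu-\lfloor\nu\rfloor$ across $\te=0$; since here $\al+1/2\geq\nu$ the factor $(\sin(\te/2))^{\al+1/2}$ in \eqref{eq:jfunk_definition} vanishes to order $\al+1/2$ at $0$, and Lemma \ref{lm:Jacobi_fun_estim} provides the matching pointwise decay $|\partial^j\jfunk(\te)|\lesssim\te^{\al+1/2-j}$. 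Combining this decay on one side with the side-wise Hölder estimate from Lemma \ref{lm:Jacobi_fun_diff} on the other, and using $\partial^j\tilde{\phi}^{\al,\be}_k\equiv 0$ outside $(0,\pi)$, a standard splitting argument yields the required global Hölder bound across $\te=0$; the symmetric argument at $\te=\pi$ is handled via \eqref{eq:30}. Once each one-dimensional factor is placed in $\Lambda_\nu(\RR)$, Lemma \ref{lm:tensor_in_Lambda_nu} delivers $\tilde{\phi}^{\al,\be}_n\in\Lambda_\nu(\RR^d)$, and restricting to $(0,\pi)^d$ proves the lemma.
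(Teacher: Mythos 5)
Your proposal is correct and follows essentially the same route as the paper: extend each one-dimensional factor to $\RR$ as $\tilde{\phi}^{\al,\be}_k$, verify membership in $\Lambda_\nu(\RR)$ using the sup-norm derivative bounds of Lemma \ref{lm:Jacobi_fun_Linfty_derivatives} together with the H\"older difference estimates of Lemma \ref{lm:Jacobi_fun_diff} (with the decay from Lemma \ref{lm:Jacobi_fun_estim} handling the zero-extension joins), and conclude by the tensor-product Lemma \ref{lm:tensor_in_Lambda_nu}. Only a harmless index slip: Lemma \ref{lm:Jacobi_fun_diff} with $j=\lfloor\nu\rfloor$ applies when $\al\in(\lfloor\nu\rfloor-1/2,\lfloor\nu\rfloor+1/2)$, i.e.\ $\al+1/2\in(\lfloor\nu\rfloor,\lfloor\nu\rfloor+1)$, rather than $\al+1/2\in(\lfloor\nu\rfloor-1/2,\lfloor\nu\rfloor+1/2)$ as written, but your case analysis is otherwise exactly the intended one.
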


\subsection{Hardy's inequality}

The one-dimensional kernels $\jRop(\te,\va)$, $r\in(0,1)$, $\te,\va\in(0,\pi)$, associated with the Jacobi functions are defined via (cf. \eqref{R_def}) 
\begin{equation*}
\jRop(\te,\va)=\sum_{k\in\NN} r^k \jfunk(\te) \jfunk(\va).
\end{equation*}
For an explicit formula see \cite[(4.1), (2.6), and (2.3)]{Plewa_2019_JAT}).

Notice that by Parseval's identity and interchanging the differentiation with the summation, which is allowed due to Lemma \ref{lm:Jacobi_fun_Linfty_derivatives} and the Lebesgue dominated convergence theorem, and we obtain the following lemma. 

\begin{lm}\label{lm:Jacobi_fun_Rop}
	If $j\in\NN$, and $\al,\be\in\{-1/2,1/2,\ldots,j-1/2\}\cup (j-1/2,\infty)$, then
\begin{equation*}
\sup_{\te\in(0,\pi)} \big\Vert \partial_\te^j \jRop(\te,\cdot)\big\Vert_{L^2((0,\pi))}\lesssim (1-r)^{-(j+1)/2},\qquad r\in(0,1).
\end{equation*}
\end{lm}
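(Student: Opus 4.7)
The plan is to apply Parseval's identity in the orthonormal basis $\{\jfunk\}_{k\in\NN}$ of $L^2((0,\pi))$, combined with the uniform $L^\infty$ bounds on derivatives of the Jacobi functions established in Lemma~\ref{lm:Jacobi_fun_Linfty_derivatives}. This follows the same template as the argument for Lemma~\ref{lm:Laguerre_standard_R_deriv} in the standard Laguerre setting.

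First I would verify that differentiation and summation may be exchanged in the definition $\jRop(\te,\va)=\sum_{k\in\NN}r^k\jfunk(\te)\jfunk(\va)$. The hypothesis $\al,\be\in\{-1/2,1/2,\ldots,j-1/2\}\cup(j-1/2,\infty)$ is tailored precisely so that Lemma~\ref{lm:Jacobi_fun_Linfty_derivatives} applies: the assumption on $\al$ yields $\|(\jfunk)^{(i)}\|_{L^\infty(0,2\pi/3)}\lesssim (k+1)^i$, while the assumption on $\be$ yields $\|(\jfunk)^{(i)}\|_{L^\infty(\pi/3,\pi)}\lesssim(k+1)^i$, for every $0\le i\le j$; together these two intervals cover $(0,\pi)$. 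Combined with $\|\jfunk\|_{L^\infty}\lesssim 1$ (cf.\ \eqref{eq:27}) and $|r^k|\le r_0^k$ on any compact subinterval $r\in[0,r_0]\subset[0,1)$, this furnishes a uniformly convergent majorant for the differentiated series, so the Lebesgue dominated convergence theorem gives
\begin{equation*}
\partial_\te^j\jRop(\te,\va)=\sum_{k\in\NN} r^k (\jfunk)^{(j)}(\te)\,\jfunk(\va).
\end{equation*}

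Next I would invoke Parseval's identity in $\va$: the display above expresses $\partial_\te^j\jRop(\te,\cdot)$ in the orthonormal basis $\{\jfunk\}_{k\in\NN}$ with coefficients $r^k(\jfunk)^{(j)}(\te)$, whence
\begin{equation*}
\big\Vert\partial_\te^j\jRop(\te,\cdot)\big\Vert_{L^2((0,\pi))}^2=\sum_{k\in\NN}r^{2k}\big|(\jfunk)^{(j)}(\te)\big|^2 \lesssim \sum_{k\in\NN}r^{2k}(k+1)^{2j},
\end{equation*}
uniformly in $\te\in(0,\pi)$. Summing this elementary power series and taking the square root then yields the claimed bound. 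There is no genuine analytic obstacle in the present lemma; all the delicate work has already been done in the derivation of the $L^\infty$ estimates for $(\jfunk)^{(j)}$, and the statement is essentially a clean corollary, entirely parallel to Lemma~\ref{lm:Laguerre_standard_R_deriv}.
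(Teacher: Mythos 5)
Your argument is correct and is precisely the paper's proof: the paper disposes of this lemma in a single sentence citing exactly the same ingredients (term-by-term differentiation justified by Lemma~\ref{lm:Jacobi_fun_Linfty_derivatives} and dominated convergence, then Parseval's identity), in full parallel with Lemma~\ref{lm:Laguerre_standard_R_deriv}. One caveat: your final series satisfies $\sum_{k\in\NN}r^{2k}(k+1)^{2j}\simeq(1-r)^{-2j-1}$, so taking square roots yields $(1-r)^{-(2j+1)/2}$ and not the exponent $-(j+1)/2$ as printed in the statement; the printed exponent is evidently a misprint for $-(2j+1)/2$ (this is the form actually used later, e.g.\ the term $(1-r)^{-(j+3/2)}$ in the subsequent proposition arises from applying the lemma at order $j+1$, and Proposition~\ref{prop:Jacobi_fun_cond_C} needs the $\gamma=1/2$ scaling $(1-r)^{-(d+2k+2\delta)/2}$), so your computation establishes the intended bound rather than the literal, stronger one, which is in fact false for $j\geq 1$.
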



%
%

In order to verify appropriate version of \eqref{cond:C} we firstly estimate differences of the derivatives of $\jRop(\te,\va)$. We remark that in order to prove the below-stated proposition, one could use \cite[Lemma~3.4]{Castro&Nowak&Szarek_2016_JFAA} and the explicit form of the investigated kernels. However, Lemma \ref{lm:Jacobi_fun_diff} gives this result much quicker.

\begin{prop}
If $j\in\NN$ and $\al,\be\in\{-1/2,1/2,\ldots,j-1/2\}\cup (j-1/2,\infty)$, then
\begin{align*}
&\big\Vert \partial_\te^j \jRop(\te,\cdot)-\partial_\te^j \jRop(\te',\cdot)\big\Vert_{L^2((0,\pi))}\\
&\quad\lesssim (1-r)^{-(j+3/2)}|\te-\te'|+(1-r)^{-(\al+1)}|\te-\te'|^{\al+1/2-j}+(1-r)^{-(\be+1)}|\te-\te'|^{\be+1/2-j},
\end{align*}
uniformly in $r\in(0,1)$ and $\te,\te'\in(0,\pi)$, where the second (third, resp.) summand on the right hand side of the estimate appears only if $\al$ ($\be$, resp.) belongs to $(j-1/2,j+1/2)$. 
\end{prop}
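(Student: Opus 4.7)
The plan is to reduce the claim, via Parseval's identity, to a pointwise estimate on the differences of $(\jfunk)^{(j)}$, and then handle the latter with the single-function lemmas already established. Since $\{\jfunk\}_{k\in\NN}$ is orthonormal in $L^2((0,\pi))$, termwise differentiation of $\jRop(\te,\va)=\sum_{k\in\NN}r^k\jfunk(\te)\jfunk(\va)$ and Parseval give
\begin{equation*}
\bigl\|\partial_\te^j \jRop(\te,\cdot)-\partial_\te^j \jRop(\te',\cdot)\bigr\|_{L^2((0,\pi))}^2 = \sum_{k\in\NN} r^{2k}\bigl|(\jfunk)^{(j)}(\te)-(\jfunk)^{(j)}(\te')\bigr|^2,
\end{equation*}
where the interchange of derivative and summation is justified by Lemma \ref{lm:Jacobi_fun_Linfty_derivatives} and dominated convergence, exactly as in the proof of Lemma \ref{lm:Jacobi_fun_Rop}.

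The central step is to prove the pointwise bound
\begin{equation*}
\bigl|(\jfunk)^{(j)}(\te)-(\jfunk)^{(j)}(\te')\bigr|\lesssim (k+1)^{j+1}|\te-\te'|+\chi_\al (k+1)^{\al+1/2}|\te-\te'|^{\al+1/2-j}+\chi_\be (k+1)^{\be+1/2}|\te-\te'|^{\be+1/2-j},
\end{equation*}
uniformly in $k\in\NN$ and $\te,\te'\in(0,\pi)$, where $\chi_\al=1$ if $\al\in(j-1/2,j+1/2)$ and $0$ otherwise (similarly for $\chi_\be$). I would split on the position of $(\te,\te')$: if both lie in $(0,2\pi/3)$, invoke Lemma \ref{lm:Jacobi_fun_diff} when $\chi_\al=1$, and otherwise use the mean value theorem together with Lemma \ref{lm:Jacobi_fun_Linfty_derivatives} applied to the $(j+1)$-th derivative, whose $L^\infty((0,2\pi/3))$-norm is $\lesssim (k+1)^{j+1}$ under the standing hypothesis (after checking that $\al\in\{-1/2,\ldots,j-1/2\}\cup[j+1/2,\infty)$ falls within the range admitted by that lemma for $j+1$). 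The symmetric case $\te,\te'\in(\pi/3,\pi)$ is treated analogously with the roles of $\al$ and $\be$ swapped. In the remaining sub-case, where (say) $\te<\pi/3$ and $\te'>2\pi/3$, insert the intermediate point $\te^\ast=\pi/2$ and combine the previous two cases via the triangle inequality; since $|\te-\te^\ast|,|\te^\ast-\te'|\leq |\te-\te'|$ and the exponents $\al+1/2-j,\be+1/2-j$ are positive whenever they appear, the form of the bound is preserved.

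To conclude, I would insert the pointwise estimate into the Parseval identity and sum using the standard bound $\sum_{k\geq 0} r^{2k}(k+1)^{2s}\lesssim (1-r)^{-(2s+1)}$ with $s=j+1,\al+1/2,\be+1/2$, producing exactly the three squared summands $(1-r)^{-(2j+3)}|\te-\te'|^2$, $(1-r)^{-(2\al+2)}|\te-\te'|^{2(\al+1/2-j)}$, and $(1-r)^{-(2\be+2)}|\te-\te'|^{2(\be+1/2-j)}$ (with the last two present only in the respective sub-cases); taking square roots via $\sqrt{a+b+c}\leq \sqrt a+\sqrt b+\sqrt c$ yields the claim. The only genuinely non-mechanical step is the patching argument in the opposite-thirds case, forced by the fact that Lemmas \ref{lm:Jacobi_fun_diff} and \ref{lm:Jacobi_fun_Linfty_derivatives} are available only on the half-intervals $(0,2\pi/3)$ and $(\pi/3,\pi)$; the overlap $[\pi/3,2\pi/3]$ makes this cheap.
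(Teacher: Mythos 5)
Your proof is correct and follows essentially the same route as the paper's: split according to whether $\al$ or $\be$ lies in $(j-1/2,j+1/2)$, and combine Parseval's identity with Lemma \ref{lm:Jacobi_fun_diff} in the fractional case and with the mean value theorem plus $L^\infty$ bounds on $(j+1)$-st derivatives otherwise (the paper runs this last step at the kernel level via Lemma \ref{lm:Jacobi_fun_Rop}, which yields the slightly better power $(1-r)^{-(j+2)/2}$, though the claimed $(1-r)^{-(j+3/2)}$ is all that is needed). Your midpoint-patching argument for $\te,\te'$ in opposite thirds of $(0,\pi)$, and your explicit handling of the mixed sub-case where only one of $\al,\be$ is fractional, supply details that the paper's two-sentence proof leaves implicit; they are precisely the right way to upgrade the half-interval Lemmas \ref{lm:Jacobi_fun_diff} and \ref{lm:Jacobi_fun_Linfty_derivatives} to uniformity over all of $(0,\pi)$.
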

\begin{proof}
	In the case $\al,\be\notin (j-1/2,j+1/2)$ we simply use the mean value theorem and Lemma \ref{lm:Jacobi_fun_Rop}. On the other hand, if one or both of the parameters $\al$ and $\beta$ is in $(j-1/2,j+1/2)$, then we apply Parseval's identity and Lemma \ref{lm:Jacobi_fun_diff}.
\end{proof}

Now the following proposition follows easily (compare with Propositions \ref{prop:standard_Laguerre_cond_C} and \ref{prop:Laguerre_Hermite_cond_C}).

\begin{prop}\label{prop:Jacobi_fun_cond_C}
	If $k\in\NN$ and $\al,\be\in\big(\{-1/2,1/2,\ldots,k-1/2\}\cup (k-1/2,\infty)\big)^d$, then
	\begin{align*}
	\Big\Vert &\jRop(\te,\cdot)-\sum_{\stackrel{i_1,\ldots,i_d\geq 0}{i_1+\ldots+ i_d\leq k}} \frac{(i_1+\ldots+i_d)!}{i_1!\cdot\ldots\cdot i_d!}\partial^{i_1}_{\te_1}\ldots \partial^{i_d}_{\te_d} \jRop(\te',\cdot)\prod_{j=1}^d (\te_j-\te_j')^{i_j}\Big\Vert_{L^2((0,\pi)^d)}\\
	& \lesssim \sum_{\delta\in\Delta^{\al,\be}_k}(1-r)^{-\frac{d+2k+2\delta}{2}}|\te-\te'|^{k+\delta},
	\end{align*}		
	uniformly in $r\in(0,1)$ and $\te,\te'\in (0,\pi)^d$, where
	\begin{align*}
	\Delta^\al_k=\{1\}&\cup\{\al_i+1/2-k:\ \al_i\in(k-1/2,k+1/2) \}\\
	&\cup\{\be_i+1/2-k:\ \be_i\in(k-1/2,k+1/2) \}.
	\end{align*}
\end{prop}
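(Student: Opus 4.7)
I will follow the blueprint of Propositions \ref{prop:standard_Laguerre_cond_C} and \ref{prop:Laguerre_Hermite_cond_C}, taking advantage of the tensor product structure $\jRop(\te,\va)=\prod_{i=1}^d R_r^{\al_i,\be_i}(\te_i,\va_i)$. The multi-dimensional Taylor remainder then decomposes coordinatewise into one-dimensional factors, each of which I can control either by Lemma \ref{lm:Jacobi_fun_Rop} (pure derivative bounds) or by the preceding proposition on differences of $\partial_\te^j \jRop$ (equivalently, Parseval's identity together with Lemma \ref{lm:Jacobi_fun_diff}).

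First, suppose that for every $i$ neither $\al_i$ nor $\be_i$ belongs to $(k-1/2,k+1/2)$; by hypothesis this forces $\al_i,\be_i\in\{-1/2,\ldots,k-1/2\}\cup[k+1/2,\infty)$, so Lemma \ref{lm:Jacobi_fun_Rop} applies with $j=k+1$. Applying Taylor's theorem with Lagrange remainder of order $k+1$ in $(0,\pi)^d$, taking the $L^2$-norm, and invoking Lemma \ref{lm:Jacobi_fun_Rop} in each coordinate yields a bound of size $(1-r)^{-(d+2k+2)/2}|\te-\te'|^{k+1}$, which is exactly the $\delta=1$ contribution.

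Otherwise at least one $\al_i$ or $\be_i$ lies in the critical window $(k-1/2,k+1/2)$. In that case I apply Taylor's theorem with remainder of order $k$, so the left-hand side reduces, after fixing intermediate points $\xi_i$ between $\te_i$ and $\te_i'$, to the $L^2$-norm of
\[
\sum_{\ven=k}\frac{k!}{n_1!\cdots n_d!}\prod_{i=1}^d\Bigl(\partial_{\te_i}^{n_i}R_r^{\al_i,\be_i}(\xi_i,\va_i)-\partial_{\te_i}^{n_i}R_r^{\al_i,\be_i}(\te_i',\va_i)\Bigr)(\te_i-\te_i')^{n_i}.
\]
For each coordinate $i$, if $\al_i\in(n_i-1/2,n_i+1/2)$ or $\be_i\in(n_i-1/2,n_i+1/2)$, the preceding proposition (via Parseval and Lemma \ref{lm:Jacobi_fun_diff}) supplies the coordinate difference with a factor $|\te_i-\te_i'|^{\al_i+1/2-n_i}$ or $|\te_i-\te_i'|^{\be_i+1/2-n_i}$ and a power of $(1-r)^{-1}$ of the correct order; for the remaining coordinates I invoke the mean value theorem together with Lemma \ref{lm:Jacobi_fun_Rop} to extract $|\te_i-\te_i'|$ at the cost of $(1-r)^{-(n_i+3/2)}$. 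Taking the product across $i$ and using $|\te_i-\te_i'|\le|\te-\te'|\le 1/2$, the combined exponents collapse to precisely the $(1-r)^{-(d+2k+2\delta)/2}|\te-\te'|^{k+\delta}$ terms indexed by $\delta\in\Delta^{\al,\be}_k$.

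The main obstacle is the combinatorial redistribution in the second case: for a multi-index $n$ with $\ven=k$, only those coordinates whose parameter sits in a critical window and whose order $n_i$ matches that window contribute a genuine fractional Hölder exponent $\delta<1$, while all the other coordinates must be treated by the mean value theorem and pay a full factor $|\te_i-\te_i'|$. Verifying that no assignment produces a power of $(1-r)$ larger than $(d+2k+2\delta)/2$ is the same accounting already carried out in Proposition \ref{prop:standard_Laguerre_cond_C}; nothing genuinely new is required once the standard Laguerre estimates are replaced by their Jacobi analogues.
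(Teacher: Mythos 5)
Your proposal is correct and follows essentially the same route as the paper: the paper proves this proposition by exactly the two-case Taylor argument of Propositions \ref{prop:standard_Laguerre_cond_C} and \ref{prop:Laguerre_Hermite_cond_C} (order-$(k+1)$ remainder with Lemma \ref{lm:Jacobi_fun_Rop} when no parameter is in the critical window, otherwise order-$k$ remainder with the difference proposition, i.e.\ Parseval plus Lemma \ref{lm:Jacobi_fun_diff}, for critical coordinates and the mean value theorem plus Lemma \ref{lm:Jacobi_fun_Rop} for the rest). Your exponent accounting closes correctly, so no gap.
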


Now we are ready to state Hardy's inequality associated with the Jacobi trigonometric functions.

\begin{thm}\label{thm:main_Jacobi_fun}
	Let $p\in(0,1)$, $s\in[p,2]$, and $P=\lfloor d(p^{-1}-1)\rfloor$. For
	\begin{equation*}
	\al,\be\in\big(\{-1/2,1/2,\ldots,P-1/2\}\cup (d(p^{-1}-1)-1/2,\infty)\big)^d,
	\end{equation*}
	there holds
	\begin{equation*}
	\sum_{n\in\NN^d}\frac{|\langle f,\jfun\rangle|^s}{(\ven+1)^E}\lesssim \Vert f\Vert^s_{H^p((0,\pi)^d)}, \qquad f\in H^p((0,\pi)^d),
	\end{equation*}
	where $E= d+sd\big(p^{-1}-1\big)$, and the exponent is sharp.
\end{thm}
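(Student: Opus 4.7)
The plan is to follow exactly the template of Theorems \ref{thm:main_Laguerre_standard}, \ref{thm:main_Laguerre_Hermite}, and \ref{thm:main_generalized_Hermite}. The inequality itself will be obtained by invoking Theorem \ref{thm:Hp_general}, whose hypotheses have essentially already been verified for the Jacobi setting: Lemma \ref{lm:Jacobi_fun_Lipschitz} shows that $\jfun\in\Lambda_{d(\frac{1}{p}-1)}((0,\pi)^d)$ under the stated restrictions on $\al,\be$, while Proposition \ref{prop:Jacobi_fun_cond_C} delivers condition \eqref{cond:C} with $\gamma=1/2$ and the set $\Delta^{\al,\be}_k$ for the choice $k=P=\lfloor d(p^{-1}-1)\rfloor$. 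I would just remark that each $\delta\in\Delta^{\al,\be}_P$ is strictly greater than $d(p^{-1}-1)-P$: for $\delta=1$ this is immediate, and for $\delta=\al_i+1/2-P$ or $\delta=\be_i+1/2-P$ this follows from the assumption $\al_i,\be_i>d(p^{-1}-1)-1/2$ (the boundary case $\al_i+1/2=d(p^{-1}-1)$ being excluded for noninteger $d(p^{-1}-1)$, exactly as in Theorem \ref{thm:main_Laguerre_standard} and in view of Remark \ref{rem:sharpness}). Plugging $\gamma=1/2$ into formula \eqref{Exponent_formula} gives
\begin{equation*}
E=\frac{(2-p)sd}{2p}+\frac{(2-s)d}{2}=d+sd(p^{-1}-1),
\end{equation*}
as claimed.

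For sharpness I would apply Propositions \ref{prop:general_sharpness} and \ref{prop:general_sharpness_2} component-wise together with Remark \ref{rem:sharpness_multidim}, the latter being applicable because the basis is a tensor product. The threshold from Proposition \ref{prop:general_sharpness} is $\tau>(4\gamma-2p\gamma-p)/(2p)=(1-p)/p=p^{-1}-1$. Lemma \ref{lm:Jacobi_fun_sharp_estimates} with $j=\ell=0$ (or equivalently the known two-sided version of \eqref{eq:27}, see also the proof of Theorem \ref{thm:main_Laguerre_standard}) yields
\begin{equation*}
m(k+1)^{\al+1/2}\te^{\al+1/2}\le |\jfunk(\te)|\le M(k+1)^{\al+1/2}\te^{\al+1/2},\qquad \te\in(0,c(k+1)^{-1}),
\end{equation*}
so \eqref{eq:17} holds with $\tau=\al+1/2$. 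Hence whenever some $\al_i>d(p^{-1}-1)-1/2$ satisfies $\al_i+1/2>p^{-1}-1$, which covers in particular all real $\al_i$ in the continuous part $(d(p^{-1}-1)-1/2,\infty)$, Proposition \ref{prop:general_sharpness} applies. For the remaining cases where $\al_i+1/2\in\{-1/2,1/2,\ldots,P-1/2\}$ (so that $\tau=\al_i+1/2$ may fail the strict inequality), Lemma \ref{lm:Jacobi_fun_sharp_estimates} taken with $\ell=\al_i+1/2$ and $j=P+1$ provides exactly the two-sided bound for $(\jfunk)^{(P+1)}$ required by hypothesis \eqref{eq:24} of Proposition \ref{prop:general_sharpness_2}, with the same $\tau=\al_i+1/2$, and yields sharpness. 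By the symmetry relation \eqref{eq:30} the analogous argument works near the endpoint $\pi$ using $\be$ instead of $\al$.

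Finally, one ought to observe that the reasoning transfers verbatim to $d\geq 1$ via Remark \ref{rem:sharpness_multidim} by taking $A_K(\te)=\prod_{i=1}^d a_K(\te_i)$ (with $a_K$ localized near $0$ or, by \eqref{eq:30}, near $\pi$), because the required lower bound on $|\langle a_K,\funk\rangle|$ is supplied in each coordinate independently. The only genuinely new ingredients compared to the earlier theorems are the already-proved Jacobi-specific Lemmas \ref{lm:Jacobi_fun_Linfty_derivatives}, \ref{lm:Jacobi_fun_diff}, \ref{lm:Jacobi_fun_sharp_estimates} and \ref{lm:Jacobi_fun_Lipschitz} together with Proposition \ref{prop:Jacobi_fun_cond_C}; no delicate step remains, the potential obstacle being merely bookkeeping of the excluded endpoint case $\al_i+1/2=d(p^{-1}-1)$ (or the analogous one for $\be_i$) when $d(p^{-1}-1)\notin\NN$, which is ruled out by Remark \ref{rem:sharpness} just as in the standard Laguerre case.
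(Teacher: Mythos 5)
Your proposal reproduces the paper's proof almost verbatim: the inequality is obtained from Theorem \ref{thm:Hp_general} together with Lemma \ref{lm:Jacobi_fun_Lipschitz} and Proposition \ref{prop:Jacobi_fun_cond_C} (correctly read off with $\gamma=1/2$, giving $E=d+sd(p^{-1}-1)$), and sharpness from Propositions \ref{prop:general_sharpness}, \ref{prop:general_sharpness_2}, Lemma \ref{lm:Jacobi_fun_sharp_estimates}, and Remarks \ref{rem:sharpness} and \ref{rem:sharpness_multidim}; this is exactly the citation chain the paper uses.

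There is, however, one misstated step in the sharpness part. For $\al_i\in\{-1/2,1/2,\ldots,P-1/2\}$ you claim that Lemma \ref{lm:Jacobi_fun_sharp_estimates} with $\ell=\al_i+1/2$ and $j=P+1$ verifies hypothesis \eqref{eq:24} of Proposition \ref{prop:general_sharpness_2} \emph{with the same} $\tau=\al_i+1/2$. Read literally this is self-defeating: Proposition \ref{prop:general_sharpness_2} requires $\tau>(4\gamma-2p\gamma-p)/(2p)=p^{-1}-1$, and (in the one-dimensional normalization) $\al_i+1/2\leq P\leq p^{-1}-1$ — the failure of this very inequality is why you abandoned Proposition \ref{prop:general_sharpness} for these $\al_i$ in the first place. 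Moreover the lemma does not produce that bound. Writing $q=P+1-(\al_i+1/2)\geq 1$, the case $\ell\leq j$ of Lemma \ref{lm:Jacobi_fun_sharp_estimates} gives, on $(0,c(k+1)^{-1})$,
\begin{equation*}
\big|(\jfunk)^{(P+1)}(\te)\big|\simeq
\left\{
\begin{array}{ll}
(k+1)^{P+1}, & q \ \text{even},\\
(k+1)^{P+2}\,\te, & q \ \text{odd},
\end{array}
\right.
\end{equation*}
and comparing with \eqref{eq:24}, whose $u$-exponent equals $\tau-(P+1)$ when $\gamma=1/2$, this is precisely \eqref{eq:24} with $\tau=P+1$ (even $q$) or $\tau=P+2$ (odd $q$). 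Both of these values do satisfy the threshold $\tau>p^{-1}-1$, so Proposition \ref{prop:general_sharpness_2} applies and sharpness follows. Thus the flaw is a mislabeling of $\tau$ rather than a structural gap; once corrected, your argument coincides with the paper's proof (the paper sidesteps the issue by citing the lemma and proposition without naming $\tau$, exactly as in its standard Laguerre case, where $\ell=\al/2$, $j=P+1$ yields \eqref{eq:24} with $\tau=P+1$).
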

\begin{proof}
	Similarly as in the proofs of Theorems \ref{thm:main_Laguerre_standard} and \ref{thm:main_Laguerre_Hermite} the inequality follows from Theorem \ref{thm:Hp_general} and Proposition \ref{prop:Jacobi_fun_cond_C}, whereas sharpness is a consequence of Propositions \ref{prop:general_sharpness}, \ref{prop:general_sharpness_2} and Lemma \ref{lm:Jacobi_fun_sharp_estimates}. As in the previous sections, we exclude the cases $\al_i+1/2,\be_i+1/2=d(p^{-1}-1)$, unless $d(p^{-1}-1)$ is an integer, see Remark \ref{rem:sharpness}. 
\end{proof}

\bibliographystyle{acm}
\bibliography{Plewa_Hardy_ineq_on_Hp}

\end{document}